\renewcommand{\mathcal}{\mathscr}
\def\R {\mathbb{R}}
\def\N {\mathbb{N}}
\def\Z {\mathbb{Z}}
\renewcommand{\epsilon}{\varepsilon}
\newcommand{\e}{\varepsilon}
\renewcommand{\leq}{\leqslant}
\renewcommand{\le}{\leqslant}
\renewcommand{\geq}{\geqslant}
\renewcommand{\ge}{\geqslant}
\DeclareMathOperator*{\osc}{osc}
\newcommand{\Per}{\mathrm{Per}}
\newtheorem{proposition}{Proposition}[section]
\newtheorem{theorem}[proposition]{Theorem}
\newtheorem*{theorem*}{Theorem}
\newtheorem{lemma}[proposition]{Lemma}
\theoremstyle{definition}
\newtheorem{definition}[proposition]{Definition}
\newtheorem{remark}[proposition]{Remark}
\numberwithin{equation}{section}
\begin{document}

\title{Minimizers for nonlocal perimeters of Minkowski type}\thanks{This work has been supported by the Andrew Sisson Fund 2017.\\
{\em Annalisa Cesaroni}:
Dipartimento di Scienze Statistiche,
Universit\`a di Padova, Via Battisti 241/243, 35121 Padova, Italy. {\tt annalisa.cesaroni@unipd.it}
\\
{\em Serena Dipierro}:
Dipartimento di Matematica, Universit\`a di Milano,
Via Saldini 50, 20133 Milan, Italy.
{\tt serena.dipierro@unimi.it}\\
{\em Matteo Novaga}: Dipartimento di Matematica,
Universit\`a di Pisa, 
Largo Pontecorvo 5, 56127 Pisa,
Italy. {\tt matteo.novaga@unipi.it}\\
{\em Enrico Valdinoci}:
School of Mathematics
and Statistics,
University of Melbourne, 813 Swanston St, Parkville VIC 3010, Australia,
Dipartimento di Matematica, Universit\`a di Milano,
Via Saldini 50, 20133 Milan, Italy, and IMATI-CNR, Via Ferrata 1, 27100 Pavia,
Italy. {\tt enrico.valdinoci@unimi.it}}
\author[A. Cesaroni]{Annalisa Cesaroni}
\author[S. Dipierro]{Serena Dipierro}
\author[M. Novaga]{Matteo Novaga}
\author[E. Valdinoci]{Enrico Valdinoci}

\subjclass[2010]{49Q05, 49N60.}
\keywords{Nonlocal perimeters, Dirichlet forms, planelike minimizers.}

\begin{abstract}
We study a nonlocal perimeter functional
inspired by the Minkowski content, whose main feature is that it
interpolates between the classical perimeter and
the volume functional. 

This nonlocal functionals  arise in concrete applications,
since
the nonlocal character of the problems and the
different behaviors of the energy  at different scales
allow the preservation of
details and irregularities 
of the image in the process of removing white noises,
thus improving the quality of the image without losing relevant 
features.

In this paper, we provide a series of results
concerning existence, rigidity and classification of minimizers,
compactness results, isoperimetric inequalities,
Poincar\'e-Wirtinger inequalities and density estimates.
Furthermore, we provide the construction of planelike minimizers
for this generalized perimeter under a small and periodic
volume perturbation.
\end{abstract}

\maketitle

\tableofcontents

\section{Introduction}

The main goal of this paper is to study a class of variational problems
which interpolate between the classical perimeter and
the volume functionals. 
Generalized energies of this type have been analyzed  in~\cite{MR3023439, MR3401008, cn, dnv}, 
and also in anisotropic contexts and in view of discretizations methods
in~\cites{MR2655948, MR3187918}.
In terms of applications,
nonlocal functionals
interpolating between perimeter and volume
are often used in image processing
to keep fine details and irregularities 
of the image while denoising additive white noises,
see e.g.~\cites{MR2520779, MR2728706}.

These objects are modeled by
an energy which, at large scales, resembles the perimeter
using an approximation based on the Minkowski content,
but on small scales they present predominant volume contributions,
giving rise to a sort of nonlocal behavior, which may produce severe losses of regularity and compactness properties. 
We will call   {\it $r$-perimeters} these nonlocal perimeter functionals.

We recall that in  recent years 
a lot of attention has been devoted to the analysis of other type of nonlocal perimeter functionals, 
starting from the seminal work of Caffarelli, Roquejoffre  and Savin \cite{MR2675483}, 
where it was initiated the study of the Plateau  problem for such kind of nonlocal  perimeters.  
A regularity theory for minimizers of such perimeters has been developed in analogy to the regularity theory of classical minimal surfaces, 
 and also the geometric and variational relation with the classical perimeter has been investigated. For a general overview on the subject we refer to 
\cite{MR3046979} and references therein.

\medskip

In this paper, we develop a preliminary study of the main properties of  the
 $r$-perimeters. First of all we analyze the main features of sets with finite $r$-perimeter, in particular compactness properties, 
to get existence for the Dirichlet problem, and  then isoperimetric inequalities. The  global isoperimetric inequality is a direct consequence of   
 the Brunn-Minkowski inequality, whereas its local version is valid at the appropriate scale. 

We show some rigidity results for minimizers of the $r$-perimeter, in dimension 2, and we 
presents some properties of minimizers.  In particular we consider their density properties, 
pointing out an interesting phenomenon not appearing in the classical case. Indeed in the density estimates  two scales of growth appear:
if the initial density is below a given  threshold depending on $r$, then there is an exponential density growth, then, 
over the threshold,  the growth reduces to the usual one, that is the radius to the power $n$.  

An important feature of these results is that they always need to capture
the ``local'' behavior of the minimizers, which can be rather different than the
``global'' one, due to nonlocal effects at small scales.
In addition, these problems are not scale invariant and they do
not possess any associated extended problem of local type, therefore
many classical techniques related to scaled iterations and monotonicity
formulas are not easily applicable in our setting. 
In particular, we show with a concrete example (see Theorem \ref{FAIL:COM})
that compactness and regularity properties can  fail, at a small scale, for minimizers
of the $r$-perimeter with the addition of a sufficiently large volume term.

Finally the last section is devoted to the construction of plane-like minimizers for the $r$-perimeters in a periodic medium.  
A classical problem in different fields, including geometry,
dynamical systems and partial differential equations,
consists in the determination of objects that are embedded into
a periodic medium and present bounded oscillations with respect to
a reference hyperplane. These objects are somehow the natural extension of ``flat''
objects such as hyperplanes and linear functions and have the
important property that, for these solutions,
the forcing term produced by the lack of homogeneity
of the medium ``averages out'' at a large scale.
We refer to~\cite{MR1501263, MR1503086} for the first results
of this type on geodesics, to~\cite{MR847308} 
for the introduction of this setting in the case of elliptic integrands,
to~\cite{MR1852978, MR2197072, ct}
for the case of hypersurfaces with prescribed mean curvature,
to~\cite{MR2099113, MR845785}
for the case of partial differential equations
and to~\cite{MR2123651, COZZI}
for problems
related to statistical mechanics. 
The planelike structures are also useful to construct pinning effects
and localized bump solutions, see e.g.~\cite{MR2777010, MR2745195}.
See also~\cite{COZZI2}
for planelike constructions related to nonlocal problems of fractional type
and~\cite{2016arXiv160503794C}
for a general review.

We also address the problem of existence of planelike minimizers for energy functionals in which the $r$-perimeter in (1.2) is modulated by a volume term which is periodic and with a sufficiently small size.
This is a setting not comprised in the existing literature, since, as far as we know,
the only nonlocal cases taken into account are the ones arising
from fractional minimal surfaces or related to the Ising model.

\medskip

In the rest of this section, we formalize the mathematical setting in which
we work and we present our main results.
The nonlocal perimeter
functional based on Minkowski content 
will be introduced in
Subsection~\ref{A nonlocal perimeter functional}.
Some rigidity properties of minimizers will be also discussed.

In Subsections~\ref{Compactness properties}
we present some 
compactness
results at large scales and 
some $\Gamma$-convergence results for this nonlocal perimeter.
%
Then, in Subsection~\ref{diri:sec} we discuss the Dirichlet problem
and in Subsection~\ref{sec:class}
we present some rigidity results.

In Subsection~\ref{Isoperimetric SE} 
we introduce global and relative
isoperimetric inequalities. Furthermore, we provide
density estimates for the nonlocal perimeter,
which in turn show that the compactness 
and regularity properties of the nonlocal perimeter
minimizers may be deeply influenced by oscillations at small scales.

Finally, the planelike minimizers for the nonlocal 
perimeter are discussed in
Subsection~\ref{Existence of planelike}.

A detailed organization of the paper is then presented
at the end of the Introduction, in Subsection~\ref{sec:org}.

\subsection{The nonlocal perimeter and the corresponding Dirichlet energy}
\label{A nonlocal perimeter functional}

We start with some preliminary definitions.
Given~$r>0$ and~$E\subseteq\R^n$, we let
\begin{equation}\label{omega r}\begin{split} & E\oplus B_r:=
\bigcup_{x\in E} B_r(x)=(\partial E\oplus B_r)\cup E
=(\partial E\oplus B_r)\cup(E\ominus B_r),\\
{\mbox{where}}\qquad  &
E\ominus B_r:=
E\setminus \left(\bigcup_{x\in\partial E} B_r(x)\right)=E\setminus\big(
(\partial E)\oplus B_r\big).\end{split}\end{equation}
We shall identify a set~$E\subseteq \R^n$ with its points of density one and 
$\partial E$ with the topological boundary of the set of points of density one. 

Given~$r>0$ and a domain $\Omega\subseteq\R^n$, for any measurable set~$E\subseteq\R^n$, we use the notation in~\eqref{omega r}
and we consider the functional
\begin{equation} \label{per r}
\Per_r(E,\Omega):=\frac{1}{2r}{\mathcal{L}}^n \Big(
\big((\partial E)\oplus B_r\big) 
\cap\Omega\Big)=\frac{1}{2r}
{\mathcal{L}}^n \big(( \partial E)_r\cap\Omega\big).\end{equation}
As customary, we denoted here by~${\mathcal{L}}^n$ the $n$-dimensional
Lebesgue measure. 
When~$\Omega=\R^n$, we simply write~$\Per_r(E):=\Per_r(E,\R^n)$.
Note that our definition agrees with that in \cite{MR2655948, MR3187918}, since we are identifying a set with its points of density one,
therefore 
$$\Per_r(E,\Omega)= \min_{|E'\Delta E|=0} \Per_r(E', \Omega).$$ 
We observe (see e.g. ~\cite{MR3023439}) that $\Per_r$ is weak  lower semicontinuous in $L^1_{\rm{loc}}$ 
and that, for every $A, B$ measurable sets, 
$$
\Per_r(A\cap B, \Omega)+\Per_r(A\cup B, \Omega)\leq 
\Per_r(A, \Omega)+\Per_r(B, \Omega).$$

The definition of~$\Per_r$ is inspired by
the classical Minkowski content (which would be recovered in
the limit, see e.g.~\cite{federer, MR2655948, MR3187918}).
In particular, for 
sets with compact and $(n-1)$-rectifiable boundaries, the functional in~\eqref{per r}
may be seen as a nonlocal approximation of the classical perimeter functional,
in the sense that
\begin{equation*} \lim_{r\searrow0}  \Per_r(E)=
{\mathcal{H}}^{n-1} (\partial E).\end{equation*}
Hence, in 
some sense, $\Per_r$ recovers a perimeter functional for small~$r$
and a volume energy for large~$r$.

We observe that recently a great attention has been devoted
to the fractional perimeters
introduced in~\cite{MR2675483}, which also interpolate
the classical perimeter with an area type functional (see e.g.~\cite{GP}
for a review on such topic).
The functional in~\eqref{per r} is however
very different in spirit from that in~\cite{MR2675483},
since the lack of scaling invariance does not
allow a classical regularity theory 
and causes severe lack of compactness
at small scales (as we will discuss in details in the sequel).

More generally,  given~ a domain~$\Omega\subseteq\R^n$ and a function  $g\in  L^1_{\rm loc}(\R^n)$, 
we define the energy 
\begin{equation}\label{per rg} {\mathcal{F}}_{r,g} (E,\Omega):=\Per_r(E,\Omega)+\int_{E\cap\Omega} g(x)\,dx.\end{equation} 


The functional in~\eqref{per r} is related via a coarea formula
to a Dirichlet energy which takes into account the
local oscillation of a function, which is described as follows. 
Let $\Omega\subseteq \R^n$ be a  domain  and $u\in L^1_{\rm loc}(\Omega\oplus B_r)$. Then for any~$x\in\Omega$ we 
consider the oscillation of~$u$ in~$B_r(x)$,
given by
$$ \osc_{B_r(x)} u:= \sup_{B_r(x)} u-\inf_{B_r(x)} u.$$
In this paper, in the $\sup$ and $\inf$ notation, we mean
the ``essential supremum and infimum'' of the function (i.e.,
sets of null measure are neglected).
It can be checked by using the definition that a triangular inequality holds: for all $v,u\in  L^1_{\rm loc}(\Omega\oplus B_r)$  
$$ \osc_{B_r(x)} (u+v)\leq \osc_{B_r(x)} (u)+\osc_{B_r(x)} (v).$$
We have the  following generalized coarea formula
(see formulas~(4.3) and~(5.7)
in~\cite{MR3401008} for similar formulas in very related contexts):

\begin{lemma}\label{COAREA:L}
It holds that
\begin{equation}\label{COAREA}
\int_\Omega \osc_{B_r(x)} u\,dx=2r\,\int_{-\infty}^{+\infty}
\Per_{r}(\{u>s\}, \Omega) \,ds.\end{equation}
\end{lemma}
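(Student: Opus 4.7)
The plan is to identify both sides as double integrals and then exchange the order of integration via Fubini. For each $s\in\R$ set $E_s:=\{u>s\}$, identified with its points of density one (per the paper's convention). The starting point is a pointwise characterization, valid for a.e. $s\in\R$, of the set $\big((\partial E_s)\oplus B_r\big)\cap\Omega$ that enters the definition \eqref{per r} of $\Per_r(E_s,\Omega)$.

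To get this characterization I would use the identity $(\partial E_s)\oplus B_r = (E_s\oplus B_r)\setminus (E_s\ominus B_r)$, which is immediate from \eqref{omega r}. Recalling that $\sup$ and $\inf$ denote essential supremum and infimum, the density-one convention gives that $x\in E_s\oplus B_r$ iff $B_r(x)\cap E_s$ has positive measure, iff $\sup_{B_r(x)} u>s$; and that $x\in E_s\ominus B_r$ iff $B_r(x)\subseteq E_s$ modulo null sets, iff $\inf_{B_r(x)} u\ge s$ (with strict inequality, up to a Lebesgue-null set of $(x,s)$). Consequently, up to a Lebesgue-null subset of $\Omega\times\R$,
$$\chi_{((\partial E_s)\oplus B_r)\cap\Omega}(x)\;=\;\chi_\Omega(x)\,\chi_{\{\inf_{B_r(x)} u\,\le\,s\,<\,\sup_{B_r(x)} u\}}.$$

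Plugging this into \eqref{per r}, integrating in $s$, and swapping orders by Fubini (all integrands are nonnegative and measurable), I obtain
\begin{align*}
2r\int_{-\infty}^{+\infty}\Per_r(\{u>s\},\Omega)\,ds
&=\int_{-\infty}^{+\infty}\mathcal{L}^n\big(((\partial E_s)\oplus B_r)\cap\Omega\big)\,ds\\
&=\int_\Omega\int_{-\infty}^{+\infty}\chi_{\{\inf_{B_r(x)} u\,\le\, s\,<\,\sup_{B_r(x)} u\}}\,ds\,dx\\
&=\int_\Omega\Big(\sup_{B_r(x)} u-\inf_{B_r(x)} u\Big)dx=\int_\Omega\osc_{B_r(x)} u\,dx,
\end{align*}
which is \eqref{COAREA}.

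The only genuine subtlety is the pointwise identification of $(\partial E_s)\oplus B_r$: one has to reconcile the topological boundary $\partial E_s$ with the essential sup/inf of $u$. The potentially ambiguous configurations, where $\sup_{B_r(x)}u=s$ or $\inf_{B_r(x)}u=s$, or the level set $\{u=s\}$ has positive $n$-measure, occur only on a Lebesgue-null subset of $\Omega\times\R$ (by Fubini applied to $\int_\R\mathcal{L}^n(\{u=s\})\,ds=0$, together with the monotonicity in $s$ of the two functions $\inf_{B_r(x)}u-s$ and $\sup_{B_r(x)}u-s$), and therefore contribute nothing to the double integral. Once this measure-theoretic cleanup is in place, the rest of the argument is a routine application of Fubini's theorem.
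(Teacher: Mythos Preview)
Your argument is correct: the identification $(\partial E_s)\oplus B_r=(E_s\oplus B_r)\setminus(E_s\ominus B_r)$ together with the essential-sup/inf characterizations reduces the right-hand side to $\int_\Omega\int_\R \chi_{\{\inf_{B_r(x)}u\le s<\sup_{B_r(x)}u\}}\,ds\,dx$, and Fubini finishes. The measure-theoretic cleanup you describe (level sets $\{u=s\}$ of positive measure and the borderline cases $\sup_{B_r(x)}u=s$ or $\inf_{B_r(x)}u=s$) is handled exactly as you say, since for each fixed $x$ these exceptional values of $s$ form an at most countable set.

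As for comparison: the paper does not actually supply a proof of this lemma. It states the formula and points to formulas~(4.3) and~(5.7) in~\cite{MR3401008} for analogous identities in closely related settings. Your Fubini-based derivation is the natural and standard way to prove such generalized coarea formulas, and is in the same spirit as the arguments in the cited references, so there is no divergence of approach to discuss.
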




In the setting of~\eqref{per r} and~\eqref{per rg},  we introduce the  definition of local minimizer and Class~A minimizer. 
We are interested in existence, compactness and regularity properties of such minimizers. Moreover we will also provide construction of planelike minimizers for such energies in periodic media. 

\begin{definition}[Local minimizer and Class~A minimizer]
\label{DEFIN4}
A set~$E$ is a  minimizer for  ${\Per_r}$ (resp. for ${\mathcal{F}}_{r,g}$) in a bounded domain~$\Omega$
if for any measurable set~$F\subseteq\R^n$
with~$F\setminus(\Omega\ominus B_r)=E\setminus(\Omega\ominus B_r)$ it holds that
$$\Per_r(E,\Omega)\le\Per_r(F,\Omega)\qquad (\text{resp. }  {\mathcal{F}}_{r,g}(E, \Omega)\leq {\mathcal{F}}_{r,g}(F, \Omega)).$$
$E$ is
a  Class~A minimizer if it is a minimizer in this sense in any ball of~$\R^n$.
\end{definition} 

We observe that if~$E$ is a Class~A minimizer, then
$$ \Per_r(E, B_R)\le n\omega_n R^{n-1}, \quad {\mbox{ if }} R\ge 2r.$$
Indeed, 
$$ \Per_r(E, B_R)\le \Per_r(E\setminus B_{R-r}, B_R)\le
\Per_r(B_{R-r}, B_R)= \frac{\omega_n}{2r}\left( R^n-(R-2r)^n\right).$$

Note that in Definition~\ref{DEFIN4}
we allow competitors only away from the boundary of the domain, in a way compatible with the natural
scale of the problem. Actually, 
this is the appropriate notion of minimizer, 
since the following result shows that the problem trivializes if competitors
are allowed to produce modifications up to the boundary of the domain. 
%

\begin{proposition}\label{TRIVIA}
Let~$E\subseteq\R^n$ be such that for every ball $B\subseteq \R^n$, and for any measurable set~$F\subseteq\R^n$
with~$F\setminus B=E\setminus B$ it holds that
\[ \Per_r(E,B)\le\Per_r(F,B).\]
 Then either~$E=\varnothing$ or~$E=\R^n$.
\end{proposition}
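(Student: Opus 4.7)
The plan is a proof by contradiction: I suppose $E\ne\varnothing$ and $E\ne\R^n$, so that $\partial E\ne\varnothing$, and aim to exhibit a ball $B$ and a competitor $F$ violating the minimality assumption.

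First I dispose of the bounded cases. If $E$ is bounded, choose $R$ so large that $E\cup\bigl((\partial E)\oplus B_r\bigr)\subseteq B_R(0)$; then $F=\varnothing$ is admissible on $B_R(0)$ (since both sides agree outside), with $\Per_r(F,B_R(0))=0$, whereas $\Per_r(E,B_R(0))=\Per_r(E)>0$ because $\partial E\ne\varnothing$, giving the required contradiction. When $E^c$ is bounded, the symmetric choice $F=\R^n$ works.

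So I may assume both $E$ and $E^c$ are unbounded. Pick any $p\in\partial E$ and set $B:=B_r(p)$. Every point of $B$ lies within distance $r$ of $p\in\partial E$, so $B\subseteq(\partial E)\oplus B_r$, whence
\[
\Per_r(E,B)=\frac{|B|}{2r}=\frac{\omega_n r^{n-1}}{2}.
\]
I then compare with the two candidate competitors $F_+:=E\cup B$ and $F_-:=E\setminus B$. Since $B$ is open, $\partial F_\pm\cap B=\varnothing$, and hence $\partial F_\pm\subseteq\partial B\cup(\partial E\setminus\overline B)$. Writing $d_E^+(x)$ for the outer (one-sided) density of $E$ at $x\in\partial B$, a direct inspection of densities yields $\partial F_+\cap\partial B=\{d_E^+<1\}$ and $\partial F_-\cap\partial B=\{d_E^+>0\}$. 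By the Lebesgue density theorem applied on $\partial B$, $d_E^+\in\{0,1\}$ for $\mathcal{H}^{n-1}$-a.e.\ $x\in\partial B$; hence one of the sets $\{d_E^+=0\}$, $\{d_E^+=1\}$ has positive $(n-1)$-measure on $\partial B$. I pick the corresponding competitor $F\in\{F_+,F_-\}$, so that $\partial F$ misses an open subset $V\subseteq\partial B$ of positive surface measure, on whose outer side $E$ (respectively $E^c$) has density one.

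It will then suffice to produce a set $G\subseteq B$ of positive Lebesgue measure with $\mathrm{dist}(G,\partial F)\ge r$, for then
\[
\Per_r(F,B)\le\frac{|B|-|G|}{2r}<\frac{\omega_n r^{n-1}}{2}=\Per_r(E,B),
\]
contradicting the strong minimality hypothesis. I plan to build $G$ at an $(n-1)$-density point $v\in V$ as a thin inward cone inside $B$ pointing toward $v$; mimicking the model half-space calculation, elementary spherical geometry forces $\mathrm{dist}(G,\partial B\setminus V)\ge r$, while the density-one condition on the outer side of $V$ at $v$ should yield $\mathrm{dist}(G,\partial E\setminus\overline B)\ge r$. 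The hardest step is precisely this last geometric verification---controlling the distance from $G$ to the possibly irregular set $\partial E\setminus\overline B$ via the density-one condition---which is the place that most critically exploits the fine density structure of $E$ near a density point of the good cap $V$.
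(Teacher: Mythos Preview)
Your argument has a genuine gap at precisely the step you flag as ``hardest'', and in fact that step is not merely hard but false in general. The outer density condition $d_E^+(v)=1$ controls the \emph{measure} of $E^c$ in small outer half-balls at $v$, but it does not prevent $\partial E\setminus\overline B$ from accumulating at $v$. Concretely, take $n=2$, $B=B_r(0)$, and let $E$ outside $B$ be a pinwheel of $2N$ alternating sectors of $E$ and $E^c$ with $N\ge2$. Then both $\{d_E^+=0\}$ and $\{d_E^+=1\}$ have half the measure of $\partial B$, yet for either competitor $F\in\{F_+,F_-\}$ the radial sector edges belong to $\partial E\setminus\overline B$ and sit at angular distance $\pi/(2N)$ from every point of the ``good'' arcs. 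An elementary computation (using $|x-y|^2=|x|^2+1-2x\cdot y$ for $y\in\partial B_r$) shows that the set $\{x\in B:\ \mathrm{dist}(x,\partial F)\ge r\}$ reduces to $\{0\}$, so $|G|=0$ and $\Per_r(F,B)=\Per_r(E,B)$; no contradiction arises. A related issue appears earlier: you assert that $\partial F$ misses an \emph{open} subset $V\subseteq\partial B$, but the density argument only produces a measurable set of positive $\mathcal{H}^{n-1}$-measure, which need not contain any open arc.

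The paper's proof avoids this obstruction by a two-step strategy that you are missing. First, a packing argument shows that \emph{some} ball of radius $r/\varepsilon$ (for a fixed dimensional $\varepsilon>0$) lies entirely in $E$ or entirely in $E^c$: otherwise every such ball contains a boundary point, and counting disjoint $r$-balls around these points inside a large $B_R$ forces $\Per_r(E,B_R)\gtrsim R^n/r$, contradicting the competitor $E\cup B_{R-r}$ which gives $\Per_r\lesssim R^{n-1}$. Second, once a large clean ball is available, one slides it until it touches $\partial E$ and uses a fixed geometric lemma (that the $r$-neighbourhood of the complement of a large ball tangent at the origin, intersected with the complement of $B_{(1-\varepsilon)r}$, misses a definite cap of $B_r$) to build a competitor with strictly smaller $\Per_r$ in $B_r$. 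The clean ball is exactly what gives control over $\partial E$ near the contact point, which your direct attack at an arbitrary $p\in\partial E$ cannot supply.
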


\subsection{$\Gamma$-convergence results and compactness properties}\label{Compactness properties}
We start with some convergence results on $\Per_{r_k}$  as $r_k\to r$. We focus also on compactness properties
of sets with bounded energy.
 
\begin{theorem}\label{SICOM:01} 
Let~$\Omega$ be either an open and bounded subset of~$\R^n$,
or equal to~$\R^n$.
Let also $r_k\to r\in (0,+\infty)$.
Then the following holds.
\begin{enumerate} 
\item $\Per_{r_k}(E, \Omega) \to \Per_{r}(E, \Omega)$ and $\mathcal{F}_{r_k,g}(E, \Omega) \to \mathcal{F}_{r,g}(E, \Omega)$ for all $E\subseteq \R^n$. 
\item $\Per_{r_k}(\cdot, \Omega)$ (resp. $\mathcal{F}_{r_k,g}(\cdot, \Omega)$) $\Gamma$-converges in $L^1_{\rm{loc}}(\Omega)$ to 
$\Per_{r}(\cdot, \Omega)$ (resp. to $\mathcal{F}_{r,g}(\cdot, \Omega)$).
\item   Let $E_k\subseteq\R^n$ be such that 
$$\sup_{k\in\N} \Per_{r_k}(E_k,\Omega)<+\infty.$$
Assume that, up to subsequences, 
$\chi_{E_k}\rightharpoonup u$, in $L^1_{\rm{loc}}\big(\Omega)$
with $u:\R^n\to [0,1]$.  Let $\Sigma:= \{x\in\Omega: u(x)\in (0,1)\}$ and 
\begin{equation} \label{ELL2} \ell:=\liminf_{r_k\to r} \Per_{r_k}(E_k,\Omega).\end{equation}\\
Then the following holds true:
\begin{equation}\label{ZZ-1}
{\mathcal{L}}^n(\Sigma\oplus B_r\cap \Omega)\le 2\ell r ,
\qquad \chi_{E_k} \to u \;{\mbox{ in }}\;L^1_{\rm{loc}}(\Omega
\setminus \Sigma\big), \qquad
 \int_\Omega \osc_{B_r(x)}u\, dx\leq 2r \ell.
\end{equation}
\end{enumerate}  
\end{theorem}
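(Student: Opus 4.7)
For Part (1), I rewrite
\[
\Per_r(E,\Omega) \;=\; \frac{1}{2r}\,\mathcal{L}^n\bigl(\{x\in\Omega : d(x,\partial E)<r\}\bigr),
\]
so the family $\{d(\cdot,\partial E)<r\}$ is nested increasing in $r$; left-continuity is automatic by monotone convergence. Right-continuity reduces to showing $\mathcal{L}^n(\{d(\cdot,\partial E)=r\}\cap\Omega)=0$ for each $r>0$, which I would deduce from the eikonal identity $|\nabla d(\cdot,\partial E)|=1$ a.e.\ on $\{d>0\}$ together with the classical fact that the gradient of a Lipschitz function vanishes a.e.\ on any set on which the function is constant: if $\mathcal{L}^n(\{d=r\})$ were positive, these two would be incompatible. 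Dominated convergence then yields $\Per_{r_k}(E,\Omega)\to\Per_r(E,\Omega)$, and the $\mathcal{F}_{r_k,g}$ statement follows because the volume term is $r$-independent.

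For Part (2), the $\Gamma$-$\limsup$ is realized by the constant recovery sequence $E_k\equiv E$ via Part (1). For the $\Gamma$-$\liminf$, suppose $\chi_{E_k}\to\chi_E$ in $L^1_{\mathrm{loc}}(\Omega)$ and fix $r'<r$; since $r_k>r'$ for large $k$, the inclusion $(\partial E_k)\oplus B_{r'}\subseteq (\partial E_k)\oplus B_{r_k}$ yields
\[
\Per_{r_k}(E_k,\Omega)\;\ge\;\frac{r'}{r_k}\,\Per_{r'}(E_k,\Omega).
\]
Passing to the liminf and applying the $L^1_{\mathrm{loc}}$-lower semicontinuity of $\Per_{r'}$ recalled after \eqref{per r}, one obtains $\liminf_k \Per_{r_k}(E_k,\Omega)\ge (r'/r)\,\Per_{r'}(E,\Omega)$; letting $r'\uparrow r$ and using left-continuity from Part (1) concludes. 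The $\mathcal{F}_{r_k,g}$ variant follows because $\int_{E_k\cap\Omega}g\to\int_{E\cap\Omega}g$ under $L^1_{\mathrm{loc}}$ convergence of $\chi_{E_k}$ with $g\in L^1_{\mathrm{loc}}$.

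For Part (3), set $u_k:=\chi_{E_k}$ and $V_k:=(\partial E_k)\oplus B_{r_k}$; a direct check shows $\osc_{B_{r_k}(x)}u_k=\chi_{V_k}(x)$ a.e., and the coarea identity of Lemma~\ref{COAREA:L} gives $\int_\Omega \osc_{B_{r_k}(x)} u_k\,dx=2r_k\Per_{r_k}(E_k,\Omega)$. For the oscillation estimate on $u$, fix $r'<r$ so that $B_{r'}(x)\subseteq B_{r_k}(x)$ eventually; weak-$L^1_{\mathrm{loc}}$ convergence of $u_k\in[0,1]$ to $u$ yields $\sup_{B_{r'}(x)}u\le\liminf_k\sup_{B_{r'}(x)}u_k$ and the dual bound for $\inf$, hence $\osc_{B_{r'}(x)}u\le\liminf_k\osc_{B_{r_k}(x)}u_k$. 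Fatou and then $r'\uparrow r$ (using $\sup_{B_{r'}(x)}u\nearrow\sup_{B_r(x)}u$ and analogously for $\inf$) yield $\int_\Omega \osc_{B_r(x)} u\,dx\le 2r\ell$.

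For the measure estimate, I claim $\chi_{\Sigma\oplus B_{r'}}\le\liminf_k\chi_{V_k}$ a.e.\ for every $r'<r$: if $x\notin V_k$ along a subsequence, connectedness of $B_{r_k}(x)$ and its disjointness from $\partial E_k$ force $u_k$ to be essentially constant (equal to some $c_k\in\{0,1\}$) on $B_{r_k}(x)\supseteq B_{r'}(x)$; extracting a sub-subsequence with $c_k\equiv c\in\{0,1\}$ and testing the weak convergence against $\chi_{B_{r'}(x)}$ forces $u\equiv c$ a.e.\ on $B_{r'}(x)$, contradicting $|B_{r'}(x)\cap\Sigma|>0$ (which holds for the natural Lebesgue-density representative of $\Sigma\oplus B_{r'}$). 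Fatou then gives $\mathcal{L}^n(\Sigma\oplus B_{r'}\cap\Omega)\le\liminf_k\mathcal{L}^n(V_k\cap\Omega)=2r\ell$, and $r'\uparrow r$ completes the bound. Finally, for strong convergence on $\Omega\setminus\Sigma$ one uses $u\in\{0,1\}$ there, hence $u^2=u$: on any compact $K\subset\Omega\setminus\Sigma$,
\[
\int_K(u_k-u)^2\,dx=\int_K u_k\,dx-2\int_K u_k u\,dx+\int_K u\,dx\;\longrightarrow\;0
\]
by weak convergence of $u_k$ against $\chi_K$ and against $u\chi_K$. The main obstacle is the measure-theoretic density argument pinning down $\Sigma\oplus B_{r'}\subseteq \liminf V_k$ up to null sets, together with the justification of right-continuity in Part (1) via the eikonal identity.
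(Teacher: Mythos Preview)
Your argument is correct and takes a genuinely different route from the paper in several places.

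\textbf{Part (1).} The paper shows $\mathcal{L}^n\bigl(\partial((\partial E)\oplus B_r)\bigr)=0$ by quoting an estimate on $\mathcal{H}^{n-1}\bigl(\partial(A\oplus B_r)\bigr)$ from the literature; your eikonal-identity argument ($|\nabla d|=1$ a.e.\ off $\partial E$, together with the vanishing of $\nabla d$ a.e.\ on each level set) reaches the same conclusion $\mathcal{L}^n(\{d=r\})=0$ in a more self-contained way. One omission: when $\Omega=\R^n$ and $\partial E$ is unbounded, dominated convergence is unavailable; the paper handles this by noting that then $(\partial E)\oplus B_\rho$ has infinite measure for every $\rho>0$, so both sides equal $+\infty$. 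You should add this one-line case distinction.

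\textbf{Parts (2) and (3).} Here the difference is more substantial. The paper rescales each $E_k$ by the factor $r/r_k$ so as to reduce everything to the fixed functional $\Per_r$, and then controls the error coming from the rescaled domain $\widetilde\Omega_k$ versus $\Omega$. You instead exploit directly the monotonicity $(\partial E_k)\oplus B_{r'}\subseteq(\partial E_k)\oplus B_{r_k}$ for $r'<r<r_k$, pass the liminf through the fixed functional $\Per_{r'}$ (respectively through $\osc_{B_{r'}(x)}$) using its lower semicontinuity, and finally send $r'\uparrow r$ via monotone convergence. This avoids any rescaling or domain-comparison step and is arguably cleaner; the price is that you must verify the pointwise inequality $\osc_{B_{r'}(x)}u\le\liminf_k\osc_{B_{r'}(x)}u_k$ for weak limits, which you do correctly via $\esssup_B u\le\liminf_k\esssup_B u_k$ and the liminf inequality $\liminf a_k-\limsup b_k\le\liminf(a_k-b_k)$. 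Your computation of $\int_K(u_k-u)^2\to0$ on compacta $K\subset\Omega\setminus\Sigma$ is exactly the content of the paper's preliminary Lemma~\ref{lemchar}; the paper isolates it as a standalone tool, while you fold it into the main argument. The point you flag about choosing the density-one representative of $\Sigma$ is correct and consistent with the paper's standing convention of identifying sets with their points of density one.
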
 

Observe that  we cannot expect a stronger compactness result, due to the following observation. 
\begin{remark}\label{NOCOM}
Families of sets~$E_k$ for which~$\Per_r(E_k,\Omega)\le 1$
are not necessarily compact in~$L^1(\Omega)$ (and, more generally,
it is not necessarily true that~$\chi_{E_k}$ converges pointwise
up to a subsequence).
\end{remark}

\begin{remark} \label{REML01293dfas}
When~$\Omega$ is unbounded and~$r_k\searrow r>0$,
some pathological counterexamples to the claim in~(1)
of Theorem~\ref{SICOM:01} may arise. For instance, one may have that
\begin{equation}\label{0q9weir9wurgrodighsdifhw}
\Per_{r_k}(E,\Omega)=+\infty 
\quad{\mbox{ while }}\quad
\Per_{r}(E,\Omega)=0.\end{equation}
\end{remark}

%
%

We recall also the following result, which is proved in \cite[Theorem 3.1 and Remark 3.4]{MR3187918}.

\begin{theorem}\label{SICOM} 
Let~$\Omega\subseteq\R^n$ be open and bounded, and~$r\to 0$.
Then the following holds.
\begin{enumerate} 
\item $\Per_{r}(\cdot, \Omega)$ $\Gamma$-converges in $L^1_{\rm{loc}}(\Omega)$ to 
$\Per(\cdot, \Omega)$, where $\Per$ is the standard perimeter. 
\item Let  $E_r\subseteq\R^n$ be such that 
\begin{equation*}
\sup_{r} \Per_{r}(E_r,\Omega)<+\infty.\end{equation*}
Then there exists $E\subseteq \R^n$ such that 
\[ \chi_{E_r}\to \chi_E \quad{\mbox{ in }} L^1_{\rm{loc}}(\Omega)\quad{\mbox{up to a subsequence}}\]
and \[\Per(E,\Omega)\leq \liminf_{r\to 0} \Per_r(E_r,\Omega).\]
\end{enumerate} 

An analogous result holds for the functional~$\mathcal{F}_{r,g}$. 
\end{theorem}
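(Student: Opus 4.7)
The plan is to prove both items through a single mollification argument, combined with the classical Minkowski content theorem for the $\Gamma$-limsup. Let $\rho \in C_c^\infty(B_1)$ be a standard mollifier with $\int \rho = 1$, and set $\rho_r(x) := r^{-n}\rho(x/r)$. For measurable $E \subseteq \R^n$ define $u_E^r := \chi_E * \rho_r$. The key observation is that $u_E^r(x) = \chi_E(x)$ and $\nabla u_E^r(x) = 0$ whenever $B_r(x) \subseteq E$ or $B_r(x) \subseteq E^c$, i.e.\ whenever $x \notin (\partial E) \oplus B_r$. Combined with $|\nabla u_E^r| \le \|\nabla \rho_r\|_{L^1} = C/r$, this yields the two basic estimates
\[
\|u_E^r - \chi_E\|_{L^1(\Omega)} \le \bigl|(\partial E \oplus B_r) \cap \Omega\bigr| = 2r\, \Per_r(E, \Omega),
\qquad
\|\nabla u_E^r\|_{L^1(\Omega)} \le C\, \Per_r(E, \Omega).
\]

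For item~(2), I apply these estimates with $E = E_r$: the gradient bound shows that $(u_{E_r}^r)$ is bounded in $BV_{\rm loc}(\Omega)$, so by Rellich--Kondrachov it converges in $L^1_{\rm loc}(\Omega)$ to some $u$ along a subsequence; the first estimate then forces $\chi_{E_r} \to u$ in $L^1_{\rm loc}(\Omega)$ along the same subsequence, and since $\chi_{E_r}$ is $\{0,1\}$-valued the limit must be $\chi_E$ for some measurable $E \subseteq \R^n$. The lower semicontinuity $\Per(E, \Omega) \le \liminf_r \Per_r(E_r, \Omega)$ (which serves for both items) follows from BV lower semicontinuity once the sharp constant is in hand. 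To recover it, I would exploit the coarea identity in Lemma~\ref{COAREA:L}, which gives $\Per_r(E, \Omega) = (2r)^{-1}\int_\Omega \osc_{B_r(x)} \chi_E\, dx$, together with a Cauchy--Crofton-type slicing argument that bounds the number of one-dimensional jumps on each line in direction $\nu$ by the corresponding oscillation, and then integrates over $\nu \in \S^{n-1}$.

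For the $\Gamma$-limsup in item~(1), I take the constant recovery sequence $E_r \equiv E$. If $\partial E$ is of class $C^2$ (or $(n-1)$-rectifiable), the classical Minkowski content theorem gives $\Per_r(E, \Omega) \to \mathcal{H}^{n-1}(\partial E \cap \Omega) = \Per(E, \Omega)$; for general $E$ of finite perimeter, I approximate in BV by smooth sets and pass to a diagonal subsequence, using lower semicontinuity of the $\Gamma$-$\liminf$. The extension to $\mathcal{F}_{r,g}$ is routine, since $E \mapsto \int_{E \cap \Omega} g\, dx$ is continuous under $L^1_{\rm loc}$-convergence of $\chi_E$. The main obstacle is the sharp constant in the $\Gamma$-liminf inequality: the crude mollification bound only yields $\Per(E, \Omega) \le C \liminf_r \Per_r(E_r, \Omega)$ with a mollifier-dependent constant $C > 1$, and pinning down the optimal constant $C = 1$ requires the slicing/coarea refinement sketched above, which is the technical heart of the argument in \cite{MR3187918}.
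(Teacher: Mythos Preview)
The paper does not prove this theorem: it is quoted verbatim from \cite[Theorem~3.1 and Remark~3.4]{MR3187918}. The nearest thing the paper itself proves is Lemma~\ref{lemmatecnico}, a ``working version'' of the compactness part, obtained by tiling $\R^n$ with cubes of side $\sim r_k$, setting $\widehat E_k$ equal to the union of cubes meeting $E_k$, and counting boundary cubes to get $\Per(\widehat E_k,\Omega_k)\le C\,\Per_{r_k}(E_k,\Omega_k)$ together with $\|\chi_{\widehat E_k}-\chi_{E_k}\|_{L^1}\le C r_k\,\Per_{r_k}(E_k,\Omega_k)$; BV compactness for $\widehat E_k$ then yields the claim.

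Your mollification route is a legitimate and arguably cleaner alternative for exactly this part (no overlap bookkeeping), and it delivers the same two outputs: compactness, and the $\Gamma$-liminf with a non-sharp dimensional constant. Your $\Gamma$-limsup via the Minkowski content theorem plus density of smooth sets is standard and correct. The one genuine gap is in your sketch for the sharp constant $C=1$ in the liminf: integrating the one-directional slicing bound over $\nu\in\S^{n-1}$ cannot work, because slicing in direction $\nu$ only controls $\int_{\partial^*E\cap\Omega}|\nu_E\cdot\nu|\,d\mathcal H^{n-1}$, and averaging this over $\nu$ yields $\Per(E,\Omega)$ times a Cauchy--Crofton constant strictly smaller than $1$. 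Recovering the sharp constant requires instead a localization argument (cover $\partial^*E$ by small regions where the measure-theoretic normal is almost constant and slice each region in its own optimal direction), which is precisely the content you defer to \cite{MR3187918}. Since you flag this explicitly, your proposal is consistent with how the paper itself handles the result.
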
 
 
%
%
%

Dealing with compactness issues, it
is interesting to point out that sequences of minimizers
(differently than sequences of sets with bounded $r$-perimeter)
always provide a limit which is also a minimizer, on a smaller set.
{F}rom the technical point of view, such limit is obtained
from the support of the weak limit, once sets of zero measure are neglected. 
The precise statement goes as follows:

\begin{theorem}\label{promin}
Let $\Omega$ be a bounded open set and 
$E_k$ be a sequence of local minimizers of ${\mathcal{F}}_{r,g}$ in $\Omega$ such that  $\chi_{E_k}\rightharpoonup u$, 
with $u:\R^n\to [0,1]$, in $L^1(\Omega)$. 

Let $E$ be such that $\{ u=1\}\subseteq E\subseteq \Omega\setminus \{u=0\}$, 
and $\Sigma:= \{x\in\Omega : u(x)\in (0,1)\}$.
Then $E$ is a  local minimizer of ${\mathcal{F}}_{r,g}$ in $\Omega\ominus B_r$ and $g(x)=0$ for a.e. $x\in \Sigma$.

Moreover, if  $\mathcal L^n(\{ g=0\})=0$ then $\chi_{E_k}\to \chi_E$.  
\end{theorem}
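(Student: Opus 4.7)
Denote $A := \Omega \ominus B_r$ and $A' := \Omega \ominus B_{2r}$. The strategy is to deduce the minimality of $E$ in $A$ from that of the sequence $\{E_k\}$ in $\Omega$ via hybrid competitors. Given any admissible competitor $F$ for $E$ in $A$ (so $F \setminus A' = E \setminus A'$), set $F_k := (F \cap A') \cup (E_k \setminus A')$. Since $F_k = E_k$ on $(A')^c \supseteq A^c$ this is admissible for $E_k$ in $\Omega$; moreover $E_k$ and $F_k$ differ only inside $A'$ whose $r$-neighborhood $A' \oplus B_r$ lies in $A$, so the perimeter and volume contributions on $\Omega \setminus (A' \oplus B_r)$ cancel and the minimality $\mathcal{F}_{r,g}(E_k, \Omega) \le \mathcal{F}_{r,g}(F_k, \Omega)$ localizes to $\mathcal{F}_{r,g}(E_k, A) \le \mathcal{F}_{r,g}(F_k, A)$.

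The heart of the proof is passing to the limit. For the volume part, weak $L^1$-convergence of $\chi_{E_k} \in [0,1]$ gives $\int_{E_k \cap A'} g \to \int_{A'} u\,g$; for the perimeter, item (3) of Theorem~\ref{SICOM:01} applied with $A$ in place of $\Omega$, together with the coarea formula of Lemma~\ref{COAREA:L}, yields
\[
\liminf_k \Per_r(E_k, A) \;\ge\; \frac{1}{2r}\int_A \osc_{B_r} u\,dx \;=\; \int_0^1 \Per_r(\{u > s\}, A)\,ds,
\]
and the layer-cake identity $\int_{A'} u\,g = \int_0^1 \int_{\{u > s\} \cap A'} g\,ds$ rewrites the volume side. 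On the competitor side, $\chi_{F_k} \equiv \chi_F$ on $A'$ and $\chi_{F_k} = \chi_{E_k} \to \chi_{\{u=1\}} = \chi_F$ strongly in $L^1_{\rm loc}$ on $(A')^c \setminus \Sigma$ by the second bullet of~\eqref{ZZ-1}; combined with the measure bound $\mathcal L^n(\Sigma \oplus B_r \cap \Omega) \le 2r\ell$ from the same equation to absorb the problematic strip $\Sigma \cap (A')^c$, one obtains $\limsup_k \mathcal{F}_{r,g}(F_k, A) \le \mathcal{F}_{r,g}(F, A)$. Combining yields
\[
\int_0^1 \mathcal{F}_{r,g}(\{u > s\}, A)\,ds \;\le\; \mathcal{F}_{r,g}(F, A)
\]
for every admissible competitor $F$.

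Specializing $F$ to appropriate hybrids of $E$ with level sets forces $s \mapsto \mathcal{F}_{r,g}(\{u > s\}, A)$ to be essentially constant on $(0,1)$, so a.e.\ level set is itself a minimizer and satisfies the identity $\Per_r(\{u > s\}, A) - \Per_r(\{u > t\}, A) = -\int_{\{s < u \le t\} \cap A} g$ for $s < t$. A Lebesgue density argument comparing the two sides (the perimeter difference is controlled by the $r$-perimeter of small ball modifications, which scales like $\rho \, r^{n-2}$, while the volume integral scales like $\rho^n g(x_0)$) then yields $g(x) = 0$ for a.e.\ $x \in \Sigma$. With $g \equiv 0$ on $\Sigma$ in hand, the volume contribution of any sandwiched $E$ equals the common value $\int_{\{u=1\} \cap A} g$, and an analogous comparison of $\Per_r(E, A)$ with that of a level set (using $E \triangle \{u > s\} \subseteq \Sigma$) shows $E$ is itself a minimizer.

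For the last assertion, $\mathcal L^n(\{g = 0\}) = 0$ together with $g \equiv 0$ on $\Sigma$ forces $\mathcal L^n(\Sigma) = 0$, hence $u = \chi_{\{u=1\}} = \chi_E$ a.e. Since both $\chi_{E_k}$ and $\chi_E$ take values in $\{0,1\}$, the identity $\|\chi_{E_k} - \chi_E\|_{L^2(\Omega)}^2 = \int \chi_{E_k} - 2\int \chi_{E_k}\chi_E + \int \chi_E$ combined with weak $L^1$-convergence upgrades to $L^2$, hence $L^1$, strong convergence on the bounded domain $\Omega$. The main obstacle throughout is the upper-bound on $\limsup_k \Per_r(F_k, A)$: the weak limit of $\chi_{F_k}$ coincides with $\chi_F$ only off $\Sigma \cap (A')^c$, so the nonlocal oscillations induced across the buffer $\partial A' \oplus B_r$ by the gluing require careful handling via the measure control on $\Sigma \oplus B_r$ from~\eqref{ZZ-1}.
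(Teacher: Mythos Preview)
Your outline has a genuine gap at the step you yourself flag as the ``main obstacle'': the upper bound $\limsup_k \mathcal{F}_{r,g}(F_k,A)\le \mathcal{F}_{r,g}(F,A)$. The functional $\Per_r$ is only lower semicontinuous under $L^1$-convergence, so strong convergence of $\chi_{F_k}$ to $\chi_F$ off $\Sigma\cap (A')^c$ together with the measure bound on $\Sigma\oplus B_r$ does \emph{not} give you an upper bound on $\Per_r(F_k,A)$. In the buffer $A\setminus A'$ the $r$-neighborhood of $\partial F_k$ picks up contributions from $\partial E_k$ (which you do not control from above) and from the new boundary created by the gluing along $\partial A'$; you provide no mechanism to bound either of these by the fixed quantity $\Per_r(F,A)$.

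The paper sidesteps this entirely by never passing to the limit in the perimeter of the competitor. Instead it compares at fixed $k$: given the hybrid $E_k^\star:=(E\cup(\Sigma\cap\{g<0\}))\cap\Omega'\,\cup\,(E_k\setminus\Omega')$, it proves the direct inclusion $((\partial E_k^\star)\oplus B_r)\cap\Omega'\subseteq ((\partial E_k)\oplus B_r)\cap\Omega'$ up to a set of vanishing measure. The crucial observation is that for every $x$ in the $r$-neighborhood of $\Sigma\cup\partial E$ (or of the gluing set along $\partial\Omega'$), the ball $B_r(x)$ must meet $\partial E_k$ for large $k$, since otherwise $u$ would be identically $0$ or $1$ on $B_r(x)$. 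This yields $\Per_r(E_k^\star,\Omega')\le\Per_r(E_k,\Omega')+o(1)$ \emph{before} any limit, and then the minimality of $E_k$ applied to $E_k^\star$ directly forces $\int_{\Sigma_k}|g|\to 0$, hence $g=0$ a.e.\ on $\Sigma$. Your approach is missing this inclusion argument, which is the engine of the proof.

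Your subsequent steps are also problematic: the ``specialize $F$ to appropriate hybrids'' step is not spelled out, and the Lebesgue density argument has the wrong scaling (for small $\rho$, the $r$-perimeter of a ball $B_\rho$ is of order $\rho^n/r$, not $\rho\,r^{n-2}$), so the comparison you sketch does not force $g(x_0)=0$. The paper's choice of competitor $E\cup(\Sigma\cap\{g<0\})$ is precisely designed to extract the sign information on $g$ in one stroke, without any density/blow-up argument.
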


\subsection{The Dirichlet problem}\label{diri:sec}
 
We now  consider the Dirichlet problem 
for the functional $\Per_r$.

\begin{theorem}\label{DEBP}
Let~$E_o\subseteq\R^n$ and $\Omega$ a bounded open set. Fix $\Omega'\Subset \Omega$.
Then, there exists~$E\subseteq\R^n$ such that~$E\setminus\Omega'=
E_o\setminus \Omega'$, and
$$ \Per_r(E,\Omega)\le\Per_r(F,\Omega)$$
for any~$F\subseteq\R^n$ for which~$F\setminus\Omega'
=E_o\setminus \Omega'$.

The same holds for the functional~$\mathcal{F}_{r,g}$. 
\end{theorem}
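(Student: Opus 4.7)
The plan is to use the direct method of the calculus of variations, compensating for the lack of $L^1$-compactness (Remark~\ref{NOCOM}) by a slicing argument based on the coarea formula of Lemma~\ref{COAREA:L}.

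First I would observe that $E_o$ itself is an admissible competitor with
\[
\Per_r(E_o,\Omega)\le \frac{\mathcal{L}^n(\Omega)}{2r}<+\infty,
\]
so the infimum $m:=\inf\{\Per_r(F,\Omega):F\setminus\Omega'=E_o\setminus\Omega'\}$ is finite. Choose a minimizing sequence $F_k$. Since $0\le\chi_{F_k}\le 1$ on the bounded set $\Omega$, weak-$*$ compactness in $L^\infty$ (equivalently, Dunford--Pettis) produces a subsequence with $\chi_{F_k}\rightharpoonup u$ in $L^1_{\rm loc}(\R^n)$ for some measurable $u:\R^n\to[0,1]$, which satisfies $u=\chi_{E_o}$ a.e.\ on $\R^n\setminus\Omega'$ since the same equality holds for each $F_k$.

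Applying Theorem~\ref{SICOM:01}(3) with $r_k\equiv r$ yields $\int_\Omega\osc_{B_r(x)}u\,dx\le 2rm$. By Lemma~\ref{COAREA:L}, using that the level sets $\{u>s\}$ are trivial for $s\notin(0,1)$,
\[
\int_0^1 \Per_r\bigl(\{u>s\},\Omega\bigr)\, ds=\frac{1}{2r}\int_\Omega\osc_{B_r(x)}u\,dx\le m.
\]
An averaging argument then yields some $s^\ast\in(0,1)$ with $\Per_r(\{u>s^\ast\},\Omega)\le m$. Setting $E:=\{u>s^\ast\}$, the fact that $u=\chi_{E_o}$ outside $\Omega'$ and $s^\ast\in(0,1)$ ensures $E\setminus\Omega'=E_o\setminus\Omega'$, so $E$ is admissible; combined with $\Per_r(E,\Omega)\le m$, this forces equality and proves the existence claim for $\Per_r$. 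For $\mathcal{F}_{r,g}$ the same scheme works: the weak-$*$ convergence of $\chi_{F_k}$ against $g\in L^1(\Omega)$ gives $\int_{F_k\cap\Omega}g\,dx\to\int_\Omega ug\,dx$, while the layer-cake identity $\int_\Omega ug\,dx=\int_0^1\int_{\{u>s\}\cap\Omega}g\,dx\,ds$ combined with coarea yields $\int_0^1 \mathcal{F}_{r,g}(\{u>s\},\Omega)\,ds\le\liminf_k\mathcal{F}_{r,g}(F_k,\Omega)$, after which the same averaging step selects a good level.

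The main obstacle is precisely why one must argue through coarea: the weak limit of a sequence of characteristic functions with bounded $r$-perimeter is, in general, only $[0,1]$-valued rather than a characteristic function (cf.\ Remark~\ref{NOCOM}), so the standard argument ``weak compactness $+$ lower semicontinuity on sets'' breaks down. Passing first to the relaxation via the Dirichlet-type functional $\int\osc_{B_r}u\,dx$, and then extracting a set by slicing with Lemma~\ref{COAREA:L}, is what recovers the conclusion.
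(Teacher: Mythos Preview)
Your proof is correct and follows essentially the same route as the paper: take a minimizing sequence, pass to a weak $L^1$-limit $u\in[0,1]$, use lower semicontinuity of the oscillation functional together with the coarea formula (Lemma~\ref{COAREA:L}) to bound $\int_0^1\Per_r(\{u>s\},\Omega)\,ds$, and then select a good level $s^\ast\in(0,1)$. The only cosmetic differences are that you invoke Theorem~\ref{SICOM:01}(3) (with $r_k\equiv r$) where the paper cites the lower semicontinuity of the oscillation functional directly, and that you spell out the $\mathcal{F}_{r,g}$ case via the layer-cake identity, which the paper leaves implicit.
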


\subsection{Class~A minimizers} \label{sec:class}

In this subsection, we present some rigidity results for the nonlocal functionals
introduced in~\eqref{per r}.  

Next result shows that half-spaces are always 
Class~A minimizers for $\Per_r$.

\begin{proposition}\label{piani}
Let  $\omega\in\R^n$ and $E=\{x\ | x\cdot \omega<0\}$. 
Then $E$ is a Class~A minimizer for~$\Per_r$. 
\end{proposition}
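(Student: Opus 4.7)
The plan is to show, for every ball $B=B_R(x_0)$ with $R>r$ (otherwise no competitor exists) and every admissible $F$ (meaning $F\setminus B_{R-r}(x_0)=E\setminus B_{R-r}(x_0)$), the inequality $\Per_r(F,B)\ge\Per_r(E,B)$, by slicing along lines parallel to $\omega$. After translating, take $x_0=0$ and, rescaling $\omega$, assume $|\omega|=1$. Writing $\R^n=\omega^\perp\times\R$ via $x=y+t\omega$ and $I_y:=(-\sqrt{R^2-|y|^2},\sqrt{R^2-|y|^2})$, Fubini gives
\[
2r\,\Per_r(G,B)\;=\;\int_{|y|<R}\phi_G(y)\,dy,\qquad \phi_G(y):=\mathcal{L}^1\bigl\{t\in I_y:\,y+t\omega\in(\partial G)\oplus B_r\bigr\},
\]
and a direct computation shows $\phi_E(y)=\mathcal{L}^1((-r,r)\cap I_y)$. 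The task reduces to the slicewise bound $\phi_F(y)\ge\phi_E(y)$ for a.e. $|y|<R$.

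I would then split according to whether the line $L_y=y+\R\omega$ enters the free region $B_{R-r}$. In the outer case $|y|\ge R-r$, the ball $B_{|y|-(R-r)}(y)\subseteq B_{R-r}^c$ satisfies $F=E$ throughout; for each $|t|<r$ its intersection with $B_r(y+t\omega)$ straddles $\omega^\perp=\partial E$, so $B_r(y+t\omega)$ carries positive measure of both $F$ and $F^c$, yielding $y+t\omega\in(\partial F)\oplus B_r$ and $\phi_F(y)\ge\phi_E(y)$. In the inner case $|y|<R-r$, one has $\phi_E(y)=2r$. Setting $\tau(y):=\sqrt{(R-r)^2-|y|^2}$, the 1D slice $\tilde F_y:=\{t:\,y+t\omega\in F\}$ agrees with $(-\infty,0)$ outside $[-\tau,\tau]$ and hence admits a 1D essential boundary point $t^*\in[-\tau,\tau]$; an elementary estimate gives $(t^*-r,t^*+r)\subset I_y$. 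The crux becomes: for every $s\in(t^*-r,t^*+r)$, show that $B_r(y+s\omega)$ has positive measure of both $F$ and $F^c$, giving $\phi_F(y)\ge 2r$.

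This last claim is the main obstacle, and I would handle it by combining (i) the 1D essential boundary property at $t^*$, which gives positive 1D measure of both $\tilde F_y$ and $\tilde F_y^c$ in every neighborhood of $t^*$; (ii) the density-one identification of the paper together with Lebesgue's density theorem applied through Fubini, so that for a.e.\ $y$, a.e.\ point of $\tilde F_y$ (resp.\ $\tilde F_y^c$) is an $n$-dimensional density-one point of $F$ (resp.\ $F^c$); (iii) the observation that any such density-one point near $t^*$ is the center of a small $n$-dimensional ball of positive $F$-measure (resp.\ $F^c$-measure) which sits inside $B_r(y+s\omega)$. The delicate point is precisely this bootstrap from 1D positive measure on a single slice to $n$-dimensional positive measure in a neighborhood, which is not automatic but is rescued by the density-one representation convention. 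Once it is in place, integrating $\phi_F\ge\phi_E$ over $|y|<R$ and dividing by $2r$ yields $\Per_r(F,B)\ge\Per_r(E,B)$, so $E$ is a Class~A minimizer per Definition~\ref{DEFIN4}.
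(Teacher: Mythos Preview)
Your argument is correct and shares the paper's core strategy: Fubini along lines parallel to $\omega$ and the slicewise bound $\phi_F\ge\phi_E$. The paper, however, streamlines the execution in two ways. First, it compares in a cylinder $C_R\supset B_R$ rather than in the ball itself; since $F\setminus B_{R-r}=E\setminus B_{R-r}$ forces $(\partial F)\oplus B_r$ and $(\partial E)\oplus B_r$ to coincide outside $B_R$, the two comparisons are equivalent, and the cylinder eliminates your inner/outer case split together with the estimate $(t^*-r,t^*+r)\subset I_y$. Second, and more substantively, the paper bypasses your 1D essential-boundary-plus-Fubini bootstrap entirely: on each line $\ell_{x'}$ one has $(x',t)\in\mathrm{int}(F)$ for $t\ll0$ and $(x',t)\in\mathrm{int}(F^c)$ for $t\gg0$ (using $F=E$ outside $B_R$ and the density-one convention), so connectedness of $\ell_{x'}$ yields an honest $n$-dimensional boundary point $(x',t^*)\in\partial F$ on the line itself, whence $\mathcal H^1\big(((\partial F)\oplus B_r)\cap\ell_{x'}\big)\ge 2r$ immediately. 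Your bootstrap is valid, but locating $\partial F$ directly on each slice is both shorter and the conceptual heart of the proof.
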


In addition, we give  the complete
characterization of Class~A minimizers in dimension~$1$,
according to the following result:

\begin{theorem}\label{DI1PR}
If $E$ is a Class~A minimizer for ${\Per _r}$ and $n=1$, then $E$ is
either $\varnothing$ or $\R$ or a
halfline of the type either~$(a, +\infty)$ or~$(-\infty, a)$,
for some~$a\in\R$. 
\end{theorem}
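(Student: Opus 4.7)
The plan is to show that, in one dimension, the boundary of any Class~A minimizer contains at most a single point; this immediately gives the conclusion of the theorem, since~$\partial E=\varnothing$ forces~$E\in\{\varnothing,\R\}$, while a one-point boundary forces $E$ to be a halfline. The strategy has two parts: an a priori compactness step that confines~$\partial E$ to a bounded interval, and an explicit surgery replacing~$E$ in a large ball by~$\varnothing$, $\R$, or a single halfline, which beats~$E$ in~$\Per_r$ whenever $|\partial E|\geq 2$.

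For the compactness step, I use the Class~A a priori estimate~$\Per_r(E,B_R)\leq n\omega_nR^{n-1}$ stated in the remark following Definition~\ref{DEFIN4}. In dimension~$n=1$, since~$\omega_1=2$, this reads~$\Per_r(E,B_R)\leq 2$ for every~$R\geq 2r$, and by monotonicity in the domain, letting~$R\to+\infty$ gives the global bound~$\Per_r(E)\leq 2$, equivalently~$\mathcal{L}^1\big((\partial E)\oplus B_r\big)\leq 4r$. A closed subset of~$\R$ whose $r$-enlargement has finite Lebesgue measure must be bounded: otherwise one extracts from~$\partial E$ a subsequence of points whose consecutive gaps are at least~$2r$, producing infinitely many pairwise disjoint open intervals of length~$2r$ inside the enlargement. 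Hence~$\partial E\subseteq[a,b]$ for some finite interval.

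Assume now, toward a contradiction, that~$E\notin\{\varnothing,\R\}$ and~$E$ is not a halfline. Then~$\partial E$ contains two distinct points~$p_1<p_2$, and
\[(\partial E)\oplus B_r\;\supseteq\;(p_1-r,p_1+r)\cup(p_2-r,p_2+r)\]
has Lebesgue measure strictly greater than~$2r$, so~$\Per_r(E)>1$. Since~$\partial E\subseteq[a,b]$, the set~$E$ (identified with its density-one points) is essentially constant on each unbounded component of~$\R\setminus[a,b]$; let~$v_-,v_+\in\{0,1\}$ denote these values. Choose~$R>r+\max(|a|,|b|)+1$, so that~$[a,b]\subset B_R\ominus B_r$ and~$(\partial E)\oplus B_r\subset B_R$, and define the competitor~$F$ as~$\varnothing$ if~$v_-=v_+=0$, as~$\R$ if~$v_-=v_+=1$, and otherwise as the halfline with limit values~$v_\pm$ at~$\pm\infty$ and cut point at~$0$. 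By construction~$F$ agrees with~$E$ outside~$B_R\ominus B_r$, so it is an admissible competitor in~$B_R$, and~$\partial F\cap B_R$ contains at most one point, whence~$\Per_r(F,B_R)\leq 1$. On the other hand~$(\partial E)\oplus B_r\subset B_R$ gives~$\Per_r(E,B_R)=\Per_r(E)>1$, which contradicts Class~A minimality and proves the theorem.

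The main obstacle is organisational rather than conceptual: one must exploit the identification of~$E$ with its density-one points to make the asymptotic values~$v_\pm$ unambiguously defined, and pick~$R$ large enough that~$F$ is admissible while the whole enlargement~$(\partial E)\oplus B_r$ remains inside the comparison ball (so that no contribution of~$\Per_r(E)$ is lost outside~$B_R$). Once this bookkeeping is in place, the theorem follows from the single comparison~$\Per_r(F,B_R)\leq 1<\Per_r(E,B_R)$.
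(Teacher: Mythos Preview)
Your proof is correct and takes a genuinely different route from the paper's. The paper argues locally: assuming $E\notin\{\varnothing,\R\}$, it first observes that both $E$ and its complement must be unbounded (otherwise $\varnothing$ or $\R$ would be a strictly better competitor on a large enough interval), and then proves that $E$ is connected by a local filling argument---if $p,q\in E$ with some $\beta\in(\partial E)\cap(p,q)$, the competitor $F:=E\cup(p,q)$ agrees with $E$ outside $(p,q)$ and one checks directly that $\Per_r(F,(p-r,q+r))<\Per_r(E,(p-r,q+r))$. Your argument is global: you first invoke the a priori bound $\Per_r(E,B_R)\le 2$ from the remark after Definition~\ref{DEFIN4} to force $\partial E$ into a bounded interval, and then compare $E$ in one shot with the obvious candidate (halfline or trivial set) in a ball large enough to contain all of $(\partial E)\oplus B_r$. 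The paper's approach is self-contained and exposes the local rigidity mechanism (any interior boundary point can be erased by filling), and it would generalize to settings where no global perimeter bound is available. Your approach is shorter once the a priori estimate is granted and sidesteps the careful bookkeeping of $(\partial F)\setminus(p,q)$ versus $(\partial E)\setminus(p,q)$ that the paper carries out.
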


%
%
It would be interesting to study  the  Bernstein problem for $\Per_r$. 
In particular, in analogy with the classical perimeter, one could expect that the 
 the Class~A minimizers are only  $\varnothing$ or $\R^n$ 
 or half-spaces, at least in small dimension. 
 
%


\subsection{Isoperimetric inequalities and density estimates}\label{Isoperimetric SE}

We now discuss the isoperimetric properties of the functional~$\Per_r$. To this end, we first
point out that balls
are isoperimetric for the functional in~\eqref{per r},
as a consequence of the Brunn-Minkowski Inequality.
Namely, we have that:

\begin{lemma}\label{ISOR}
\begin{itemize}
\item[(i)]
For any~$R>0$ and any measurable set~$E\subseteq\R^n$ 
such that~${\mathcal{L}}^n(E)=
{\mathcal{L}}^n(B_R)$ it holds that
\begin{equation}\label{FORMULA ISP} \Per_r(E)\ge\Per_r(B_R).\end{equation}
\item[(ii)] Viceversa,
if~${\mathcal{L}}^n(E)=
{\mathcal{L}}^n(B_R)$ and
$$ \Per_r(E)=\Per_r(B_R),$$
then~$E=B_{ R}(p)\setminus{\mathcal{N}}$,
for some set~${\mathcal{N}}$ of null measure and some~$p\in\R^n$.
\end{itemize}\end{lemma}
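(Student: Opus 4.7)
The key observation is the disjoint decomposition stated in \eqref{omega r}, which immediately yields the identity
\begin{equation*}
2r\,\Per_r(E)=\mathcal{L}^n(E\oplus B_r)-\mathcal{L}^n(E\ominus B_r).
\end{equation*}
My plan is therefore to bound $\mathcal{L}^n(E\oplus B_r)$ from below and $\mathcal{L}^n(E\ominus B_r)$ from above by applying the Brunn--Minkowski inequality in two different ways, then compare the difference with the explicit value $2r\,\Per_r(B_R)=\omega_n\bigl[(R+r)^n-(R-r)_+^n\bigr]$ obtained by direct computation using $B_R\oplus B_r=B_{R+r}$ and $B_R\ominus B_r=B_{(R-r)_+}$.

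For the lower bound on the dilation, I would apply Brunn--Minkowski directly to $E$ and $B_r$, giving
\begin{equation*}
\mathcal{L}^n(E\oplus B_r)^{1/n}\ge \mathcal{L}^n(E)^{1/n}+\mathcal{L}^n(B_r)^{1/n}=\omega_n^{1/n}(R+r).
\end{equation*}
For the upper bound on the erosion, I would first observe that, since we identify $E$ with its points of density one and $\partial E$ with the topological boundary of this set, any $x\in E\ominus B_r$ satisfies $\operatorname{dist}(x,\partial E)\ge r$, so the connected set $B_r(x)$ cannot meet $\partial E$ and hence lies (up to a null set) inside $E$. This yields $(E\ominus B_r)\oplus B_r\subseteq E$ up to measure zero, and a second application of Brunn--Minkowski gives
\begin{equation*}
\omega_n R^n=\mathcal{L}^n(E)\ge \mathcal{L}^n\bigl((E\ominus B_r)\oplus B_r\bigr)\ge \bigl(\mathcal{L}^n(E\ominus B_r)^{1/n}+\omega_n^{1/n}r\bigr)^{n},
\end{equation*}
so $\mathcal{L}^n(E\ominus B_r)\le \omega_n(R-r)_+^n$ (the case $R<r$ being automatic since $E$ then contains no ball of radius $r$). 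Subtracting the two estimates yields \eqref{FORMULA ISP}.

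For part (ii), I would run the equality case. If $\Per_r(E)=\Per_r(B_R)$, then both Brunn--Minkowski inequalities above must be equalities. Using the classical characterization of the equality case (equality in $\mathcal{L}^n(A+B)^{1/n}=\mathcal{L}^n(A)^{1/n}+\mathcal{L}^n(B)^{1/n}$ forces $A$ and $B$ to be homothetic convex sets up to translation and a null set), equality in $\mathcal{L}^n(E\oplus B_r)^{1/n}=\mathcal{L}^n(E)^{1/n}+\mathcal{L}^n(B_r)^{1/n}$ forces $E$ to be a ball up to a null set, and the volume constraint fixes its radius to be $R$. The case $R\le r$ reduces to using only the dilation inequality, whose equality case gives the same conclusion.

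The main technical obstacle is the inclusion $(E\ominus B_r)\oplus B_r\subseteq E$ up to a null set: although intuitively clear, it relies on the paper's convention of working with the density-one representative, and some care is needed when $E\ominus B_r$ is empty or when $E$ is irregular. The rest is a clean application of Brunn--Minkowski together with its sharp equality characterization.
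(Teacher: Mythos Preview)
Your proposal is correct and follows essentially the same route as the paper: both arguments use the decomposition $2r\,\Per_r(E)=\mathcal{L}^n(E\oplus B_r)-\mathcal{L}^n(E\ominus B_r)$, apply Brunn--Minkowski once to bound $\mathcal{L}^n(E\oplus B_r)$ from below and a second time to $(E\ominus B_r)\oplus B_r\subseteq E$ to bound $\mathcal{L}^n(E\ominus B_r)$ from above, and then invoke the equality case of Brunn--Minkowski for (ii). The only cosmetic difference is that the paper first disposes of the case where $\partial E$ is unbounded (trivially giving $\Per_r(E)=+\infty$) and introduces an auxiliary radius $\widetilde R$ with $\mathcal{L}^n(E\ominus B_r)=\mathcal{L}^n(B_{\widetilde R})$, but the mathematical content is identical.
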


We present now a version of
the relative isoperimetric inequality for~$\Per_r$
in an appropriate scale:

\begin{theorem}\label{ISOPER}
Let assume there exists $\lambda\geq 1$ such that 
\begin{equation} \label{MAGGIORE}
\lambda R\ge r>0.\end{equation} There exists~$C>0$, possibly depending on~$n$, such that
for all ~$E\subseteq\R^n$ with 
\begin{equation}\label{178:89ooo}
\frac{ {\mathcal{L}}^n (E\cap B_R)}{
{\mathcal{L}}^n (B_R) }\le \frac12,
\end{equation}
 there holds
\begin{equation}\label{78:89ooo}\Big(
{\mathcal{L}}^n (E\cap B_R)\Big)^{\frac{n-1}{n}} \le C\lambda\,\Per_r(E,B_R).
\end{equation}
\end{theorem}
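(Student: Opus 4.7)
My plan is to combine a coarea formula for the distance-to-boundary function with the classical relative isoperimetric inequality on $B_R$, applied to the inner parallel sets $E\ominus B_t$. Set $f(x):=\mathrm{dist}(x,\partial E)$; since $f$ is $1$-Lipschitz with $|\nabla f|=1$ a.e.\ on $\{f>0\}$, the coarea formula gives
\[
2r\,\Per_r(E,B_R)=\mathcal{L}^n\big((\partial E)_r\cap B_R\big)=\int_0^r\mathcal{H}^{n-1}\big(\{f=t\}\cap B_R\big)\,dt.
\]
For a.e.\ $t\in(0,r)$ the slice area is finite, so $E\ominus B_t=\{x\in E:\,f(x)\ge t\}$ has finite perimeter in $B_R$ with reduced boundary contained in $\{f=t\}$. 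Since $\mathcal{L}^n\big((E\ominus B_t)\cap B_R\big)\le\mathcal{L}^n(E\cap B_R)\le\mathcal{L}^n(B_R)/2$, the classical relative isoperimetric inequality gives
\[
\mathcal{L}^n\big((E\ominus B_t)\cap B_R\big)^{(n-1)/n}\le C_n\,\mathcal{H}^{n-1}\big(\{f=t\}\cap B_R\big),
\]
and integrating over $t\in(0,r)$ yields
\[
\int_0^r\mathcal{L}^n\big((E\ominus B_t)\cap B_R\big)^{(n-1)/n}\,dt\le 2C_n\,r\,\Per_r(E,B_R).
\]

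Next I would distinguish two cases. \emph{Case A:} $\mathcal{L}^n(E\cap B_R)\ge 4r\,\Per_r(E,B_R)$. Since $\mathcal{L}^n((\partial E)_t\cap B_R)\le 2r\,\Per_r$ for $t\le r$, every $t\in(0,r)$ satisfies $\mathcal{L}^n((E\ominus B_t)\cap B_R)\ge \mathcal{L}^n(E\cap B_R)-2r\,\Per_r\ge \mathcal{L}^n(E\cap B_R)/2$, and the integrated estimate collapses to $\mathcal{L}^n(E\cap B_R)^{(n-1)/n}\le C\,\Per_r\le C\lambda\,\Per_r$. \emph{Case B:} $\mathcal{L}^n(E\cap B_R)<4r\,\Per_r$. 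The conclusion is trivial if $\mathcal{L}^n(E\cap B_R)=0$; otherwise connectedness of $B_R$ forces some $z\in\partial E\cap B_R$, and using $B_r(z)\cap B_R\subseteq (\partial E)_r\cap B_R$ together with the elementary bound $\mathcal{L}^n(B_r(z)\cap B_R)\ge c_n\min(r,R)^n$ one obtains $\Per_r(E,B_R)\ge c_n\min(r^{n-1},R^n/r)$. The hypothesis $\lambda R\ge r$ (with $\lambda\ge 1$) reduces this to $\Per_r\ge c_n r^{n-1}/\lambda^n$, and substituting into $\mathcal{L}^n(E\cap B_R)^{(n-1)/n}<(4r\,\Per_r)^{(n-1)/n}$ yields $\mathcal{L}^n(E\cap B_R)^{(n-1)/n}\le C\lambda\,\Per_r$.

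The main technical point is verifying that $E\ominus B_t$ has locally finite perimeter in $B_R$ for a.e.\ $t\in(0,r)$, so that the classical relative isoperimetric inequality applies slice by slice. This follows from the integrability in $t$ of $\mathcal{H}^{n-1}(\{f=t\}\cap B_R)$ guaranteed by $\Per_r(E,B_R)<\infty$, though it requires a careful treatment of the possibly irregular topological boundary of a merely measurable set (here under the convention that $E$ is identified with its points of density one, so that $\mathcal{L}^n(\partial E)=0$). The role of the scale parameter $\lambda$ is to absorb precisely the regime where $r$ is not comparable to $R$: Case A produces the scale-free estimate, while Case B quantifies how the unavoidable lower bound on $\Per_r$ coming from a single boundary point degrades with $\lambda$.
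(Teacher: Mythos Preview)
Your argument is correct and takes a genuinely different route from the paper. The paper splits off the regime $R<r\le\lambda R$ by the same ``single boundary point'' lower bound you use in Case~B, but for the main regime $r\le R$ it argues by contradiction: it rescales a hypothetical violating sequence so that $\widetilde r_k\to 0$, invokes the cube-approximation Lemma~\ref{lemmatecnico} to replace $\widetilde E_k$ by sets $\widehat E_k$ with uniformly bounded \emph{classical} perimeter in $B_{\widetilde R_k}$, and then applies the classical relative isoperimetric inequality to $\widehat E_k$ to reach a contradiction. Your approach is instead direct: the coarea identity for the (signed) distance to $\partial E$ lets you apply the classical relative isoperimetric inequality slice-by-slice to the inner parallel sets $E\ominus B_t=\{d_E>t\}$ and integrate in $t$. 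This avoids the compactness machinery and the technical Lemma~\ref{lemmatecnico} entirely, and yields an explicit constant in terms of the classical isoperimetric constant. The paper's route, on the other hand, packages the passage from $\Per_r$ to $\Per$ into Lemma~\ref{lemmatecnico}, which is reused elsewhere (e.g.\ in the $\Gamma$-convergence Theorem~\ref{SICOM}), so there is some economy in their presentation. Two small remarks on your write-up: the coarea identity $2r\,\Per_r(E,B_R)=\int_0^r\mathcal{H}^{n-1}(\{f=t\}\cap B_R)\,dt$ is only an inequality $\ge$ in general (since $\mathcal{L}^n(\partial E)$ need not vanish under the paper's convention), but only this direction is used; and the cleanest way to justify that $E\ominus B_t$ has finite perimeter with $\Per(E\ominus B_t,B_R)\le\mathcal{H}^{n-1}(\{f=t\}\cap B_R)$ for a.e.\ $t$ is to work with the signed distance $d_E$ and combine the $BV$ coarea formula with the Lipschitz coarea formula.
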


For the proof of this result we will need the 
following technical lemma (which can be seen as a
working version of the compactness result in
Theorem~\ref{SICOM}).

\begin{lemma} \label{lemmatecnico} Let~$\Omega\subseteq\R^n$ be open and bounded.
Consider a sequence of sets  $\Omega_k
\supseteq\Omega$, such that 
\begin{equation}\label{LIPOM}
{\mbox{$\partial \Omega_k$ is  uniformly locally Lipschitz.
}} 
\end{equation}
Let 
\begin{equation}\label{UNIKArk} 
r_k\to 0\;{\mbox{ as }}\;k\to+\infty\end{equation}
and $E_k\subseteq\R^n$ such that 
\begin{equation}\label{UNIKA}
\sup_{k\in\N} \Per_{r_k}(E_k,\Omega_k)<+\infty.\end{equation}
 
Then, there exist~$\widehat E_k\subseteq \R^n$,   $E\subseteq \R^N$ 
and a constant~$C>0$ only depending on~$n$ such
that
\begin{eqnarray}
\label{0001:INCLU}
&&\widehat E_k\supseteq E_k,\\
\label{0001:BIS}
&& \Per( \widehat E_k,\,
\Omega_k )\le C\,\Per_{r_k}(E_k,\Omega_k)\\
\label{0002:BIS} 
&& \int_{\Omega_k }
|\chi_{E_k}-\chi_{\widehat E_k}|\,dx\le
C\,r_k\,\Per_{r_k}(E_k,\Omega_k)\\ 
\label{nuovaconv}{\mbox{and }} && 
\chi_{E_k}\to\chi_E \qquad {\mbox{in $L^1(\Omega)$ up to a subsequence.}}
\end{eqnarray}
\end{lemma}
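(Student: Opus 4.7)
My plan is to define $\widehat E_k$ as a slight enlargement $E_k\oplus B_{r_k^*}$ of $E_k$, where $r_k^*\in(r_k/2,r_k)$ is chosen by a mean-value argument so that the level set $\{\operatorname{dist}(\cdot,E_k)=r_k^*\}$ has small $\mathcal{H}^{n-1}$-measure inside $\Omega_k$. The volume price paid by this enlargement is of order $r_k\,\Per_{r_k}(E_k,\Omega_k)$ and disappears in the limit, while $\widehat E_k$ itself has classical perimeter comparable to the $r_k$-perimeter of $E_k$, which brings the problem within the reach of standard $BV$ compactness.

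The key object is the $1$-Lipschitz distance function $d_k(x):=\operatorname{dist}(x,E_k)$. Since $|\nabla d_k|\le 1$ almost everywhere, the coarea formula and the inclusion $\{0<d_k<r_k\}\subseteq (\partial E_k)\oplus B_{r_k}$ (any such point is joined to its nearest point in $E_k$ by a segment that must cross $\partial E_k$) give
\[
\int_{r_k/2}^{r_k}\mathcal{H}^{n-1}\bigl(\{d_k=t\}\cap\Omega_k\bigr)\,dt\,\le\,\mathcal{L}^n\bigl(((\partial E_k)\oplus B_{r_k})\cap\Omega_k\bigr)\,=\,2r_k\,\Per_{r_k}(E_k,\Omega_k).
\]
Therefore I can select $r_k^*\in(r_k/2,r_k)$, additionally a regular value of $d_k$, with $\mathcal{H}^{n-1}(\{d_k=r_k^*\}\cap\Omega_k)\le 4\,\Per_{r_k}(E_k,\Omega_k)$. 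Setting $\widehat E_k:=\{d_k<r_k^*\}=E_k\oplus B_{r_k^*}$, the containment \eqref{0001:INCLU} is immediate; the essential boundary of $\widehat E_k$ lies inside $\{d_k=r_k^*\}$, proving \eqref{0001:BIS} with $C=4$. For \eqref{0002:BIS}, the inclusion $\widehat E_k\setminus E_k\subseteq\{0<d_k<r_k\}\subseteq (\partial E_k)\oplus B_{r_k}$ yields
\[
\mathcal{L}^n\bigl((\widehat E_k\setminus E_k)\cap\Omega_k\bigr)\,\le\,\mathcal{L}^n\bigl(((\partial E_k)\oplus B_{r_k})\cap\Omega_k\bigr)\,=\,2r_k\,\Per_{r_k}(E_k,\Omega_k).
\]

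For the compactness assertion \eqref{nuovaconv}, the bound \eqref{0001:BIS} together with \eqref{UNIKA} and the boundedness of $\Omega$ make $\chi_{\widehat E_k}$ a uniformly bounded sequence in $BV(\Omega)$. The classical $BV$ compactness theorem (for which hypothesis \eqref{LIPOM} guarantees uniform control up to $\partial\Omega_k$) then provides a subsequence with $\chi_{\widehat E_k}\to\chi_E$ in $L^1(\Omega)$ for some measurable $E\subseteq\R^n$. The estimate \eqref{0002:BIS} combined with \eqref{UNIKArk} implies $\|\chi_{E_k}-\chi_{\widehat E_k}\|_{L^1(\Omega)}\to 0$, so that $\chi_{E_k}\to\chi_E$ in $L^1(\Omega)$ along the same subsequence.

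I expect the main obstacle to be the selection of a good $r_k^*$: the naive choice $r_k^*=r_k$ would control only an average of $\mathcal{H}^{n-1}(\{d_k=t\})$ over $t$, not any single level, and $\partial(E_k\oplus B_{r_k})$ could a priori be very irregular; the averaging argument over a window of comparable thickness is what produces a good slice while simultaneously keeping the volume correction of the right order. A secondary technical point is to ensure $L^1(\Omega)$ and not merely $L^1_{\mathrm{loc}}(\Omega)$ convergence, where the uniform Lipschitz regularity of $\partial\Omega_k$ is what prevents loss of mass near $\partial\Omega$.
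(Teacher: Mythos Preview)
Your proof is correct and takes a genuinely different route from the paper's. The paper constructs $\widehat E_k$ combinatorially: it tiles $\R^n$ by cubes of side $\frac{r_k}{4\sqrt n}$, lets $\widehat E_k$ be the union of all cubes meeting $E_k$, and then bounds the number of ``boundary cubes'' by showing that each such cube forces a contribution of order $r_k^n$ to $\mathcal L^n\big(((\partial E_k)\oplus B_{r_k})\cap\Omega_k\big)$. The Lipschitz hypothesis~\eqref{LIPOM} is used precisely here, to guarantee that a cube touching $\Omega_k$ keeps a definite fraction of its dilation inside $\Omega_k$. Your coarea-slicing argument sidesteps this entirely: the bounds \eqref{0001:BIS} and \eqref{0002:BIS} follow with explicit constants ($C=4$ and $C=2$) and without invoking~\eqref{LIPOM}, which in your version is only relevant for the final $BV$ compactness step. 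Your approach is shorter and more elementary; the paper's cube construction, by contrast, is closer in spirit to the discrete and anisotropic frameworks of \cite{MR2655948,MR3187918} cited there, and produces a polyhedral $\widehat E_k$.

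Two small remarks. First, the phrase ``regular value of $d_k$'' is not needed and not quite meaningful, since $d_k$ is only Lipschitz; what you actually use is that $\partial\{d_k<r_k^*\}\subseteq\{d_k=r_k^*\}$ (continuity) together with the fact that $\mathcal H^{n-1}(\partial U)<\infty$ forces finite perimeter with $\Per(U)\le\mathcal H^{n-1}(\partial U)$. Second, for the last step both you and the paper invoke $BV$ compactness in $L^1(\Omega)$ with $\Omega$ merely open and bounded; strictly speaking this gives $L^1_{\rm loc}(\Omega)$ convergence unless $\Omega$ is an extension domain, but in the only application (balls $\Omega_k=B_{\widetilde R_k}$) this is harmless.
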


\begin{remark}\label{FAIL}
We stress that ~\eqref{78:89ooo} holds with a constant which depends on $\lambda$, that is, on the ratio $\frac{r}{R}$, if $r>R$. 
Namely, if~$r>R$, \eqref{78:89ooo} may fail to be true for $C$ just depending on $n$.
\end{remark}

As a simple consequence of Theorem~\ref{ISOPER},
we also provide
the following nonlocal
Poincar\'e-Wirtinger inequality:

\begin{theorem}\label{PW}
There exists~$C>0$, only depending on~$n$, such that the following statement holds.
Let $\lambda\geq 1$, with $\lambda R\ge r>0$, and~$u\in L^\infty(B_r)\cap L^1(B_R)$.
Let
$$ \langle u \rangle_R := \frac{1}{ {\mathcal{L}}^n(B_R) }\int_{B_R} u.$$
Then,
\begin{equation} \label{89w1:NOATo}
\int_{B_R} \big|u - \langle u \rangle_R\big|\le \frac{CR\lambda}{ r}
\int_{B_R} \osc_{B_r(x)} u\,dx.\end{equation}
\end{theorem}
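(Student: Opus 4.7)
The plan is the classical derivation of a Poincaré–Wirtinger inequality from a (relative) isoperimetric inequality and a coarea formula, carried over to the present $r$-perimeter setting. I would first reduce to the median and then move to the mean at the end.

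First I would let $m$ be a median of $u$ on $B_R$, i.e., a value such that both sets $A_s^+:=\{u>m+s\}\cap B_R$ and $A_s^-:=\{u<m-s\}\cap B_R$ have Lebesgue measure at most $\tfrac12\mathcal{L}^n(B_R)$ for every $s\ge 0$. By the layer–cake formula
\[
\int_{B_R}|u-m|\,dx=\int_0^{+\infty}\bigl(\mathcal{L}^n(A_s^+)+\mathcal{L}^n(A_s^-)\bigr)\,ds.
\]
Because of the choice of $m$, the superlevel set $E_s^+:=\{u>m+s\}$ and the sublevel set $E_s^-:=\{u<m-s\}$ satisfy the smallness condition \eqref{178:89ooo}, so Theorem~\ref{ISOPER} applies. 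Using $\mathcal{L}^n(E\cap B_R)=\bigl(\mathcal{L}^n(E\cap B_R)\bigr)^{1/n}\cdot\bigl(\mathcal{L}^n(E\cap B_R)\bigr)^{(n-1)/n}$ and $\mathcal{L}^n(E\cap B_R)^{1/n}\le\omega_n^{1/n}R$, inequality \eqref{78:89ooo} upgrades to the linear estimate
\[
\mathcal{L}^n(E\cap B_R)\le CR\lambda\,\Per_r(E,B_R),
\]
which I would apply to $E=E_s^+$ and $E=E_s^-$.

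Next I would integrate in $s$ and invoke the coarea formula of Lemma~\ref{COAREA:L}. For the superlevel contribution,
\[
\int_0^{+\infty}\Per_r(E_s^+,B_R)\,ds=\int_m^{+\infty}\Per_r(\{u>t\},B_R)\,dt\le\int_{-\infty}^{+\infty}\Per_r(\{u>t\},B_R)\,dt=\frac{1}{2r}\int_{B_R}\osc_{B_r(x)}u\,dx.
\]
For the sublevel contribution I would use that, under our convention of identifying a set with its points of density one, $\partial E=\partial(\R^n\setminus E)$, hence $\Per_r(E,B_R)=\Per_r(\R^n\setminus E,B_R)$. Thus $\Per_r(E_s^-,B_R)=\Per_r(\{u\ge m-s\},B_R)=\Per_r(\{u>m-s\},B_R)$ for a.e.~$s$, and the same coarea argument yields the identical bound. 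Summing gives
\[
\int_{B_R}|u-m|\,dx\le \frac{CR\lambda}{r}\int_{B_R}\osc_{B_r(x)}u\,dx.
\]

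Finally I would upgrade from median to mean by the elementary inequality
\[
\bigl|\,m-\langle u\rangle_R\bigr|\,\mathcal{L}^n(B_R)=\Bigl|\int_{B_R}(u-m)\,dx\Bigr|\le\int_{B_R}|u-m|\,dx,
\]
which, combined with the triangle inequality, gives $\int_{B_R}|u-\langle u\rangle_R|\le 2\int_{B_R}|u-m|$, and absorbing the factor into $C$ yields \eqref{89w1:NOATo}. The only genuinely nontrivial ingredient is the invariance $\Per_r(E)=\Per_r(\R^n\setminus E)$, which is what makes the sublevel half of the argument symmetric to the superlevel half; everything else is a direct assembly of Theorem~\ref{ISOPER} and Lemma~\ref{COAREA:L}, so I do not anticipate a serious technical obstacle beyond checking that the isoperimetric inequality can indeed be used in the linearized form above with the prescribed dependence on $R$ and $\lambda$.
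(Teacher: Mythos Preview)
Your proposal is correct and follows essentially the same route as the paper: layer--cake representation, the relative isoperimetric inequality of Theorem~\ref{ISOPER} linearized via $\mathcal{L}^n(E\cap B_R)^{1/n}\le\omega_n^{1/n}R$, and then the coarea formula of Lemma~\ref{COAREA:L}. The only difference is in the normalization step: the paper subtracts the mean directly and, after possibly replacing $u$ by $-u$, exploits the identity $\int_{B_R}|u|=2\int_{B_R}u^+$ (valid when $\langle u\rangle_R=0$) so that only the superlevel sets $\{u^+>s\}$ need to be handled; you instead center at a median, treat super-- and sublevel sets separately via the complement invariance $\Per_r(E,\Omega)=\Per_r(\R^n\setminus E,\Omega)$, and convert median to mean at the end. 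Both reductions are standard and equivalent; the paper's trick is a touch slicker in that it avoids the sublevel half and the a.e.\ level--set argument, while yours is arguably more transparent about why the smallness hypothesis~\eqref{178:89ooo} is available on both sides.
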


\begin{remark}\label{NONPO}
When~$r>R$, the estimate~\eqref{89w1:NOATo} does not necessarily hold true 
with a constant independent of $\lambda$.
\end{remark}


We address now the density properties of the minimizers of~$\Per_r$.
Differently than the classical cases, the density properties
of the minimizers
may depend on the initial density for small scales: nevertheless,
we can obtain a density growth in larger balls, 
and the constants become uniform once a suitable density threshold
is reached. More precisely, our result is the following:

\begin{theorem}\label{DENSITY}
Let~$\Omega\subseteq\R^n$, $r>0$ and~$E$ be a minimizer for~$\Per_r$.
Let~$R_o>0$. Suppose that $B_{R_o}\subseteq\Omega$
and
\begin{equation}\label{GA0}
\omega_o:={\mathcal{L}}^n (E\cap B_{R_o})>0.\end{equation}
Let also~$k\in\N$ be such that~$B_{R_o+2kr}
\subseteq\Omega$. Then, 
\begin{equation}\label{DENS:EQ1}
{\mathcal{L}}^n(E\cap B_{R_o+2kr})\ge (\omega_o^{\frac{1}{n}}+ 2 c_{\star } k r)^{n},\end{equation}
for a suitable~$c_{\star }>0$, possibly depending on $n$,
$r$ and  $\omega_o$.

Moreover,
\begin{equation}\label{DENS:EQ2}
{\mbox{ if $n=1$, $c_{\star }$ is a pure number, independent
of~$r$ and~$\omega_o$.}}
\end{equation}
Also,
\begin{equation}\label{DENS:EQ3}
{\mbox{if $\omega_o\ge \underline{c}\, r^n$
for some~$\underline{c}>0$, then $c_{\star }$ only
depends on $n$ and $\underline{c}$,
and it is independent of~$\Omega$ and~$\omega_o$.}}
\end{equation}

In addition, if
\begin{equation}\label{DENS:EQ5}
{\mathcal{L}}^n(E\cap B_{R_o+2(k-1)r})\le \overline{C} r^n,\end{equation}
for some~$\overline{C}>0$, then
\begin{equation}\label{DENS:EQ4}
{\mathcal{L}}^n(E\cap B_{R_o+2(k-1)r})\ge 
\omega_o
\,\left(1+\widetilde c\right)^k,
\end{equation}
for some~$\widetilde c>0$, depending on~$n$ and~$\overline{C}$.
\end{theorem}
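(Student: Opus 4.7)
The plan is to establish by induction on $k$ a one-step density improvement, of either additive or multiplicative type, for
$$V_j := \mathcal{L}^n(E\cap B_{R_j}), \qquad R_j := R_o+2jr.$$
Iterating from $V_0=\omega_o$ will yield the two conclusions \eqref{DENS:EQ1} and \eqref{DENS:EQ4}, and the refined dependences in \eqref{DENS:EQ2}--\eqref{DENS:EQ3} are read off from the regime that applies at each step.

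The core ingredient is a discrete Caccioppoli-type inequality
$$ 2r\,\Per_r(E,B_{R_{k-1}})\,\leq\,V_k-V_{k-1}, $$
obtained by testing the minimality of $E$ in $B_{R_k}$ against $F:=E\setminus B_{R_k-r}$, which agrees with $E$ outside $B_{R_k}\ominus B_r = B_{R_k-r}$ and is therefore admissible. The topological boundary satisfies
$$\partial F\,\subseteq\,(\partial E\cap\overline{B_{R_k-r}}^{\,c})\cup(\partial B_{R_k-r}\cap\overline{E^1}).$$
Thickening by $B_r$, the first piece lies in $((\partial E)\oplus B_r)\cap(B_{R_k}\setminus B_{R_{k-1}})$ (using that any point within $r$ of $\overline{B_{R_k-r}}^{\,c}$ sits outside $B_{R_{k-1}}$, since $R_{k-1}=R_k-2r$), while for the second piece the identity $\overline{E^1}\oplus B_r\subseteq E\cup((\partial E)\oplus B_r)$ from \eqref{omega r} gives
$$ (\partial F\oplus B_r)\cap B_{R_k}\,\subseteq\,\bigl(E\cup((\partial E)\oplus B_r)\bigr)\cap(B_{R_k}\setminus B_{R_{k-1}}). $$
Taking $\mathcal{L}^n$, dividing by $2r$, and noting that the contribution of $(\partial E)\oplus B_r$ in the annulus appears identically on both sides of $\Per_r(E,B_{R_k})\leq \Per_r(F,B_{R_k})$ and cancels, one recovers the announced Caccioppoli inequality.

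Assuming next that $V_{k-1}\leq \mathcal{L}^n(B_{R_{k-1}})/2$ (the opposite case already producing a lower bound stronger than the target), Theorem~\ref{ISOPER} yields
$$ V_{k-1}^{(n-1)/n}\,\leq\,C\lambda_{k-1}\,\Per_r(E,B_{R_{k-1}}), \qquad \lambda_{k-1}=\max\{1,\,r/R_{k-1}\}, $$
and combining with the Caccioppoli step gives the reinforcement
$$ V_k-V_{k-1}\,\geq\,\frac{2r}{C\lambda_{k-1}}\,V_{k-1}^{(n-1)/n}. $$
From here two regimes branch. If $V_{k-1}^{1/n}\geq c_\star r$, the elementary inequality $(a+\alpha)^n\leq a^n+2^n a^{n-1}\alpha$ for $\alpha\leq a$ converts the additive gain into $V_k^{1/n}\geq V_{k-1}^{1/n}+c_\star r$ for sufficiently small $c_\star$, giving \eqref{DENS:EQ1} after iteration. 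If instead $V_{k-1}<(c_\star r)^n$, then $V_{k-1}^{-1/n}>(c_\star r)^{-1}$ and the Caccioppoli gain reads $V_k\geq V_{k-1}(1+\widetilde c)$, producing the exponential bound \eqref{DENS:EQ4}. The two regimes interlock neatly: the exponential regime, once entered, is exited in finitely many steps, after which polynomial growth takes over. The specialization \eqref{DENS:EQ2} to $n=1$ is immediate because $V^{(n-1)/n}\equiv 1$ removes any smallness threshold, and \eqref{DENS:EQ3} follows because $\omega_o\geq \underline c\,r^n$ keeps the iteration in the polynomial regime from the start, making $c_\star$ depend only on $n$ and $\underline c$.

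The main technical obstacle will be the covering step in the Caccioppoli inequality: it relies crucially on the fact that only points of $\partial B_{R_k-r}$ lying in $\overline{E^1}$ contribute to $\partial F$ there, which is what allows the thickening of that artificial spherical boundary to be charged by $E\cap(B_{R_k}\setminus B_{R_{k-1}})$ rather than by the whole shell; without the identity in \eqref{omega r} the argument would lose the factor $V_k-V_{k-1}$ and produce only the trivial $CR^{n-1}$ bound. A secondary subtlety, appearing when $R_{k-1}<r$, is that $\lambda_{k-1}>1$ degrades the per-step gain; this is absorbed into the dependence of $c_\star$ on $r$ and $\omega_o$ which is precisely the flexibility allowed by \eqref{DENS:EQ1}.
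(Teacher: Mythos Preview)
Your proposal is correct and follows essentially the same route as the paper: the same competitor $E\setminus B_{R_k-r}$ in $B_{R_k}$, the same covering of $(\partial F)\oplus B_r$ by $(E\cup((\partial E)\oplus B_r))$ in the annulus leading to the key inequality $2r\,\Per_r(E,B_{R_{k-1}})\le V_k-V_{k-1}$, and the same combination with the relative isoperimetric inequality to produce the recursion $V_k\ge V_{k-1}+\tfrac{2r}{C}V_{k-1}^{(n-1)/n}$. The only cosmetic difference is that the paper runs the induction directly on the quantity $(\omega_o^{1/n}+2c_\star kr)^n$ with an explicit choice $c_\star=\bigl[C(n+2(n-1)r/(C\omega_o^{1/n}))\bigr]^{-1}$ and a Taylor-type estimate, whereas you convert the recursion into the one-step increment $V_k^{1/n}\ge V_{k-1}^{1/n}+c_\star r$ in the regime $V_{k-1}\ge (c_\star r)^n$; both reach the same conclusions, and your treatment of the case $R_{k-1}<r$ via $\lambda_{k-1}$ is in fact slightly more careful than the paper's.
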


It is interesting to point out that
Theorem~\ref{DENSITY} detects two scales of growth
(and this fact is different from the case of the classical
minimal surfaces, as well as of the nonlocal minimal surfaces
in~\cite{MR2675483}, where there is only
one type of growth, given by the dimension of the space).
Indeed, in our framework,
if the initial density is below the threshold prescribed
by~$r^n$ (as stated in~\eqref{DENS:EQ5}),
then there is an exponential density growth
(as stated in~\eqref{DENS:EQ4}), till the density reaches
the quantity~$r^n$.
Then, once a density of order~$r^n$ is reached,
the growth reduces to the usual one, that is the radius
to the power~$n$ (as stated in~\eqref{DENS:EQ1}).
In such case of polynomial growth
away from an initial~$r^n$, the constant
become uniform (as stated in~\eqref{DENS:EQ3},
being the onedimensional case special, in view
of~\eqref{DENS:EQ2}).

We think that it would be interesting to establish whether or not
the growth in~\eqref{DENS:EQ4} is optimal or if sharper estimates
may be obtained independently on the initial density.


Finally it is interesting to remark that compactness and regularity
properties related to~$\Per_r$ can be problematic, or even fail,
at a small scale, also for minimizers.
To make a concrete example, we consider~$K>0$ and 
the function $g(x):=-K \chi_{B_r\setminus B_{r/2}}$.
We let
$$ {\mathcal{F}}_{K}(E):={\mathcal{F}_{r,g}}(E)= \Per_r(E)
-K\,{\mathcal{L}}^n\big( E\cap(B_r\setminus B_{r/2}
)\big).
$$
Then, minimizers are not necessarily smooth and sequences
of minimizers are not necessarily compact. Indeed, we have:

\begin{theorem}\label{FAIL:COM}
There exists~$C>0$, only depending on~$n$, for which the
following statement holds true.

Suppose that
\begin{equation}\label{K LARGE}
K\ge\frac{C}{r}.
\end{equation}
Then, there exists~$E_*\subseteq\R^n$ satisfying
$$ {\mathcal{F}}_K(E_*)\le {\mathcal{F}}_K(E)$$
for any bounded set~$E\subseteq\R^n$,
and such that~$\partial E_*$ is not
locally a continuous graph (and, in fact, can be ``arbitrarily bad''
inside~$B_{r/2}$).

Moreover, there exists
a sequence~$E_k\subseteq\R^n$ satisfying
$$ {\mathcal{F}}_K(E_k)\le {\mathcal{F}}_K(E)$$
for any bounded set~$E\subseteq\R^n$,
and such that~$\chi_{E_k}$ is not
precompact in~$L^1(B_r)$.
\end{theorem}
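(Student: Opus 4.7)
The plan is to exhibit a minimizer of the form $E_* := \R^n\setminus A$ for an arbitrary measurable set $A\subseteq B_{r/2}$. Since $\partial E_* = \partial A$ lies in $\overline{B_{r/2}}$, we get $\partial E_*\oplus B_r\subseteq B_{3r/2}$ and therefore
\[ \Per_r(E_*) = \Per_r(A) \le \frac{\mathcal{L}^n(B_{3r/2})}{2r} = \frac{(3/2)^n\,\omega_n\, r^{n-1}}{2} =: M. \]
Moreover $E_*\supseteq B_r\setminus B_{r/2}$, so the volume term attains its minimum, giving $\mathcal{F}_K(E_*)\le M - K(1-2^{-n})\omega_n r^n$. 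The goal is to prove the matching lower bound $\mathcal{F}_K(E)\ge M - K(1-2^{-n})\omega_n r^n$ for every bounded $E$ when $K\ge C/r$ with $C=C(n)$ large; the two inequalities together make $E_*$ a global minimizer among bounded competitors, for every $A\subseteq B_{r/2}$.

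Setting $\delta_E := (1-2^{-n})\omega_n r^n - \mathcal{L}^n(E\cap(B_r\setminus B_{r/2}))\ge0$, the desired lower bound is equivalent to $\Per_r(E) + K\delta_E \ge M$. For $E$ of positive measure, the isoperimetric inequality of Lemma~\ref{ISOR} gives $\Per_r(E)\ge\Per_r(B_{R_E})$ with $R_E:=(\mathcal{L}^n(E)/\omega_n)^{1/n}$, while trivially $\delta_E\ge\omega_n\max\{0,(s^*)^n - R_E^n\}$ with $s^*:=(1-2^{-n})^{1/n}r$. Combining, $\Per_r(E)+K\delta_E\ge\psi(R_E)$, where $\psi(R):=\Per_r(B_R) + K\omega_n\max\{0,(s^*)^n-R^n\}$. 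The explicit derivative $\psi'(R)=n\omega_n[(R+r)^{n-1}/(2r) - KR^{n-1}]$ on $(0,s^*)$ goes from positive to negative (its unique zero being a local maximum for $Kr$ large), so $\min_{(0,s^*]}\psi$ is attained at an endpoint, and $\psi$ is monotone increasing on $[s^*,\infty)$; consequently $\min_{R>0}\psi = \psi(s^*) = \alpha_n\omega_n r^{n-1}$ with $\alpha_n:=((1-2^{-n})^{1/n}+1)^n/2$. The crucial bound $\alpha_n\omega_n r^{n-1}\ge M$ reduces to $(1-2^{-n})^{1/n}\ge 1/2$, which holds for every $n\ge 1$. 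Finally, the residual case $E=\varnothing$ gives $\Per_r+K\delta=K(1-2^{-n})\omega_n r^n$, and requiring this to dominate $M$ fixes the threshold $C = C(n)$ in $K\ge C/r$.

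For the first assertion of the theorem, choose $A\subseteq B_{r/2}$ with pathological boundary (e.g., a countable disjoint union of small balls accumulating at an interior point, or a set with fractal boundary): the above argument shows $E_*=\R^n\setminus A$ is a minimizer, and $\partial E_*=\partial A$ inherits the pathology inside $B_{r/2}$. For the non-precompactness assertion, identify $B_{r/2}$ with $[0,1]$ through a measure-preserving bijection and let $A_k\subseteq B_{r/2}$ correspond to the positivity set of the $k$-th Rademacher function; then $\|\chi_{A_k}-\chi_{A_j}\|_{L^1(B_{r/2})}\ge c\,r^n>0$ for $k\neq j$. Each $E_k:=\R^n\setminus A_k$ is a minimizer by the same argument, and since $\chi_{E_k}|_{B_r}=\chi_{B_r}-\chi_{A_k}$, no subsequence of $\{\chi_{E_k}\}$ converges in $L^1(B_r)$. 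The most delicate step is the $\psi$-analysis in the previous paragraph, where one must exclude any interior dip of $\psi$ below $M$ on $(0,s^*)$; this is handled by the explicit form of $\psi'$ and the identification of its unique interior critical point as a maximum.
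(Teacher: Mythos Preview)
Your argument is correct, but it follows a genuinely different route from the paper. The paper works with \emph{bounded} candidates $E_U:=(B_r\setminus B_{r/2})\cup U$ for arbitrary $U\subseteq B_{r/2}$; these all satisfy $(\partial E_U)\oplus B_r=B_{2r}$, hence share the common energy $2^{n-1}\omega_n r^{n-1}-K(1-2^{-n})\omega_n r^n$. The paper then proves this is the exact infimum over bounded sets via a case split on $\mathcal{L}^n(E\cap(B_r\setminus B_{r/2}))$ and a geometric lemma showing that, once $E\cap B_r$ nearly fills the annulus, one can fill in $B_{r/2}$ without increasing $\Per_r$; the resulting set is compared to balls $B_\rho$ through an auxiliary function $\Phi(\rho)$. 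By contrast, you use \emph{unbounded} candidates $E_*=\R^n\setminus A$ with $A\subseteq B_{r/2}$ and establish only the weaker lower bound $\mathcal{F}_K(E)\ge M-K(1-2^{-n})\omega_n r^n$ with $M=(3/2)^n\omega_n r^{n-1}/2<2^{n-1}\omega_n r^{n-1}$, but this suffices because your $E_*$ already undercuts $M$. Your $\psi$-analysis (isoperimetric inequality plus one-variable calculus on $\psi(R)=\Per_r(B_R)+K\omega_n\max\{0,(s^*)^n-R^n\}$) is cleaner and bypasses the paper's geometric lemma entirely; the key numerical fact $(1-2^{-n})^{1/n}\ge 1/2$ is what makes your weaker bound still good enough. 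Two minor remarks: for $n=1$ the derivative $\psi'$ is constant rather than having a critical point, but your endpoint conclusion survives; and ``measure-preserving bijection'' between $B_{r/2}$ and $[0,1]$ should be read as transporting the Rademacher construction (e.g.\ radially), not as a literal bijection. The trade-off is that the paper pins down the actual infimum among bounded competitors and exhibits bounded minimizers, while your approach is shorter but produces only unbounded witnesses for the inequality $\mathcal{F}_K(E_*)\le\mathcal{F}_K(E)$.
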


Given the negative result in Theorem~\ref{FAIL:COM}, we think that
it is an interesting problem to develop a regularity theory for
minimizers of~$\Per_r$ and of functionals such as~${\mathcal{F}_{r,g}}$.


\subsection{Planelike minimizers in periodic media}
\label{Existence of planelike}

In the spirit of~\cite{MR1852978}, we recall the following definition:

\begin{definition}\label{PLANELIKE}  We say that a set $E\subseteq \R^n$ is planelike if, up to an
appropriate 
change of coordinates, there exists  $K>0$ such that  
\[E\supseteq \{(x_1, \dots, x_n) {\mbox{ s.t. }} 
x_n\leq 0\}\quad {\mbox{ and }}\quad \R^n\setminus E\supseteq
\{(x_1, \dots, x_n) {\mbox{ s.t. }} x_n\geq K\}.\]
\end{definition}

To state our result, we recall some notation. 
We say that a direction~$\omega\in S^{n-1}$ is rational
if the orthogonal space has maximal dimension over the integers, i.e.
\begin{equation}\label{RAT}\begin{split}&
{\mbox{there exist $K_1,\dots,K_{n-1}\in\Z^n$ which are linearly independent}}\\&
{\mbox{and such that~$\omega\cdot K_j=0$ for any~$j\in\{1,\dots,n-1\}$.}}\end{split}\end{equation}
Given a rational direction~$\omega\in S^{n-1}$, we say that a set~$E$
is~$\omega$-periodic if, for any~$k\in\Z^n$ with~$\omega\cdot k=0$,
we have that~$E+k=E$. Similarly, a function~$u:\R^n\to\R$
is said to be~$\omega$-periodic if, for any~$k\in\Z^n$ with~$\omega\cdot k=0$,
it holds that~$u(x+k)=u(x)$ for any~$x\in\R^n$.

Then, we state the following:

\begin{theorem}\label{PLANELIKE:TH}
There exist~$\eta\in(0,1)$ and~$M>1$, only depending on~$n$, such that the following result
holds true.
Let~$r\in(0,1)$, $g:\R^n\to\R$ be~$\Z^n$-periodic, with zero average in~$[0,1]^n$
and such that~$\|g\|_{L^\infty(\R^n)}\le\eta$.

Let~$\omega\in S^{n-1}$. Then, there exists~$E^*_\omega$
which is a Class~A minimizer for~$ {\mathcal{F}}_{r,g}$,
such that
\begin{equation}\label{PL:FOR}
\{\omega\cdot x\le -M\}\subseteq  E^*_\omega\subseteq
\{\omega\cdot x\le M\}
.\end{equation}

Moreover, if $\omega$ is rational, then $E^*_\omega$  is~$\omega$-periodic.
\end{theorem}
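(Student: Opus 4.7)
My plan is to follow the Caffarelli--de la Llave planelike minimizer construction adapted to the $r$-perimeter. First, for a rational direction $\omega\in S^{n-1}$, let $\Lambda_\omega:=\{k\in\Z^n:k\cdot\omega=0\}$, a rank $n-1$ lattice in $\omega^\perp$ with fundamental domain $D$. For $M>1$ to be chosen later (depending only on $n$), consider
\[\mathcal{A}_M:=\bigl\{E\subseteq\R^n:E\text{ is }\Lambda_\omega\text{-periodic},\ \{x\cdot\omega\le-M\}\subseteq E\subseteq\{x\cdot\omega\le M\}\bigr\},\]
and set $\Omega_M:=D\times(-M-r,M+r)$ in the splitting $\R^n=\omega^\perp\oplus\R\omega$. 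Since $\Omega_M$ is bounded, the direct method combined with Theorem~\ref{SICOM:01}(3) and the $L^1_{\mathrm{loc}}$ lower semicontinuity of $\Per_r$ produces a minimizer $E^*_\omega\in\mathcal{A}_M$ of $\mathcal{F}_{r,g}(\cdot,\Omega_M)$; $\Lambda_\omega$-periodicity is preserved by projection onto the fundamental cell.

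The heart of the argument is to show that, for $\eta$ small and $M$ large (both depending only on $n$), $E^*_\omega$ satisfies \eqref{PL:FOR} with a $2r$-buffer from each constraint plane. Set
\[v(t):=\mathcal{L}^n\bigl(E^*_\omega\cap\{x\cdot\omega>t\}\cap(D\times\R)\bigr),\]
nonincreasing in $t$, with $v(-M)$ bounded and $v(M)=0$. For $t\in(0,M)$, test $E^*_\omega$ against $F_t:=E^*_\omega\cap\{x\cdot\omega\le t\}\in\mathcal{A}_M$; a direct decomposition of the tubular neighborhood in \eqref{per r} gives
\[\Per_r(F_t,\Omega_M)\le\Per_r(E^*_\omega,\{x\cdot\omega<t-r\}\cap\Omega_M)+|D|,\]
where $|D|$ accounts for the new flat top slab of width $2r$. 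Minimality together with $\|g\|_\infty\le\eta$ then yields
\[\Per_r(E^*_\omega,\{x\cdot\omega>t\}\cap\Omega_M)\le|D|+\eta\,v(t).\]
Feeding this into the relative isoperimetric inequality of Theorem~\ref{ISOPER} (applicable once $v(t)\le\tfrac12|D|M$, at scale $\lambda\sim M/r$) and using the density estimates of Theorem~\ref{DENSITY} to enforce the thickness of residual mass at scale $r$, I iterate on slabs of width $\sim r$ to obtain $v(t+cr)\le v(t)-c'$ as long as $v(t)$ exceeds an $r$-independent threshold. This forces $v(t)\equiv 0$ for $t\ge M_0$ with $M_0$ depending only on $n$ and $\eta$; taking $M:=M_0+2r$ (which remains universally bounded since $r\in(0,1)$) seals the upper buffer, and the bottom inclusion is symmetric.

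To upgrade $E^*_\omega$ to a Class A minimizer, given any ball $B_R$ and competitor $F$ with $F\triangle E^*_\omega\Subset B_R$, I rerun the preceding with $M$ replaced by $M+R$ so that $B_R$ sits in the plane-like buffer zone. Periodize $F$ by setting $F^{\mathrm{per}}:=F$ on the fundamental cell containing $B_R$ and $F^{\mathrm{per}}:=E^*_\omega$ on all other cells, then extend $\Lambda_\omega$-periodically; the submodularity of $\Per_r$ recalled in the Introduction handles any cells through which $B_R$ crosses, and the inequality $\mathcal{F}_{r,g}(E^*_\omega,\Omega_M)\le\mathcal{F}_{r,g}(F^{\mathrm{per}},\Omega_M)$ localizes by $\Lambda_\omega$-periodicity to $\mathcal{F}_{r,g}(E^*_\omega,B_R)\le\mathcal{F}_{r,g}(F,B_R)$. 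For irrational $\omega$, pick rational $\omega_j\to\omega$, build $E^*_{\omega_j}$ as above with uniform plane-like bounds, and pass to the limit using the compactness in Theorem~\ref{SICOM:01}(3) and the minimizer-stability statement of Theorem~\ref{promin}; the plane-like inclusions pass to the $L^1_{\mathrm{loc}}$ limit directly.

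The hard part will be the decay analysis for $v$ above. Classically one obtains a clean one-variable inequality $v'\le-cv^{(n-1)/n}$, but the $r$-perimeter mixes two scales: below density $r^n$, Theorem~\ref{DENSITY} only yields exponential growth in $r$-units, while Theorem~\ref{ISOPER} requires scale $\gtrsim r$ and density $\le\tfrac12$. Matching these two regimes so that all constants remain $r$-independent for $r\in(0,1)$, and so that the smallness $\eta$ of $g$ absorbs the polynomial correction $\eta v$ in the cutting inequality uniformly in $M$, is the delicate calibration that allows the constant $M$ in \eqref{PL:FOR} to be chosen universally in terms of $n$.
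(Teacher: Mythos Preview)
Your overall framework---constrained periodic minimization for rational $\omega$, followed by rational approximation for irrational $\omega$ via Theorem~\ref{promin}---matches the paper. However, there are two genuine gaps.

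\textbf{Missing the minimal minimizer.} You take \emph{a} minimizer $E^*_\omega\in\mathcal{A}_M$ via the direct method; the paper instead takes the \emph{minimal minimizer} $E^*_{\omega,M}:=\bigcap_{E\in\mathcal{M}_{\omega,M}}E$, and this is not cosmetic. The minimal minimizer enjoys the Birkhoff monotonicity: $E^*_{\omega,M}+k\subseteq E^*_{\omega,M}$ whenever $k\in\Z^n$ with $\omega\cdot k\le0$. This yields an ordered layer structure for the ``color'' of unit cubes (black, almost black, multicolor, almost white, white) which is what makes the strip-width estimates tractable; a generic minimizer need not have any such ordering. More critically, Birkhoff plus strict detachment from the constraint planes gives the stability $E^*_{\omega,M}=E^*_{\omega,M_0}$ for all $M\ge M_0$, and this is precisely what upgrades constrained minimality to Class~A minimality. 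Your ``rerun the preceding with $M$ replaced by $M+R$'' does not work as written: the minimizer for the enlarged strip may well be a different set, and your periodization of a local competitor $F$ across fundamental cells neither produces an admissible element of $\mathcal{A}_M$ when $B_R$ meets several cells, nor localizes cleanly back to $B_R$ without the minimal-minimizer machinery.

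\textbf{The strip-width estimate.} Your one-variable decay scheme for $v(t)$ is not the paper's argument and, as sketched, does not close. Theorem~\ref{ISOPER} is stated only in balls, with constant $C\lambda$ where $\lambda\sim M/r$; feeding $\Per_r(E^*_\omega,\{x\cdot\omega>t\}\cap\Omega_M)\le|D|+\eta\,v(t)$ into it at that scale yields nothing useful, and the iteration $v(t+cr)\le v(t)-c'$ you write would in any case take $\sim v(0)/c'\sim M|D|$ steps of size $r$, not a universal number. The paper instead works at unit scale: it classifies the cubes $j+[0,n]^n$ by density and uses the energy bound against the half-space competitor to conclude that \emph{multicolor} cubes (those with $\min\{\mathcal{L}^n(Q\cap E^*),\mathcal{L}^n(Q\setminus E^*)\}\ge r^n$, where Theorem~\ref{DENSITY} gives uniform density and hence unit-scale perimeter) occupy a strip of uniformly bounded width, while \emph{almost-white} and \emph{almost-black} cubes at first only lie in strips of width $\le C/r^{n-1}$. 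The genuinely new step---absent from classical Caffarelli--de~la~Llave and from your sketch---is a second cut at the bottom of the almost-white layer: the $r$-perimeter gain in each removed almost-white cube is $\ge cr^{n-1}$, the $r$-perimeter cost of the new flat piece is $\le C|D|\,r^{n-1}$, and the bulk error is $\le C\eta r^n$ per cube, forcing the almost-white width $W_r$ to satisfy $C-cW_r+C\eta rW_r\ge0$, hence $W_r\le C$ uniformly in $r\in(0,1)$. The Birkhoff layering is what makes this cube accounting coherent; without it, almost-white cubes need not sit in a single strip above the multicolor ones, and the cut does not cleanly remove them.
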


\subsection{Organization of the paper}\label{sec:org}

The rest of the paper is devoted to the proofs
of our main results.
Section~\ref{SRZ:2} contains the
proofs of  Proposition~\ref{TRIVIA}.

The $\Gamma$-convergence results and the
compactness properties for
the functional $\mathcal{F}_{r,g}$, together with the proofs of
Theorem~\ref{SICOM:01},
Remarks \ref{NOCOM} and~\ref{REML01293dfas},
and
Theorem~\ref{promin}, are presented in Section~\ref{SD:3}.

The proof of Theorems~\ref{DEBP}
is  contained in Section~\ref{P012wsPA}.

The characterizations
of Class~A minimizers in Proposition \ref{piani} and in
Theorem~\ref{DI1PR}   are dealt with
in Section~\ref{023peasq}.

We address the isoperimetric inequalities in Section~\ref{00232},
which contains the proofs of Lemma~\ref{ISOR},
Lemma~\ref{lemmatecnico},
Theorem~\ref{ISOPER},
Remark~\ref{FAIL},
Theorem~\ref{PW} and
Remark~\ref{NONPO}. 

The regularity and density estimates, with the proofs of
Theorems~\ref{DENSITY}
and~\ref{FAIL:COM}, are discussed in Section~\ref{91238erytfgdh}.

Finally, in Section~\ref{SA:1023}, we deal with the
construction of the
planelike minimizers in periodic media and we prove
Theorem~\ref{PLANELIKE:TH}.


\section{Basic properties 
of minimizers of ${\Per_r}$  --
Proof of
Proposition~\ref{TRIVIA}} \label{SRZ:2}

We provide the proof of Proposition \ref{TRIVIA}, which
justifies our
definitions of local and Class~A  minimizers, given in Definition~\ref{DEFIN4}.

\begin{proof}[Proof of Proposition~\ref{TRIVIA}]
First of all, we claim that there exists
a universal~$\e>0$ such that
\begin{equation}\label{GEOGEO}
\left\{\left[\left(\R^n\setminus B_{1/\e}\left(\frac{e_n}{\e}\right) \right)
\cap \left(\R^n\setminus B_{1-\e}\right)
\right]\oplus B_1\right\}\cap B_\e(e_n)=\varnothing. 
\end{equation}
A picture can easily convince the   reader
about this simple geometric fact.

{F}rom now, we fix~$\e$ as in~\eqref{GEOGEO} and,
without loss of generality, we take~$\e\in\left(0,\frac12\right]$. As a matter of
fact, scaling~\eqref{GEOGEO}, we see that
\begin{equation*}
\left\{\left[\left(\R^n\setminus B_{r/\e}\left(\frac{r e_n}{\e}\right) \right)
\cap \left(\R^n\setminus B_{(1-\e)r}\right)
\right]\oplus B_r\right\}\cap B_{\e r}(r e_n)=\varnothing
\end{equation*}
and therefore
\begin{equation}\label{GEOGEO:BU}
\begin{split}&
{\mathcal{L}}^n\left(
\left\{\left[\left(\R^n\setminus B_{r/\e}\left(\frac{r e_n}{\e}\right) \right)
\cap \left(\R^n\setminus B_{(1-\e)r}\right)
\right]\oplus B_r\right\}\cap B_r
\right) \\&\qquad\le
{\mathcal{L}}^n\big( B_r\setminus B_{\e r}(r e_n)\big)<
{\mathcal{L}}^n(B_r).\end{split}
\end{equation}
Now we take~$E$ to be a Class~A minimizer
for~$\Per_r$
and we claim that
\begin{equation}\label{ISP01}
\begin{split}
&{\mbox{either there exists~$p\in\R^n$ such that~$B_{r/\e}(p)\subseteq E$,}}\\
&{\mbox{or there exists~$p\in\R^n$ such 
that~$B_{r/\e}(p)\subseteq \R^n\setminus E$.}}\end{split}
\end{equation}
The proof of~\eqref{ISP01}
is by contradiction: if not, any ball of radius~$r/\e$ contains
both points of~$E$ and of its complement, and so it contains
at least one point of~$\partial E$.

Let now~$M\ge10$ to be taken suitably large in the sequel and~$R:=Mr/\e$.
We consider $N$ disjoint balls of radius~$2r/\e$
contained in the ball~$B_R$,
and we observe that we can take~$N\ge \frac{c\,R^n}{(r/\e)^n}
=c M^n$, for some universal~$c>0$.
Let us call~$B_{2r/\e}(p_1),\dots,B_{2r/\e}(p_N)$ such balls.
We know that each ball~$B_{r/\e}(p_j)$ contains a point~$q_j\in\partial E$
and so~$(\partial E)\oplus B_r$
contains at least the balls~$B_r(q_j)$ which are disjoint and contained in~$B_R$.

Consequently,
\begin{equation}\label{9dufihvyewdjkasgcbicsddg}
2r\,\Per_r(E,B_R)\ge \sum_{j=1}^N {\mathcal{L}}^n (B_r(q_j))=
N {\mathcal{L}}^n (B_r)\ge \bar c M^n r^n,
\end{equation}
for some~$\bar c>0$.

Now we consider~$F:=E\cup B_{R-r}$.
Notice that~$\partial F\subseteq \R^n\setminus B_{R-r}$
and thus~$(\partial F)\oplus B_r\subseteq \R^n\setminus B_{R-2r}$.
This and
the minimality of~$E$ give that
$$ 2r\,\Per_r(E,B_R)\le 2r\,\Per_r(F,B_R)\le {\mathcal{L}}^n (B_{R}\setminus B_{R-2r})
\le C R^{n-1} r = \frac{CM^{n-1} r^{n}}{\e^{n-1}}.$$
{F}rom this and~\eqref{9dufihvyewdjkasgcbicsddg}
a contradiction easily follows by taking~$M$ appropriately large
(possibly also in dependence of the fixed~$\e$).
This completes the proof of~\eqref{ISP01}.

Now, from~\eqref{ISP01},
we can suppose that~$E$ contains 
a ball of radius~${r/\e}$ and we prove that~$E=\R^n$
(if instead~$\R^n\setminus E$ 
contains 
a ball of radius~$B_{r/\e}$, a similar argument would prove that~$E$
is void).

Sliding the ball till it touches the boundary of~$E$,
we find a ball of radius~$r/\e$ which lies in~$E$ and
whose boundary contains a point of~$\partial E$.
Therefore, up to a rigid motion, we can suppose that~$0\in\partial E$
and~$B_{r/\e}(re_n/\e)\subseteq E$.
We define
$$G:= E\cup B_{(1-\e)r}\supseteq B_{r/\e}(re_n/\e)\cup B_{(1-\e)r}.$$
Notice that
$$\partial G\subseteq \R^n\setminus\big(
B_{r/\e}(re_n/\e)\cup B_{(1-\e)r}\big)=
\big( \R^n\setminus B_{r/\e}(re_n/\e)\big)\cap
\big( \R^n\setminus B_{(1-\e)r}\big)$$
and so
$$(\partial G)\oplus B_r\subseteq \Big[ 
\big( \R^n\setminus B_{r/\e}(re_n/\e)\big)\cap
\big( \R^n\setminus B_{(1-\e)r}\big)\Big]\oplus B_r.$$
This, \eqref{GEOGEO:BU} and the minimality of~$E$ give that
\begin{equation}\label{9ischyeiwvdk2345fhd}
2r\,\Per_r(E,B_r)\le 2r\,\Per_r(G,B_r) < {\mathcal{L}}^n(B_r).
\end{equation}
On the other hand, since~$0\in\partial E$, we have that~$
(\partial E)\oplus B_r\supseteq B_r$
and therefore
$$ 2r\,\Per_r(E,B_r) \ge {\mathcal{L}}^n(B_r).$$
This is in contradiction with~\eqref{9ischyeiwvdk2345fhd}.
The proof of Proposition~\ref{TRIVIA}
is thus complete.
\end{proof}

\section{$\Gamma$-convergence results and compactness properties for  the functional $\mathcal{F}_{r,g}$ -- Proofs of
Theorem~\ref{SICOM:01},
Remarks \ref{NOCOM} and~\ref{REML01293dfas},
and
Theorem~\ref{promin}} \label{SD:3}


We start with a preliminary result on the convergence of characteristic functions.

\begin{lemma}\label{lemchar}
Let $\Omega$ be an open subset of $\R^n$ and let $E_k$ be a sequence of sets such that 
$\chi_{E_k}$ converges to $u$ weakly in $L^1_{\rm loc}(\Omega)$. Then, letting 
$\Sigma:=\{x\in\Omega:\,u(x)\in (0,1)\}$, there holds
\begin{equation}\label{annabelve}
\chi_{E_k}\to u\quad \text{in }L^1(\Omega\setminus\Sigma).
\end{equation}
In particular, if $u$ is a characteristic function, then $\chi_{E_k}\to u$ in $L^1_{\rm loc}(\Omega)$. 
\end{lemma}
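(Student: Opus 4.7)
The plan is to exploit that $u$ takes only the extreme values $0$ and $1$ on $\Omega\setminus\Sigma$, combined with $\chi_{E_k}\in\{0,1\}$, to upgrade weak convergence to strong convergence there. The key mechanism is: if $f_k$ converges weakly to $f$ with both $f_k,f$ bounded by $1$, then on any region where $f\equiv 0$ the nonnegativity of $f_k$ forces strong convergence, and symmetrically on $\{f\equiv 1\}$ by looking at $1-f_k$.

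First, I would decompose $\Omega\setminus\Sigma$ (up to a null set) into the disjoint measurable pieces $A_0:=\{u=0\}\cap\Omega$ and $A_1:=\{u=1\}\cap\Omega$, observing that on $A_0$ one has $|\chi_{E_k}-u|=\chi_{E_k}$ while on $A_1$ one has $|\chi_{E_k}-u|=1-\chi_{E_k}$. Then I would fix an arbitrary compact set $K\Subset\Omega$ and test the weak $L^1_{\rm loc}(\Omega)$ convergence of $\chi_{E_k}$ against the two bounded, compactly supported functions $\chi_{K\cap A_0}$ and $\chi_{K\cap A_1}$ (legitimate test functions, since they lie in $L^\infty$ with support in the compact set $K\subset\Omega$). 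This yields
\[
\int_{K\cap A_0}\chi_{E_k}\,dx\;\longrightarrow\;\int_{K\cap A_0} u\,dx=0
\qquad\text{and}\qquad
\int_{K\cap A_1}(1-\chi_{E_k})\,dx\;\longrightarrow\;0,
\]
and summing gives $\chi_{E_k}\to u$ in $L^1\bigl(K\cap(\Omega\setminus\Sigma)\bigr)$ for every compact $K\Subset\Omega$.

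To upgrade this to $L^1(\Omega\setminus\Sigma)$ I would exhaust $\Omega$ by an increasing sequence of compact sets $K_j\Subset\Omega$ with $\mathcal{L}^n(\Omega\setminus K_j)\to 0$ (which applies in the relevant setting where $\Omega\setminus\Sigma$ has finite measure, e.g.\ for bounded $\Omega$), and use $|\chi_{E_k}-u|\le 1$ to bound the tail contribution by $\mathcal{L}^n(\Omega\setminus K_j)$. A standard $\e$-argument closes the estimate. The ``in particular'' statement then follows immediately: if $u$ itself is a characteristic function, then $\Sigma$ has null measure, hence $\Omega\setminus\Sigma=\Omega$ up to null sets, and the convergence on compact subsets recovers exactly $L^1_{\rm loc}(\Omega)$ convergence.

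The main obstacle, more a bookkeeping issue than a genuine difficulty, is justifying that the characteristic function of a merely measurable (not open or closed) set like $K\cap A_0$ is an admissible test function against the weak $L^1_{\rm loc}$ limit; this is resolved by recalling that the test class for weak $L^1_{\rm loc}$ convergence is $L^\infty$ functions with compact essential support, with no regularity required. Beyond that the argument is a short manipulation of nonnegative quantities.
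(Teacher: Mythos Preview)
Your proposal is correct and follows essentially the same approach as the paper: split $\Omega\setminus\Sigma$ into $\{u=0\}$ and $\{u=1\}$, and use that weak convergence of the nonnegative functions $\chi_{E_k}$ and $1-\chi_{E_k}$ against the corresponding indicator test functions forces their integrals to vanish. The only cosmetic difference is that the paper replaces your compact-exhaustion step by a one-line ``without loss of generality $\Omega$ is bounded'' reduction.
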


\begin{proof}
Without loss of generality we can assume that $\Omega$ is bounded.

Let $u_k:=\chi_{E_k}$.
Since~$0\le u_k\le1$, we have that
\begin{eqnarray*}&&
\lim_{k\to+\infty}
\int_{\Omega\setminus\Sigma}|u_k-u|=
\lim_{k\to+\infty}\left(
\int_{\Omega\cap\{u=1\}}(1-u_k)+
\int_{\Omega\cap\{u=0\}}u_k\right)\\&&\qquad=
\lim_{k\to+\infty} \left({\mathcal{L}}^n\big(\Omega\cap\{u=1\}\big)-
\int_{\Omega}u_k\chi_{\{u=1\}}+
\int_{\Omega}u_k\chi_{\{u=0\}}\right)\\&&\qquad=
{\mathcal{L}}^n\big(\Omega\cap\{u=1\}\big)-
\int_{\Omega}u\chi_{\{u=1\}}+
\int_{\Omega}u\chi_{\{u=0\}}=0,
\end{eqnarray*}
which proves~\eqref{annabelve}.
\end{proof}

\begin{remark}\label{lemcosy} Note that a consequence of the previous lemma is the following fact:
if  $F_k$ is a sequence of sets such that 
$\chi_{F_k}\to \chi_F$ in $L^1_{\rm loc}(\Omega)$, for some open set $\Omega$  and
for some $F\subset\R^n$ then   $\chi_{\lambda_k F_k}\to \chi_F$ in $L^1_{\rm loc}(\Omega)$ for all $\lambda_k\to 1$. 
Indeed it is sufficient to prove that   $\chi_{\lambda_kF_k}$ 
converges to $\chi_F$ weakly in $L^1_{\rm loc}(\Omega)$ and then apply the previous lemma.
\end{remark}


We now provide the proof of the convergence result for $\Per_r$. 

\begin{proof}[Proof of Theorem~\ref{SICOM:01}]
First of all, we prove the claim in~$(1)$. 
For this, we observe that, for every $r>0$, it holds that
\begin{equation}\label{THIS1}
\mathcal{L}^n\Big(\partial \big((\partial E)\oplus B_r\big)\Big)=0.\end{equation}
This can be obtained e.g. as a consequence of the estimate 
proved in~\cite[Theorem 2]{MR3544940}: for all closed sets~$A$ 
and all $r>0$, it holds that
$\mathcal{H}^{n-1} (\partial (A\oplus B_r))\leq \frac{C}{r} 
\mathcal{L}^{n}\big((A\oplus B_r)\setminus A\big)$, where $C>0$ 
is a dimensional constant. Using this with~$A:=A_m=(\partial E)\cap B_m$,
for any fixed~$m\in\N$, we find that~$
\mathcal{H}^{n-1} (\partial (A_m\oplus B_r))<+\infty$, and therefore
\begin{equation}\label{9iwduef9igfwey3r00}
\mathcal{L}^{n} \left(\bigcup_{m\in\N}\partial (A_m\oplus B_r)\right)=0.
\end{equation}
We conclude observing that 
\begin{equation}\label{9iwduef9igfwey3r}
\partial \big((\partial E)\oplus B_r\big)
\subseteq\bigcup_{m\in\N}
\partial (A_m\oplus B_r).
\end{equation}
%
%

Using~\eqref{THIS1}, we see that $\chi_{(\partial E)\oplus B_{r_k}}\to 
\chi_{ (\partial E)\oplus B_r}$ a.e. in $\Omega$.

Hence, if~$\Omega$ is bounded, or if~$\Omega=\R^n$ and~$\partial E$ is bounded,
the assertion in~$(1)$ follows from
the Dominated Convergence Theorem.

To complete the proof of~$(1)$, we have only to consider the case in which~$\Omega=\R^n$
and~$\partial E$ is unbounded. In this case, we can take a sequence~$p_j\in\partial E$,
with~$|p_j|>2r+2+|p_{j-1}|$. In this way~$B_\rho(p_j)\cap B_\rho(p_i)$ is void when~$j\ne i$
and~$\rho\in (0,r+1)$, which gives that~${\mathcal{L}}^n\big( (\partial E)\oplus B_\rho\big)=+\infty$.
This says that, in this case,
$$ \Per_{r_k}(E, \Omega) =+\infty= \Per_{r}(E, \Omega),$$
and so~$(1)$ holds true.
\smallskip

Now, we prove the claim in~$(2)$. 
By $(1)$, we immediately deduce that 
\[\Gamma-\limsup_{r_k\to r} \Per_{r_k}(\cdot, \Omega)\leq \Per_r(\cdot, \Omega).\]
We are left to prove that, if $E_{k}\to E$ in $L^1_{\rm{loc}}(\Omega)$, then  
\[\liminf_{r_k\to r} \Per_{r_k}(E_{k}, \Omega)
\geq \Per_r(E, \Omega).\]
To see this, we observe that, 
if we set
\begin{equation}\label{def Erkwide}
\widetilde E_{k}:= \frac{r}{r_k}E_{k},
\qquad \widetilde \Omega_{k}:=\frac{r}{r_k}\Omega,
\end{equation}
then 
\begin{equation}\label{rischio}
\Per_{r_k}(E_{k}, \Omega)=\left(\frac{r_k}{r}\right)^{n-1} 
\Per_{r}(\widetilde E_{k}, \widetilde \Omega_{k}),
\end{equation}
and, recalling Remark \ref{lemcosy},
\begin{equation}\label{rescaled} 
\chi_{\widetilde E_{k}}\to \chi_E \text{ in $L^1_{\rm{loc}}(\Omega)$}.
\end{equation}  
Notice that, again by Remark \ref{lemcosy} applied with $F_k:=
\widetilde \Omega_k$ and 
$F:=\Omega$, we have that 
\begin{equation}\label{macchero} 
\vert \Per_{r}(\widetilde E_{k}, \widetilde \Omega_{k}) - 
\Per_{r}(\widetilde E_{k}, \Omega)\vert \le \frac{1}{2r}\,\vert \widetilde \Omega_{k}\Delta\Omega\vert
\to 0 \qquad{\rm as\ }k\to \infty.
\end{equation}
Therefore, by the lower semicontinuity of the functional~$\Per_r$, 
from \eqref{rischio}, \eqref{rescaled} and \eqref{macchero} we conclude that
\[\liminf_{r_k\to r} \Per_{r_k}(E_{r_k}, \Omega)=\liminf_{r_k\to r} \left(\frac{r_k}{r}\right)^{n-1}\Per_{r}(\widetilde E_{r_k}, \Omega)
\geq \Per_r(E, \Omega).\] 
This completes the proof of~$(2)$.
\smallskip

To prove~$(3)$,
we define~$\widetilde E_{r_k}$ as in~\eqref{def Erkwide}. Then 
$$ \ell =\liminf_{r_k\to r} \Per_{r}(\widetilde E_{r_k},\Omega).$$
Let $u_k:=\chi_{\widetilde E_{r_k}}$. 
Then, up to subsequences, 
$u_k$ converges to~$ u$ weakly-$\star$
in $L^\infty_{\rm{loc}}(\Omega)$ and 
then also $u_k$ converges to~$ u$  weakly in $L^1_{\rm{loc}}(\Omega)$,
with $u:\R^n\to [0,1]$. 
So, by the lower semicontinuity of the 
functional $\mathcal{E}_{1,\Omega}$, 
we have that \[\int_{\Omega} \osc_{B_r(x)} u\, dx\leq \liminf_{k}\int_{\Omega} \osc_{B_r(x)} 
u_k\, dx=2r \liminf_k\Per_r(\widetilde E_{r_k}, \Omega)=2r\ell,\]
which proves the third inequality in~\eqref{ZZ-1}.

So, by Lemma \ref{lemchar}, we have
\begin{equation}\label{DOVE}
{\mbox{$u_k\to u$ in  $L^1(\Omega\setminus\Sigma)$.}}\end{equation}

Also, \eqref{DOVE} gives that, for all $x\in \Sigma\oplus B_r$, 
it holds that
\begin{equation}\label{9q2PA} 
\textrm{$(\partial E_k)\cap B_{r}(x)\neq \varnothing$ for $k$ large enough.}
\end{equation}
Indeed, if this is not true, then either~$B_r(x)\subseteq E_k$
or~$B_r(x)\subseteq\R^n\setminus E_k$ for infinitely many~$k$'s,
and so either~$u_k=1$ or~$u_k=0$
a.e. in~$B_r(x)$ for infinitely many~$k$.
This would imply that
either~$u=1$ or~$u=0$
a.e. in~$B_r(x)$, in contradiction with the fact that
$\Sigma\cap B_{r}(x)\neq \varnothing$,
and so~\eqref{9q2PA} is proved.

Using \eqref{9q2PA}, we get 
that $\osc_{B_r(x)} u_k\to 1$ for $x\in \Sigma\oplus B_r$, therefore 
\[\mathcal{L}^n((\Sigma\oplus B_r)\cap \Omega)\leq 
\liminf_{k} \int_{(\Sigma\oplus B_r)\cap \Omega} \osc_{B_r(x)} u_k\, dx
\leq 2r \liminf_{k}\Per_{r}(\widetilde E_{r_k},\Omega)=2r\ell,
\]
which completes the proof of~\eqref{ZZ-1}.

Then, the proof of Theorem~\ref{SICOM:01} is complete.
\end{proof}

We now exhibit the lack of compactness
that was claimed in Remark \ref{NOCOM}.

\begin{proof}[Proof of Remark \ref{NOCOM}] Let us take, for
example,  $r:=1$ and~$\Omega:=(-3,3)\times(0,1)^{n-1}\subseteq \R^{n}$.
Let also, for any~$k\ge1$,
$$ E_k := \bigcup_{j=-2^{k-1}}^{2^{k-1}}
\left( \frac{2j}{2^k},\frac{2j+1}{2^k}\right)\times (0,1)^{n-1}.$$
If~$\chi_{E_k}$ converged pointwise, it would also converge in~$L^1(\Omega)$,
due to the Dominated Convergence Theorem. But this is not the case,
since the norm in~$L^1(\Omega)$ of~$\chi_{E_k}-\chi_{E_{k+m}}$
is always bounded from below independently on~$k$ and~$m$.
\end{proof}

Now we present the pathological counterexample to Theorem~\ref{SICOM:01}, as stated in Remark~\ref{REML01293dfas}.

\begin{proof}[Proof of Remark~\ref{REML01293dfas}]
We take~$n\ge2$, $r>0$, $r_k:=r+\frac1k$, 
$\Omega:=\{x_n>0\}$ and~$E:=\{x_n < -r\}$.
In this way, for any~$\rho\ge r$,
$$ (\partial E)\oplus B_\rho = \{ x_n=-r\}\oplus B_{\rho}=
\big\{ x_n\in (-r-\rho, \,-r+\rho)\big\}.$$
Therefore
\begin{eqnarray*}&&
\big( (\partial E)\oplus B_r\big)\cap\Omega=
\{ x_n\in (-2r, 0)\}\cap\{ x_n>0\}=\varnothing,\\
{\mbox{and }}&&
 \big( (\partial E)\oplus B_{r_k}\big)\cap\Omega=
\left\{ x_n\in \left( -2r-\frac1k,\,
\frac1k
\right)\right\}\cap\{ x_n>0\}=\left\{ x_n\in \left( 0,\,
\frac1k
\right)\right\}.
\end{eqnarray*}
These considerations prove~\eqref{0q9weir9wurgrodighsdifhw}.
\end{proof}

Now we show that sequences of minimizers for the functional~${\mathcal{F}}_{r,g}$ produce a limit minimizer.

\begin{proof}[Proof of Theorem~\ref{promin}]
First of all we consider the case in which~$E=\{u=1\}$,
and we show that it is a local minimizer in $\Omega$ 
and that $g=0$ a.e. on $\Sigma$.

Observe that for all $ x\in A:=\left(\Sigma\cup \partial E\right)\oplus B_r$ it holds that
\begin{equation}\label{9q2PAn} 
\textrm{$(\partial E_k)\cap B_{r}(x)\neq \varnothing$ for $k$ large enough.}
\end{equation}
The proof of this fact is the same as the proof of \eqref{9q2PA} 
above (in the proof of Theorem~\ref{SICOM:01}). 

Fix $\Omega'\subseteq\Omega\ominus B_r$ and let
$$E_k^\star:= (E\cup(\Sigma\cap \{g<0\})\cap \Omega')\cup (E_k\setminus \Omega').$$
Observe that 
\begin{equation}\label{gek}
\int_{E_k } g\, dx= \int_{E_k^\star } g\, dx
+ \int_{(E_k\cap (\Sigma\cap\{ g\geq 0\}))\cap \Omega'} g\,dx 
-\int_{((\Sigma \cap\{g<0\}) \setminus E_k)\cap \Omega'} g\, dx 
+ \omega'_k
\end{equation}
with 
$$\omega'_k:=\int_{E_k\cap \{u=1\}\cap\Omega'} g \,dx-
\int_{\{u=1\}\cap\Omega'} g\,dx
+\int_{E_k\cap \{u=0\}\cap\Omega'}g\, dx.$$ 
Note that 
\[
\lim_{k\to+\infty} \omega'_k=
 \lim_{k\to+\infty}\int_{ \{u=1\}\cap\Omega'} g\chi_{E_k}\,dx
- \int_{\{u=1\}\cap\Omega'} g\,dx
+\lim_{k\to+\infty}\int_{\{u=0\}\cap\Omega'}g\chi_{E_k}\, dx =0.\] 
We define $\Sigma_k:= (E_k\Delta \{g<0\})\cap \Sigma\cap\Omega'$.
So \eqref{gek} reads
\begin{equation}\label{gek2}
\int_{E_k\cap \Omega'} g \, dx= \int_{E_k^\star\cap \Omega'} g\, dx
+ \int_{\Sigma_k} |g|\,dx + \omega'_k.\end{equation}
We also let 
$$C:= \left((E_k\setminus \overline{E_k^\star}) 
\cup (E_k^\star\setminus \overline{E_k}) \right)\cap \partial \Omega'.
$$
Notice that for all $x\in D:=(C\oplus B_r)\cap\Omega'$ there holds 
\begin{equation}\label{9q2PB} 
\textrm{$(\partial E_k)\cap B_{r}(x)\neq \varnothing$ for $k$ large enough.}
\end{equation}
To check this, we argue by contradiction and we suppose that, for instance, 
$B_r(x)\subseteq E_k$ for $k$ large enough. 
Then, $u_k=1$, and so~$u=1$ a.e. in $B_r(x)$, i.e.
$B_r(x)\cap \Omega'\subseteq  E\cap \Omega'$. Recalling 
that $B_r(x)\setminus \Omega'\subseteq E_k\setminus \Omega'$, this implies 
that~$B_r(x)\subseteq E_k^\star$. Accordingly, we have 
that~$B_r(x)\cap C=\varnothing$, and so~$x\not\in D$, against
our assumption. This proves~\eqref{9q2PB}.

Properties \eqref{9q2PAn} and \eqref{9q2PB} imply that 
\begin{equation}\label{ostregeta}
(A\cup D)\cap\Omega'\subseteq 
\big(((\partial E_k)\oplus B_r)\cap \Omega'\big)\cup O_k,
\end{equation}
for some~$O_k\subseteq\R^n$ with 
\begin{equation}\label{LIMITE}
\omega_k:=\frac1{2r}{\mathcal{L}}^n\big(O_k\big)\to 0\qquad \textrm{as $k\to +\infty$.}
\end{equation} 
By definition of $E_k^\star$, there holds
\begin{equation}\label{TGAH}
\big((\partial E_k^\star)\oplus B_r\big)\cap\Omega'\subseteq 
(A\cup D\cup ((\partial E_k)\oplus B_r))\cap\Omega',
\end{equation}
Therefore, 
from \eqref{ostregeta} and \eqref{TGAH} it follows that 
$$
{\mathcal{L}}^n\big(( (\partial E_k^\star)\oplus B_r)\cap \Omega'\big)
\le {\mathcal{L}}^n\big(( (\partial E_k)\oplus B_r)\cap \Omega'\big) + 2r
\omega_k
$$
and then 
\begin{equation}\label{perper}
{\Per_r}(E_k^\star, \Omega')\le {\Per_r}(E_k, \Omega')+\omega_k.
\end{equation}
From \eqref{perper} and \eqref{gek2} we get 
\[{\Per_r}(E_k^\star, \Omega')+ \int_{E_k^\star\cap \Omega'} g \,dx 
\le {\Per_r}(E_k, \Omega')+\int_{E_k\cap \Omega'} g\, dx
+\omega_k- \int_{\Sigma_k} |g|\,dx -\omega'_k.\]
Therefore, by minimality of $E_k$ we deduce that 
\[0=\lim_{k\to+\infty}  \int_{\Sigma_k} |g|\,dx=
\int_{\Sigma\cap \{g> 0\}}g u \,dx -\int_{\Sigma\cap \{g<0\} } g(1-u)\,dx.\]
This implies that $u=0$ on $\Sigma\cap \{g> 0\}$ and $u=1$ on $\Sigma\cap \{g< 0\}$, which, recalling the definition of $\Sigma$, implies that 
$\mathcal L^n(\Sigma\cap \{g> 0\})=0=\mathcal L^n(\Sigma\cap \{g<0\})$, so $g=0$ almost everywhere on $\Sigma$.  
If  $\mathcal L^n(\{ g=0\})=0$, we deduce that $\mathcal L^n(\Sigma)=0$, and then we conclude the strong convergence of $\chi_{E_k}$ to $\chi_E$.

Moreover, since $\Sigma_k\subseteq \Sigma$,  we conclude that
\begin{equation}\label{c1}{\Per_r}(E_k^\star, \Omega')+ \int_{E_k^\star\cap \Omega'} g \,dx 
\le {\Per_r}(E_k, \Omega')+\int_{E_k\cap \Omega'} g\, dx
+\omega_k -\omega'_k.\end{equation}

Let now $F$ be such that $F\Delta E\subseteq \Omega'\ominus B_r$. We define  
$$F_k:= (F\cap \Omega')\cup (E_k\setminus \Omega').$$
By construction 
$$
{\Per_r}(F_k,\Omega')-{\Per_r}(E_k^\star,\Omega')=
{\Per_r}(F,\Omega')-{\Per_r}(E,\Omega').
$$ 
Recalling \eqref{c1} we then get
\begin{eqnarray*}
{\Per_r}(F,\Omega')+\int_{F\cap \Omega'} g \,dx -{\Per_r}(E,\Omega') -\int_{E\cap \Omega'} g \,dx =  
{\Per_r}(F_k,\Omega')+\int_{F_k\cap \Omega'} g \,dx -{\Per_r}(E_k^\star,\Omega') -\int_{E_k^\star\cap \Omega'} g \,dx 
\\
\geq  {\Per_r}(F_k,\Omega')+\int_{F_k\cap \Omega'} g \,dx-{\Per_r}(E_k,\Omega')-\omega_k-\int_{E_k\cap \Omega'} g \,dx+\omega_k'\geq -\omega_k+\omega_k',
\end{eqnarray*} 
where the last inequality follows by the minimality of $E_k$.
Now we send~$k\to+\infty$ and we obtain that
$${\Per_r}(F,\Omega')+\int_{F\cap \Omega'} g \,dx -{\Per_r}(E,\Omega') -\int_{E\cap \Omega'} g \,dx \ge0,$$
thanks to~\eqref{LIMITE}.
This concludes the proof of the
local minimality of~$E=\{u=1\}$.

Now, let $E$ be any set such that~$\{u=1\}\subseteq E\subseteq
\Omega\setminus \{u=0\}$. Then we can 
define~$E_k^\star= ((E\cup(\Sigma\cap \{g<0\})\cap \Omega')
\cup (E_k\setminus \Omega')$ and repeat 
the same argument as above
(recalling that $g=0$ almost everywhere on $\Sigma$) 
to get that \eqref{c1} holds. 
The proof of Theorem~\ref{promin} is thus complete.
\end{proof}

\section{The Dirichlet problem -- Proof of Theorem~\ref{DEBP}
} \label{P012wsPA}

\begin{proof}[Proof of Theorem~\ref{DEBP}]
Let~$E_k$ be a minimizing sequence.  Then, up to subsequences, 
$\chi_{E_k}\rightharpoonup u$ in~$L^1_{\rm{loc}}(\Omega)$,
with $u:\R^n\to [0,1]$. By the lower semicontinuity in $L^1$ of the functional $v\to \int_{\Omega} \osc_{B_r(x)} v\, dx$ proved in \cite{MR3187918} and the coarea formula,  we get \begin{align}\label{LA19eiydgfK}
\liminf_{k\to+\infty}\Per_r({E_k}, \Omega)
= \liminf_{k\to+\infty}\frac{1}{2r} 
\int_\Omega  \osc_{B_r(x)} \chi_{E_k} \,dx
\ge\frac{1}{2r} 
\int_\Omega  \osc_{B_r(x)} u \,dx=
\int_{0}^{1} \Per_r(\{u>s\}, \Omega) \,ds.\end{align}
Notice that
\begin{equation} \label{somega}
\int_{0}^{1} \Per_r(\{u>s\}, \Omega)\, ds\ge \Per_r(\{u>s_\Omega\}, \Omega),\end{equation}
for a suitable~$s_\Omega\in(0,1)$.
So, we define~$E:=\{u>s_\Omega\}$.
Since~$\chi_{E_k}$ does not depend on~$k$ outside~$\Omega'$,
we have that~$E=E_k$ outside~$\Omega'$ and thus it is an admissible competitor.
Then, \eqref{LA19eiydgfK} says that~$E$ is a minimizer
for~$\Per_r$, and this proves Theorem~\ref{DEBP}.
\end{proof}

\section{Class~A minimizers -- Proofs of Proposition \ref{piani} and of
Theorem~\ref{DI1PR} } \label{023peasq}
Now we prove the results about Class~A minimizers.
We start showing that half-spaces are Class~A minimizers for~$\Per_r$
in every dimension. 

\begin{proof}[Proof of Proposition \ref{piani}]
In this proof, we write $x=(x', x_n)\in \R^n$.
Up to translations and rotations, 
we can assume that $E=\{x\in\R^n {\mbox{ s.t. }} x_n<0\}$. 
We fix $B_R$ with $R>r$, and we consider $F\subseteq \R^N$ such that $F\Delta E\Subset B_R$. 
Let $C_R$ be the cylinder $\{x'\in\R^{n-1}{\mbox{ s.t. }} |x'| \leq R\} 
\times [-R, R]$, and 
observe that $\Per_r(E, C_R)= n\omega_n R^{n-1}$.

For any fixed~$x'\in\R^{n-1}$, let 
also~$\ell_{x'}=\{(x',x_n)\in\R^{n} {\mbox{ s.t. }} x_n\in\R\}$. We compute 
\[2r  \Per_r(F, C_R)= \int_{|x'|\leq R} \mathcal{H}^{1} 
((\partial F\oplus B_r )\cap\ell_{x'} )\,dx'
\geq 2r\int_{|x'|\leq R}\, dx' =2r \Per_r(E, C_R), \]
where we used the observation that~$\mathcal{H}^{1} 
((\partial F\oplus B_r )\cap \ell_{x'}) \geq 2r$, for every $x'$.
This proves Proposition \ref{piani}.
\end{proof} 

Now we characterize the Class~A 
minimizers of the nonlocal perimeter functional
in dimension~$1$ 
\begin{proof}[Proof of Theorem~\ref{DI1PR}]
Suppose that~$E\subseteq\R$ is a Class~A minimizer for~$\Per_r$.
Assume also that~$E\ne\varnothing$ and $E\ne\R$. Observe that this implies that  $E\not\subseteq (a,b)$ and  $\R^n \setminus E\not\subseteq (a,b)$ for every  $-\infty<a<b<+\infty$.
Indeed, if $E\subseteq (a,b)$ with $ -\infty<a<b<+\infty$, then  the empty set would be an admissible competitor for~$E$ in~$(a-r,b+r)$ and this would contradict the minimality of~$E$. Similarly for $\R^n\setminus E$. 
 
To conclude,   it is sufficient to show that $E$ is connected: 
\begin{equation}\label{SEGME}
{\mbox{if~$p$, $q\in E$ with~$p<q$, then~$(p,q)\subseteq E$.}}\end{equation}
We prove \eqref{SEGME} by contradiction.  

Assume it is not true, then there exists a point~$\beta\in (\partial E)\cap(p,q)$.
We define~$F:=E\cup(p,q)$ and we observe that~$F$ and~$E$ coincide
outside~$(p,q)$. Also, 
\begin{equation}\label{4.3bis}
{\mbox{$(\partial F)\cap (p,q)=\varnothing$ 
while~$(\partial E)\cap(p,q)\ni\beta$.}}\end{equation}
We also observe that
\begin{equation}\label{FEE} 
(\partial F)\setminus [p,q] = (\partial E)\setminus[p,q].\end{equation}
We claim that
\begin{equation}\label{FEE2} 
(\partial F)\setminus (p,q) \subseteq (\partial E)\setminus(p,q).\end{equation}
Indeed, if~$\zeta\in (\partial F)\setminus (p,q)$ then 
either~$\zeta\in(\partial F)\setminus[p,q]$, or~$\zeta\in\{p,q\}$.
If~$\zeta\in(\partial F)\setminus[p,q]$, then, by~\eqref{FEE},
we have that~$\zeta\in (\partial E)\setminus[p,q]\subseteq
(\partial E)\setminus(p,q)$, and we are done. 

Hence, we can focus on the case in which, for instance,~$\zeta=p$. 
Since~$F$ contains~$(p,q)$,
the fact that~$\zeta\in\partial F$ implies that there exists~$\zeta_k\in\R^n\setminus F$
with~$\zeta_k\le\zeta=p$. Then, by the definition of~$F$, we see 
that~$\xi_k\in\R^n\setminus E$. 
On the other hand, we know that~$\xi=p\in E$ (recall~\eqref{SEGME}). 
These observations
imply that~$\zeta=p\in\partial E$. 
This proves~\eqref{FEE2} also in this case.

{F}rom~\eqref{4.3bis} and~\eqref{FEE2} we get that
\begin{eqnarray*}&&
{\mathcal{L}}^n \Big(\big( (\partial E)\oplus(-r,r)\big)\cap (p-r,q+r)\Big)
-{\mathcal{L}}^n \Big(\big( (\partial F)\oplus(-r,r)\big)\cap (p-r,q+r)\Big)
\\ &=&
{\mathcal{L}}^n \Big(\big( (\partial E)\oplus(-r,r)\big)\cap (p,q)\Big)
-{\mathcal{L}}^n \Big(\big( (\partial F)\oplus(-r,r)\big)\cap (p,q)\Big)
\\ &&\qquad+
{\mathcal{L}}^n \Big(\big( (\partial E)\oplus(-r,r)\big)\cap \big((p-r,q+r)\setminus(p,q)\big)\Big)
\\ &&\qquad-{\mathcal{L}}^n \Big(\big( (\partial F)\oplus(-r,r)\big)\cap \big((p-r,q+r)\setminus(p,q)\big)\Big)
\\ &\ge& {\mathcal{L}}^n\big( (\beta-r,\beta+r)\big)
-{\mathcal{L}}^n\big( (0,r)\big)\\
&>&0.
\end{eqnarray*}
This implies that~$\Per_r(E,(p-r,q+r))>\Per_r(F,(p-r,q+r))$, which is against
minimality, and so the proof of~\eqref{SEGME} is completed.
\end{proof}
\section{Isoperimetric inequalities -- Proofs of Lemma~\ref{ISOR},
Lemma~\ref{lemmatecnico},
Theorem~\ref{ISOPER},
Remark~\ref{FAIL},
Theorem~\ref{PW} and
Remark~\ref{NONPO}} \label{00232}

Now, we deal with the isoperimetric problems.

\begin{proof}[Proof of Lemma~\ref{ISOR}]
First of all, we prove~(i). To this end, we remark that, without loss of generality,
we can suppose that~$\partial E$ is bounded
(if not, there would exist a sequence~$x_j\in\partial E$ such that~$|x_j|\ge j$
and~$|x_{j+1}-x_j|\ge 2r+1$, and thus~$\partial E\oplus B_r$
would contain the disjoint balls~$B_r(x_j)$, thus yielding that~$\Per_r(E)=+\infty$).
 
In addition, we notice that~$(\partial B_R)\oplus B_r=B_{R+r}\setminus B_{(R-r)^+}$ and therefore
$$ 2r\,\Per_r(B_R)=
{\mathcal{L}}^n \Big( (\partial B_R)\oplus B_r\Big)
={\mathcal{L}}^n ( B_{R+r}\setminus B_{(R-r)^+}).$$
By the Brunn-Minkowski Inequality (see e.g.~\cite{MR3155183}
or Theorem~4.1 in~\cite{MR1898210})
we have that
\begin{equation}\label{77:77}
\begin{split}& \Big( {\mathcal{L}}^n \big( E\oplus B_r\big)\Big)^{1/n}\geq 
\Big( {\mathcal{L}}^n (E)\Big)^{1/n}+\Big({\mathcal{L}}^n (B_r)\Big)^{1/n}
\\ &\qquad\qquad=
\Big( {\mathcal{L}}^n (B_R)\Big)^{1/n}+\Big({\mathcal{L}}^n (B_r)\Big)^{1/n}
=
\Big( {\mathcal{L}}^n (B_{R+r})\Big)^{1/n}
.\end{split}\end{equation}
As a consequence, we get
\begin{eqnarray}\label{RMuno}
{\mathcal{L}}^n \big( E\oplus B_r\big) - {\mathcal{L}}^n (E)
\ge {\mathcal{L}}^n \big( B_{R+r}\big) - {\mathcal{L}}^n (B_R).
\end{eqnarray}
We observe that if $R<r$, then $B_{R-r}=\emptyset$ and $E\ominus B_r=\emptyset$. 
Therefore \eqref{RMuno} implies (i).

On the other hand, if $R\geq r$, 
let us take
$\widetilde R\in[0,R]$ such that 
$$ {\mathcal{L}}^n \big( E\ominus B_r\big) =
{\mathcal{L}}^n \big( B_{\widetilde R}\big).$$
Also, recalling that $\big( E\ominus B_r\big)\oplus B_r\subseteq E$,
we have that 
$$
{\mathcal{L}}^n \big( (E\ominus B_r)\oplus B_r\big) \le 
{\mathcal{L}}^n (E) = {\mathcal{L}}^n (B_R).
$$ Accordingly,
applying again the Brunn-Minkowski Inequality we get that
\begin{eqnarray*}&&
{\mathcal{L}}^n (B_{R})^{1/n}\ge {\mathcal{L}}^n \big( (E\ominus B_r)\oplus B_r\big)^{1/n} 
\\&&\qquad\ge 
\Big( {\mathcal{L}}^n (E\ominus B_r)\Big)^{1/n}+\Big({\mathcal{L}}^n (B_r)\Big)^{1/n}
=
\Big( {\mathcal{L}}^n (B_{\widetilde R+r})\Big)^{1/n},
\end{eqnarray*}
which implies that $\widetilde R\le R-r$.

{F}rom this, we obtain that if $R\geq r$ 
\begin{equation}\begin{split}\label{RMdue}
{\mathcal{L}}^n (E) - 
{\mathcal{L}}^n \big( E\ominus B_r\big)\,&  =
{\mathcal{L}}^n (B_R)-{\mathcal{L}}^n (B_{\widetilde{R}})
\\ &
\ge {\mathcal{L}}^n (B_R) -
{\mathcal{L}}^n \big( B_{R-r}\big). 
\end{split}\end{equation}
  
Putting together \eqref{RMuno} and \eqref{RMdue} if $R\geq r$ we obtain
\begin{eqnarray*}&&
2r\,\Per_r(E)= {\mathcal{L}}^n \big( E\oplus B_r\big) - {\mathcal{L}}^n (E\ominus B_r)
\\&&\qquad\ge {\mathcal{L}}^n \big( B_{R+r}\big)-{\mathcal{L}}^n \big( B_{R-r}\big)
=2r\,\Per_r(B_R),
\end{eqnarray*}
thus proving~(i).

Now, we prove~(ii). For this, we observe that if equality holds,
then all the previous equalities hold true with equal sign.
In particular,
formula~\eqref{77:77} would give that
$$ \Big( {\mathcal{L}}^n \big( E\oplus B_r\big)\Big)^{1/n}= 
\Big( {\mathcal{L}}^n (E)\Big)^{1/n}+\Big({\mathcal{L}}^n (B_r)\Big)^{1/n}.$$
Hence (see e.g.
page~363 in~\cite{MR1898210}), since
equality  holds  in  the
Brunn-Minkowski inequality
if and only if
the two sets are homothetic
convex  bodies (up to removing sets of measure zero),
we have that~$E=B_{\lambda R}(p)\setminus{\mathcal{N}}$,
for some set~${\mathcal{N}}$ of null measure, some~$p\in\R^n$ and some~$\lambda>0$.
Since
$$ {\mathcal{L}}^n(B_R)=
{\mathcal{L}}^n(E)={\mathcal{L}}^n\big( B_{\lambda R}(p)\setminus{\mathcal{N}}\big)=
\lambda^n {\mathcal{L}}^n(B_R),$$
we obtain that~$\lambda=1$, which establishes~(ii).
\end{proof}

Having settled the global isoperimetric problem, we now deal with
the proof of the relative isoperimetric inequality.
First of all we give the proof of the technical lemma.

\begin{proof}[Proof of Lemma~\ref{lemmatecnico}] 
We consider a partition of~$\R^n$ into adjacent cubes of side~$\frac{r_k}{4\sqrt{n}}$
(hence, the diameter of each cube is~$\frac{r_k}4$).
These cubes will be denoted by~$\{ Q_j\}_{j\in\N}$.
For any~$k\in\N$, we set
\begin{equation}\label{7810284pa} 
I_k:=\{ j\in \N {\mbox{ s.t. }} Q_j\cap E_k\ne\varnothing\}.
\end{equation}
Let also
$$ \widehat E_k:=\bigcup_{ j \in I_k} Q_j.$$
Notice that~\eqref{0001:INCLU} is obvious in this setting.
We now prove~\eqref{0001:BIS}. For this,
we say that~$Q_j$ is a $k$-boundary cube
if~$j\in I_k$ and there exists a cube~$Q_i$ that is adjacent
to~$Q_j$ with~$i\not\in I_k$. We let~$\beta_k$ be the number
of $k$-boundary cubes which intersect~$\Omega_k$.

We remark that
\begin{equation}\label{8qyuwdgfh2trefduwfyuifgyewisfg10001}
\Per( \widehat E_k,\,
\Omega_k )\le C\beta_k r_k^{n-1},
\end{equation}
for some~$C>0$. We also claim that
\begin{equation}\label{781293847dhu}
\beta_k\le \frac{C\, \Per_{r_k}(E_k,\Omega_k)}{r_k^{n-1}}.
\end{equation}
up to renaming~$C>0$. To this end,
let~$Q_j$ be a $k$-boundary cube
and~$Q_i$ be its adjacent cube with~$j\in I_k$ and~$i\not \in I_k$.
Thus, by~\eqref{7810284pa}, there exists~$p_{j,k}\in Q_j\cap E_k$
and~$p_{i,k}\in Q_i\setminus E_k$. Consequently,
we find a point~$p^\star_k\in \partial E_k$ which lies at distance
at most~$r_k/4$ from~$Q_j$. Therefore
\begin{equation} \label{Piop}
(\partial E_k)\oplus B_{r_k}\supseteq
B_{r_k}(p_k^\star)\supseteq Q_j\oplus B_{\frac{r_k}{100}}.\end{equation}
In addition, if~$Q_j$ intersects~$\Omega_k$, it follows from~\eqref{LIPOM}
that (for large~$k$)
$$ {\mathcal{L}}^n \big( (
Q_j\oplus B_{\frac{r_k}{100}}) \cap \Omega_k
\big)\ge \frac{r_k^n}{C},$$
for some~$C>0$.
Hence, if~$Q_j^\star$ denotes the dilation of~$Q_j$ by a factor~$2$ with respect to
its center, we have that~$Q^\star_j\supseteq Q_j\oplus B_{\frac{r_k}{100}}$ and
$$ {\mathcal{L}}^n \big(
(Q_j\oplus B_{\frac{r_k}{100}}) \cap \Omega_k\cap Q_j^\star
\big)
={\mathcal{L}}^n \big((
Q_j\oplus B_{\frac{r_k}{100}} )\cap \Omega_k
\big)
\ge \frac{r_k^n}{C}.$$
This and~\eqref{Piop} give
that
\begin{equation}\label{OVG} {\mathcal{L}}^n \Big(
\big( (\partial E_k)\oplus B_{r_k}\big)\cap\Omega_k
\cap Q_j^\star \Big)\ge 
\frac{ r_k^n}C.\end{equation}
Our goal is now to sum up~\eqref{OVG}
for all the indices~$j$ for which~$Q_j$ is a boundary cube that intersects~$\Omega_k$.
Notice that the family~$\{
Q_j^\star \}_{j\in\N}$ is overlapping (differently from the original
nonoverlapping family~$\{Q_j\}_{j\in\N}$), but the number of overlappings
is finite, say bounded by some~$C^\star>0$.
Hence,
since~\eqref{OVG} is valid for any $k$-boundary cube~$Q_j$
which intersect~$\Omega_k$, summing up~\eqref{OVG} over the indices~$j$
gives that
\begin{eqnarray*}
&& C^\star\,{\mathcal{L}}^n \Big(
\big( (\partial E_k)\oplus B_{r_k}\big)\cap\Omega_k \Big)\ge 
\sum_{j\in\N}
{\mathcal{L}}^n \Big(
\big( (\partial E_k)\oplus B_{r_k}\big)\cap\Omega_k
\cap Q_j^\star \Big)\\
&&\qquad\ge
\sum_{ \small{\mbox{$k$-boundary cube~$Q_j$}}\atop{\mbox{which intersect~$\Omega_k$}}}
{\mathcal{L}}^n \Big(
\big( (\partial E_k)\oplus B_{r_k}\big)\cap\Omega_k
\cap Q_j^\star \Big)
\ge
\frac{ \beta_k r_k^n}C\end{eqnarray*}
and thus
$$ C^\star\,\Per_{r_k}(E_k,\Omega_k)\ge \frac{ \beta_k r_k^{n-1}}C,$$
that establishes~\eqref{781293847dhu}, up to renaming constants.

{F}rom~\eqref{8qyuwdgfh2trefduwfyuifgyewisfg10001} and~\eqref{781293847dhu}
it follows that~\eqref{0001:BIS} holds true, as desired.

In addition, from~\eqref{UNIKA} and~\eqref{0001:BIS},
we obtain a uniform bound for~$\Per( \widehat E_k,\,
\Omega_k )$ and thus on~$
\Per( \widehat E_k,\,
\Omega )$, so by compactness, up to a subsequence we
have that there exists $E\subseteq\R^n$ for which 
\begin{equation}\label{convl1}
\chi_{\widehat E_k}\to\chi_E \quad{\mbox{ in }}L^1(\Omega).
\end{equation}

Now we prove~\eqref{0002:BIS}. For this, let
\begin{eqnarray*} &&
J_k:=\big\{ j\in I_k {\mbox{ s.t. }} Q_j\cap \Omega_k\ne\varnothing
{\mbox{ and }} Q_j\setminus E_k\ne\varnothing\big\}\\
{\mbox{and }}&& H_k:=\bigcup_{j\in J_k}Q_j.\end{eqnarray*}
Notice that
\begin{equation}\label{CF91}
(\widehat E_k \setminus E_k)\cap
\Omega_k \subseteq H_k.
\end{equation}
To check this, let~$x\in (\widehat E_k \setminus E_k)\cap
\Omega_k$. Then, there exists~$j\in I_k$ such that~$x\in Q_j$.
Notice that~$x\in Q_j\setminus E_k$ and~$x\in Q_j\cap
\Omega_k$, which means that~$j\in J_k$, and so~$x\in H_k$,
thus proving~\eqref{CF91}.

Now we prove that
\begin{equation}\label{77-CARDI}
{\mbox{the cardinality of~$J_k$ is
bounded by }}\,\frac{C\,\Per_{r_k}(E_k,\Omega_k)}{r_k^{n-1}},
\end{equation}
Indeed, if~$j\in J_k$, then also~$j\in I_k$, therefore~$Q_j\cap E_k\ne\varnothing$
and also~$Q_j\setminus E_k\ne\varnothing$. Hence there exists~$x_{j,k}\in Q_j\cap(\partial E_k)$.
Notice that
\begin{equation}\label{78XCAY}
B_{r_k}(x_{j,k})\supseteq Q_j\oplus B_{\frac{r_k}{100}}
\end{equation}
Also, $Q_j\cap \Omega_k\ne\varnothing$. Consequently,
making use of~\eqref{LIPOM} and~\eqref{78XCAY}, we see that
\begin{eqnarray*}
&&{\mathcal{L}}^n \Big(\big((\partial E_k)\oplus B_{r_k}\big)\cap\Omega_k\cap ( Q_j\oplus B_{\frac{r_k}{100}})\Big)\ge
{\mathcal{L}}^n \big( B_{r_k}(x_{j,k})\cap\Omega_k\cap (Q_j\oplus B_{\frac{r_k}{100}})\big)\\&&\qquad
\ge {\mathcal{L}}^n \big( (Q_j\oplus B_{\frac{r_k}{100}})\cap\Omega_k\big)
\ge \frac{r_k^n}{C},
\end{eqnarray*}
up to renaming~$C>0$. Since this is valid for any~$j\in J_k$ and there is a finite number of overlaps
between different~$Q_j\oplus B_{\frac{r_k}{100}}$, we conclude that
$$ {\mathcal{L}}^n \Big(\big((\partial E_k)\oplus B_{r_k}\big)\cap\Omega_k\Big)\ge
\frac{r_k^n\; \# J_k}{C},$$
up to renaming~$C>0$
that implies~\eqref{77-CARDI}.

Now, in view of~\eqref{CF91} and~\eqref{77-CARDI}, we find that
\begin{eqnarray*}&& {\mathcal{L}}^n\big( (\widehat E_k \setminus E_k)\cap
\Omega_k\big)\le{\mathcal{L}}^n( H_k)\le\sum_{j\in J_k}
{\mathcal{L}}^n(Q_j)\\&&\qquad\le C\,r_k^n\,\# J_k\le C\,r_k\,\Per_{r_k}(E_k,\Omega_k).\end{eqnarray*}
This implies~\eqref{0002:BIS}.

Finally, \eqref{UNIKArk}, \eqref{UNIKA} and~\eqref{0002:BIS} give that
$$ \chi_{\widehat E_k}-\chi_{E_k}\to 0 \quad{\mbox{ in }}L^1(\Omega),$$
and this, combined with~\eqref{convl1}, 
implies~\eqref{nuovaconv}, as desired.
\end{proof}

With this, we can now complete the proof of Theorem~\ref{ISOPER}.

\begin{proof}[Proof of Theorem~\ref{ISOPER}] 
First of all we consider the case in which $R<r\leq \lambda R$ for $\lambda>1$. 
By assumption we know that ${\mathcal{L}}^n (E\cap B_{R})\leq \frac{1}{2} {\mathcal{L}}^n (B_{R})=\frac12 \omega_n R^n$. So either ${\mathcal{L}}^n (E\cap B_{R})=0$  and there is nothing to prove, or
${\mathcal{L}}^n (E\cap B_{R})\not=0$. In this case $\partial E\cap B_R\not=\emptyset$, and so $\Per_r(E, B_R)\geq  \frac{1}{2r} \frac{{\mathcal{L}}^n (B_{R})}{3}=
\frac{\omega_n R^n}{6r} $.
Summarizing we get 
\[ \Per_r(E, B_R)\geq \frac{\omega_n R^n}{6r}\geq  \frac{\omega_n R^{n-1}}{6\lambda} \geq \frac{\omega_n}{6\lambda} \left(\frac{2{\mathcal{L}}^n (E\cap B_{R})}{\omega_n}\right)^{\frac{n-1}{n}}=\frac{C}{\lambda}\left( {\mathcal{L}}^n (E\cap B_{R})\right)^{\frac{n-1}{n}}. \]

We consider now the case $\lambda=1$, so $r\leq R$, and we argue by contradiction.
If~\eqref{78:89ooo} were not true, recalling also~\eqref{MAGGIORE}
and~\eqref{178:89ooo},
we would infer that there exist sequences
\begin{equation}\label{89iwueeuigifuegifgedufg}
R_k\ge r_k>0\end{equation}
and~$E_k\subseteq\R^n$ such that
\begin{equation}\label{6767}\begin{split}
&
\frac{ {\mathcal{L}}^n (E_k\cap B_{R_k})}{
{\mathcal{L}}^n (B_{R_k}) }\le \frac12\\
{\mbox{and }}\;&\Big(
{\mathcal{L}}^n (E_k\cap B_{R_k} )\Big)^{\frac{n-1}n}> k\,\Per_{r_k}(E_k,B_{R_k}).
\end{split}\end{equation}
We define~$\lambda_k:=\big( {\mathcal{L}}^n(E_k\cap B_{R_k})\big)^{-\frac1n}$,
$\widetilde E_k:= \lambda_k E_k$,
$\widetilde r_k:= \lambda_k r_k$ and~$\widetilde R_k=\lambda_k R_k$.
With this scaling, we have that
\begin{equation}\label{9090y823ewef3sdf342r}
{\mathcal{L}}^n (\widetilde E_k\cap B_{\widetilde R_k})=
{\mathcal{L}}^n \big(\lambda_k ( E_k\cap B_{R_k})\big)=
\lambda_k^n {\mathcal{L}}^n( E_k\cap B_{R_k})=1.
\end{equation}
Moreover,
$$ \Per_{\widetilde r_k}(\widetilde E_k, B_{\widetilde R_k})=
\Per_{\lambda r_k}(\lambda_k E_k, \lambda_k B_{R_k})=\lambda_k^{n-1}
\Per_{r_k}(E_k, B_{R_k}).
$$
Therefore~\eqref{6767} becomes
\begin{equation}\label{6768} 
{\mathcal{L}}^n (B_{\widetilde R_k})\ge2\qquad 
{\mbox{and }}\qquad 
\Per_{\widetilde r_k}(\widetilde E_k,B_{\widetilde R_k}
) =\lambda_k^{n-1}
\Per_{r_k}(E_k, B_{R_k}) <\frac1k \lambda_k^{n-1}\Big({\mathcal{L}}^n (E_k\cap B_{R_k} )\Big)^{\frac{n-1}n}=\frac{1}{k}.
\end{equation}
Thanks to the first inequality in~\eqref{6768}, setting
$$ \widetilde R_o:=\liminf_{k\to+\infty}\widetilde R_k,$$
we have that~$R_o\in (0,+\infty]$ and
\begin{equation}\label{gy834bt48723vctr73nygyuerg}
{\mathcal{L}}^n (B_{\widetilde R_o})\ge2.\end{equation}
Here, the obvious notation~$B_{\widetilde R_o}=\R^n$ if~$R_o=+\infty$
has been used.

Now we claim that
\begin{equation}\label{tp01}
\widetilde r_k\to0.
\end{equation}
For this, we observe that~$\widetilde
R_k\ge \widetilde r_k$,
thanks to~\eqref{89iwueeuigifuegifgedufg}.

In addition,
$$ {\mathcal{L}}^n (\widetilde E_k\cap B_{\widetilde R_k})= 1<2\le
{\mathcal{L}}^n (B_{\widetilde R_k}),$$
thanks to~\eqref{9090y823ewef3sdf342r}
and~\eqref{6768}. Therefore both~$\widetilde E_k\cap B_{\widetilde R_k}$
and~$B_{\widetilde R_k}\setminus\widetilde E_k$
are nonvoid, and so there exists~$p_k\in(\partial \widetilde E_k)\cap B_{\widetilde R_k}$.
Accordingly,
$$ \Per_{\widetilde r_k}(\widetilde E_k, B_{\widetilde R_k})\ge\frac{1}{2\widetilde r_k}
{\mathcal{L}}^n ( B_{\widetilde r_k}(p_k)\cap B_{\widetilde R_k} )\ge
\frac{c\,\min\{ \widetilde r_k^n,\;\widetilde R_k^n\} }{\widetilde r_k} = c\,\widetilde r_k^{n-1},$$
for some~$c>0$.
{F}rom this
and~\eqref{6768} we deduce that
$$  c\,\widetilde r_k^{n-1}\le
\Per_{\widetilde r_k}(\widetilde E_k,B_{\widetilde R_k}
) <\frac1k,
$$
which proves~\eqref{tp01}, as desired.

In light of~\eqref{tp01}, we can now exploit
Lemma \ref{lemmatecnico} (with 
$\Omega_k:=B_{\widetilde R_k}$ and~$\Omega:=\cap_k B_{\widetilde R_k}$,
which is nontrivial thanks to~\eqref{gy834bt48723vctr73nygyuerg}).
In particular, from~\eqref{0001:INCLU} and~\eqref{0001:BIS},
we know that there exists~$\widehat E_k\subseteq\R^n$ such that
\begin{equation}\label{USA:1} \widehat E_k\supseteq \widetilde E_k\end{equation}
and
$$
\Per( \widehat E_k,\,
B_{\widetilde R_k} )\le C\,\Per_{\widetilde r_k}(\widetilde E_k,B_{\widetilde R_k})
.$$
Therefore, recalling~\eqref{6768},
\begin{equation}\label{USA:2} \Per( \widehat E_k,\,
B_{\widetilde R_k} )\le \frac{C}{k}.\end{equation}
Moreover, using~\eqref{0002:BIS},
\begin{equation}\label{9090y823ewef3sdf342rBIS} \int_{B_{\widetilde R_k} }
|\chi_{\widetilde E_k}-\chi_{\widehat E_k}|\,dx\le
C\,\widetilde r_k\,\Per_{\widetilde r_k}(\widetilde E_k,B_{\widetilde R_k})
\le \frac{C\widetilde r_k}{k }.
\end{equation}
Using~\eqref{9090y823ewef3sdf342r}
and~\eqref{9090y823ewef3sdf342rBIS}, we see that
\begin{eqnarray*} {\mathcal{L}}^n (\widehat E_k\cap B_{\widetilde R_k})&\le&
{\mathcal{L}}^n (\widetilde E_k\cap B_{\widetilde R_k})
+{\mathcal{L}}^n \big((\widehat E_k\setminus\widetilde E_k)\cap B_{\widetilde R_k}\big)\\
&\le& 1+ \frac{C\widetilde r_k}{k}.
\end{eqnarray*}
This and~\eqref{gy834bt48723vctr73nygyuerg} imply that
\begin{equation}\label{9j2usfgwe8v19urioqa} \lim_{k\to+\infty}
\frac{ {\mathcal{L}}^n (\widehat E_k\cap B_{\widetilde R_k}) }{
{\mathcal{L}}^n (B_{\widetilde R_k}) }\le
\frac{1}{ {\mathcal{L}}^n (B_{\widetilde R_o}) }\le\frac12.\end{equation}
So, we can assume that, for large~$k$,
$$ \frac{ {\mathcal{L}}^n (\widehat E_k\cap B_{\widetilde R_k}) }{
{\mathcal{L}}^n (B_{\widetilde R_k}) }\le\frac34,$$
hence we can apply the classical relative isoperimetric inequality and find that
$$ \Big(
{\mathcal{L}}^n (\widehat E_k\cap B_{\widetilde R_k})\Big)^{\frac{n-1}{n}} 
\le C\,\Per(\widehat E_k,B_{\widetilde R_k}).$$
Consequently, recalling~\eqref{USA:1} and~\eqref{USA:2},
$$\Big(
{\mathcal{L}}^n (\widetilde E_k\cap B_{\widetilde R_k})\Big)^{\frac{n-1}{n}} \le
\frac{C}{k }.$$ {F}rom this,
sending~$k\to+\infty$ and recalling~\eqref{9090y823ewef3sdf342r},
we obtain a contradiction that proves Theorem~\ref{ISOPER}.
\end{proof}

Now we check that ~\eqref{78:89ooo} cannot hold with a constant independent of $\lambda$.

\begin{proof}[Proof of Remark~\ref{FAIL}]
As an example, let~$n=2$, $R=100$ and~$E:=B_1$.
Notice that~\eqref{178:89ooo} is satisfied, but~\eqref{78:89ooo}
cannot be true for arbitrarily large~$r$ for some constant $C$ independent of the rate $\frac{r}{R}$.
Indeed, we have that~$\partial E\subseteq B_{100}$, hence
$$ \big( (\partial E)\oplus B_r\big)\cap B_R\subseteq
B_{ 100+r}\cap B_R = B_{100}.$$
As a consequence, if $r$ is sufficiently large,
$$ \Per_r (E,B_R)\le\frac{1}{2r} {\mathcal{L}}^n (B_{100})
< \frac1C \,\Big({\mathcal{L}}^n(B_1)\Big)^{\frac{n-1}n}=
\frac1C \,\Big({\mathcal{L}}^n(E\cap B_R)\Big)^{\frac{n-1}n},$$
thus violating~\eqref{78:89ooo}.
\end{proof}

Now, we can provide the easy proof of the
Poincar\'e-Wirtinger inequality in 
Theorem~\ref{PW}:

\begin{proof}[Proof of Theorem~\ref{PW}]
Up to a vertical translation, we may and do suppose that
\begin{equation}\label{ZE:AV}
{\mbox{$u$
has zero average in~$B_R$.}}\end{equation}
Moreover,
$$ {\mathcal{L}}^n (\{u>0\}\cap B_R)+
{\mathcal{L}}^n (\{u<0\}\cap B_R) \le
{\mathcal{L}}^n (B_R).$$
Thus, possibly exchanging~$u$ with~$-u$, we may and do suppose that
\begin{equation}\label{VE:CO}
{\mathcal{L}}^n (\{u>0\}\cap B_R) \le \frac{ {\mathcal{L}}^n (B_R) }{2}.
\end{equation}
Let also~$u^+:=\max\{u,0\}$. Then, using~\eqref{ZE:AV}, we see that
\begin{eqnarray*}&& \int_{B_R} |u| =\int_{B_R\cap\{u>0\}} u-
\int_{B_R\cap\{u<0\}} u\\&&\qquad=2\int_{B_R\cap\{u>0\}} u-
\int_{B_R} u = 2\int_{B_R\cap\{u>0\}} u^+ - 0.\end{eqnarray*}
Hence, integrating with respect to the distribution function
(see e.g. Theorem~5.51 in~\cite{MR3381284}),
we have that
\begin{equation}\label{DAQUI0}
\int_{B_R} |u|=2
\int_{B_R\cap\{u>0\}} u^+ =
2 \int_0^{+\infty} {\mathcal{L}}^n (\{u^+>s\}\cap B_R)\,ds.
\end{equation}
In addition, from~\eqref{VE:CO}, for any~$s\ge0$ we have that
$$ {\mathcal{L}}^n (\{u^+>s\}\cap B_R) 
=
{\mathcal{L}}^n (\{u>s\}\cap B_R) 
\le {\mathcal{L}}^n (\{u>0\}\cap B_R)
\le \frac{ {\mathcal{L}}^n (B_R) }{2}.$$
Consequently, we can exploit our relative isoperimetric inequality
in Theorem~\ref{ISOPER} with~$E:=\{u^+>s\}$ and conclude that, for any~$s\ge0$,
$$ \Big(
{\mathcal{L}}^n (\{u^+>s\}\cap B_R)\Big)^{\frac{n-1}{n}}
\le C\lambda\,\Per_r(\{u^+>s\},B_R),$$
for some~$C>0$. Multiplying this estimate by~$
\Big( {\mathcal{L}}^n (\{u^+>s\}\cap B_R)\Big)^{\frac{1}{n}}$,
we obtain that, for any~$s\ge0$,
\begin{eqnarray*}
{\mathcal{L}}^n (\{u^+>s\}\cap B_R)&\le&
  C\lambda\,\Per_r(\{u^+>s\},B_R)\, \Big( {\mathcal{L}}^n (\{u^+>s\}\cap B_R)\Big)^{\frac{1}{n}}
\\ &\le&   C\lambda\ R\, \Per_r(\{u^+>s\},B_R),
\end{eqnarray*}
up to renaming~$C>0$. Accordingly,
\begin{eqnarray*}
&& \int_0^{+\infty} {\mathcal{L}}^n (\{u^+>s\}\cap B_R)\,ds\le
 C\lambda R\, \int_0^{+\infty} \Per_r(\{u^+>s\},B_R)\,ds
\\&&\qquad \le  C \lambda R\, \int_\R \Per_r(\{u^+>s\},B_R)\,ds=\frac{C\lambda R}{r}\int_{B_R}\osc_{B_r(x)} u,
\end{eqnarray*}
thanks to the coarea formula in~\eqref{COAREA}.
Hence, recalling~\eqref{DAQUI0}, we conclude that
$$ \int_{B_R} |u|\le\frac{2C\lambda R}{r}\int_{B_R}\osc_{B_r(x)} u,$$
which is the desired result, up to renaming constants.
\end{proof}

Now we check that Theorem~\ref{PW} does not hold in general when~$r>R$ with a constant independent of the rate $\frac{r}{R}$:

\begin{proof}[Proof of Remark~\ref{NONPO}]
Let~$R=1$ and
$$ u(x):= \left\{ \begin{matrix} 1 & {\mbox{ if }} x>0, \cr
0 & {\mbox{ if }} x=0,\cr
-1 & {\mbox{ if }} x<0.\end{matrix}\right.$$
Notice that~$u$ has zero average and its oscillation is always bounded by~$2$.
Therefore, if~$r$ is large enough,
$$ \frac{CR}{r} \int_{B_R} \osc_{B_r(x)} u\,dx\le \frac{C}{r}
2 {\mathcal{L}}^n(B_1) < {\mathcal{L}}^n(B_1)=\int_{B_R}
\big|u - \langle u \rangle_R\big|,$$
which violates~\eqref{89w1:NOATo}.
\end{proof}


\section{Regularity issues and density estimates -- Proofs of 
Theorems~\ref{DENSITY}
and~\ref{FAIL:COM}} \label{91238erytfgdh}

In this section we prove the nonlocal density estimates
in Theorem~\ref{DENSITY}:

\begin{proof}[Proof of Theorem~\ref{DENSITY}]
We set~$f(R):={\mathcal{L}}^n(E\cap B_R)$.
We notice that if~$R-r\ge r$ and~$f(R-r)\le \frac{{\mathcal{L}}^n(B_R) }{2}$, then
we can apply the relative isoperimetric inequality in
Theorem~\ref{ISOPER} and obtain that
\begin{equation}\label{RELISOP}
\Big( f(R-r) \Big)^{\frac{n-1}{n}} \le C\,\Per_r(E,B_{R-r}).\end{equation}
Furthermore,
\begin{eqnarray*} 
\partial (E\setminus B_R)\subseteq
\big(
(\partial E)\setminus B_R\big)\cup\big(
(\partial B_R)\cap E \big).\end{eqnarray*}
Observe that 
\[(E\oplus B_r)\cap (B_{R+r}\setminus B_{R-r})=
\big(E\cap (B_{R+r}\setminus B_{R-r})\big)\cup 
\big((\partial E\oplus B_r)\cap (B_{R+r}\setminus B_{R-r})\big).\]
Consequently
\begin{eqnarray*} 
\big(\partial (E\setminus B_R)\big)\oplus B_r&\subseteq&\Big(
\big(
(\partial E)\oplus B_r\big)\setminus B_{R-r}\Big)\cup\big( (E\oplus B_r)\cap
(B_{R+r}\setminus B_{R-r})\big) \\
&\subseteq&\Big(
\big(
(\partial E)\oplus B_r\big)\cap (B_{R+r}\setminus B_{R-r})\Big)\cup\big( E\cap
(B_{R+r}\setminus B_{R-r})\big) \end{eqnarray*}
and therefore
\begin{equation}\label{ATTEN}
\begin{split} &
{\mathcal{L}}^n 
\Big(\big( (\partial (E\setminus B_R)) \oplus B_r\big)\cap
B_{R+r}\Big)\\ \le\,&
{\mathcal{L}}^n 
\Big(
\big(
(\partial E)\oplus B_r\big)\cap (B_{R+r}\setminus B_{R-r})\Big)
+{\mathcal{L}}^n\big( E\cap
(B_{R+r}\setminus B_{R-r})\big)\\
=\,& 2r\,\Per_r(E,\,B_{R+r}\setminus B_{R-r})+\big( f(R+r)-f(R-r)\big).
\end{split}\end{equation}
Assume also that~$B_{R+r}\subseteq\Omega$.
Then, the minimality of~$E$ in~$B_{R+r}$
and~\eqref{ATTEN}
give that
\begin{eqnarray*}
0 &\le& 2r\,\Big[ \Per_r (E\setminus B_R, \,B_{R+r}) -
\Per_r (E, \,B_{R+r}) \Big]\\
&=& 
{\mathcal{L}}^n 
\Big(\big( (\partial (E\setminus B_R)) \oplus B_r\big)\cap
B_{R+r}\Big)
-2r\,\Per_r (E, \,B_{R+r})
\\ &=&
\big( f(R+r)-f(R-r)\big)
+ 2r\,\Big[\Per_r(E,\,B_{R+r}\setminus B_{R-r})-
\Per_r (E, \,B_{R+r})\Big]\\&=&
\big( f(R+r)-f(R-r)\big)
-
2r\,\Per_r (E, \,B_{R-r}) .
\end{eqnarray*}
This and~\eqref{RELISOP} give that, if~$B_{R+2r}\subseteq\Omega$,
$R\ge2r$ and~$f(R-r)\le \frac{{\mathcal{L}}^n(B_R) }{2}$,
then
$$ 0\le \big( f(R+r)-f(R-r)\big)-\frac{2r\,\Big( f(R-r) \Big)^{\frac{n-1}{n}}}{C}.
$$
That is, if~$R\ge r$ and~$f(R)\le \frac{{\mathcal{L}}^n(B_R) }{2}$,
\begin{equation} \label{BASE}
f(R+2r) \ge f(R)+\frac{2r}{C}
\,\Big( f(R) \Big)^{\frac{n-1}{n}}
.\end{equation}
Now we define, for $k\in\N$, the sequence~$x_k:= f(R_o+2kr)$,
and we claim that, if~$B_{R_o+2kr}\subseteq\Omega$, then
\begin{equation}\label{INDI}
x_k\ge   \,(\omega_o^{\frac{1}{n}}+ 2 c_{\star } k r)^n
\end{equation}
where
\begin{equation}\label{c1}
c_{\star }:=\frac{1}{C\left(n+\frac{2(n-1)r}{C\omega_o^{\frac{1}{n}}}\right)}
,\end{equation}
being~$\omega_o$ as in~\eqref{GA0} and~$C$ as in~\eqref{BASE}.
The proof of~\eqref{INDI} is by induction.
First of all, from~\eqref{GA0}  we have that
$$ x_0 = f(R_o)=\omega_o 
,$$
and so~\eqref{INDI} holds true
when~$k=0$. Now we suppose that it holds true for~$k-1$,
namely
$$ x_{k-1}\ge \,(\omega_o^{\frac{1}{n}}+ 2 c_{\star } (k-1) r)^n.$$
Thus, from~\eqref{BASE},
\begin{equation}
\label{rfdtwyqv6454398e}
\begin{split}
x_k \,&= f(R_o+2(k-1)r+2r)
\\ &\ge
f(R_o+2(k-1)r)+\frac{2r}{C}
\,\Big( f(R_o+2(k-1)r) \Big)^{\frac{n-1}{n}}
\\ &=
x_{k-1}+\frac{2r}{C}
\,\Big( x_{k-1} \Big)^{\frac{n-1}{n}}\\
&= \Big( x_{k-1} \Big)^{\frac{n-1}{n}}\;
\left( \Big( x_{k-1} \Big)^{\frac{1}{n}}+\frac{2r}{C}\right)\\
&\ge
(\omega_o^{\frac{1}{n}}+ 2 c_{\star } (k-1) r)^{n-1}
\;\left(
\omega_o^{\frac{1}{n}}+ 2 c_{\star } (k-1) r
+\frac{2r}{C}\right)\\
&= (\omega_o^{\frac{1}{n}}+ 2 c_{\star } k r)^{n}
\frac{ (\omega_o^{\frac{1}{n}}+ 2 c_{\star } (k-1) r)^{n-1} }{
(\omega_o^{\frac{1}{n}}+ 2 c_{\star } kr)^{n-1} }\;
\frac{ \omega_o^{\frac{1}{n}}+ 2 c_{\star } (k-1) r
+\frac{2r}{C} }{
\omega_o^{\frac{1}{n}}+ 2 c_{\star } k r}\\
&=
(\omega_o^{\frac{1}{n}}+ 2 c_{\star } k r)^{n}
\;\left(1-\frac{2c_{\star }r}{ \omega_o^{\frac{1}{n}}+ 2 c_{\star } kr }
\right)^{n-1}\;\left(1+
\frac{
\frac{2r}{C}-2c_{\star }r
}{ \omega_o^{\frac{1}{n}}+ 2 c_{\star } kr }\right)
.\end{split}\end{equation} 
Now, by a first order Taylor expansion, we see that,
for any~$\tau\in[0,1]$,
$$(1-\tau)^{n-1}\ge 1-(n-1)\tau$$
and therefore
$$ \left(1-\frac{2c_{\star }r}{ \omega_o^{\frac{1}{n}}+ 2 c_{\star } kr }
\right)^{n-1}\ge 1- \frac{2(n-1)\,c_{\star }r}{
\omega_o^{\frac{1}{n}}+ 2 c_{\star } kr }.$$
As a consequence,
\begin{eqnarray*}
&& \left(1-\frac{2c_{\star }r}{ \omega_o^{\frac{1}{n}}+ 2 c_{\star } kr }
\right)^{n-1}\;\left(1+
\frac{
\frac{2r}{C}-2c_{\star }r
}{ \omega_o^{\frac{1}{n}}+ 2 c_{\star } kr }\right)\\
&\ge&\left(
1- \frac{2(n-1)\,c_{\star }r}{
\omega_o^{\frac{1}{n}}+ 2 c_{\star } kr }\right)\;
\;\left(1+
\frac{
\frac{2r}{C}-2c_{\star }r
}{ \omega_o^{\frac{1}{n}}+ 2 c_{\star } kr }\right)\\
&=& 1+
\frac{
\frac{2r}{C}-2c_{\star }r-2(n-1)\,c_{\star }r
}{ \omega_o^{\frac{1}{n}}+ 2 c_{\star } kr }
-
\frac{2(n-1)\,c_{\star }r}{ \omega_o^{\frac{1}{n}}+ 2 c_{\star } kr }\cdot
\frac{
\frac{2r}{C}-2c_{\star }r-2(n-1)\,c_{\star }r
}{ \omega_o^{\frac{1}{n}}+ 2 c_{\star } kr }
\\&\ge&
1+
\frac{
\frac{2r}{C}-2n\,c_{\star }r
}{ \omega_o^{\frac{1}{n}}+ 2 c_{\star } kr }
-
\frac{2(n-1)\,c_{\star }r}{ \omega_o^{\frac{1}{n}}+ 2 c_{\star } kr }\cdot
\frac{
\frac{2r}{C}
}{ \omega_o^{\frac{1}{n}}}
\\ &=&
1+
\frac{
\frac{2r}{C}-2n\,c_{\star }r-\frac{4(n-1)\,c_{\star }r^2}{C
\omega_o^{\frac1n}}
}{ \omega_o^{\frac{1}{n}}+ 2 c_{\star } kr }=1,
\end{eqnarray*}
thanks to~\eqref{c1}.
This and~\eqref{rfdtwyqv6454398e} give that~$x_k \ge 
(\omega_o^{\frac{1}{n}}+ 2 c_{\star } k r)^{n}$, which completes the inductive proof of~\eqref{INDI}.

{F}rom~\eqref{INDI} and~\eqref{c1}, we obtain~\eqref{DENS:EQ1},
\eqref{DENS:EQ2} and~\eqref{DENS:EQ3}.

Now, we prove~\eqref{DENS:EQ4}. To this end,
we take~$k$ as in~\eqref{DENS:EQ5}
and we observe that
$$ x_0\le\dots\le x_{k-1}\le \overline{C} r^n.$$
Hence, for any~$j\in\{1,\dots,k\}$, 
$$ r\,(x_{j-1})^{-\frac{1}{n}}\ge
\overline{C}^{-\frac{1}{n}},$$
thus we deduce from~\eqref{BASE}
that
\begin{eqnarray*} 
&&x_j=f(R_o+2(j-1)r+2r) \ge f(R_o+2(j-1)r)+\frac{2r}{C}
\,\Big( f(R_o+2(j-1)r) \Big)^{\frac{n-1}{n}}\\&&\qquad
= x_{j-1}+\frac{2r}{C}\,(x_{j-1})^{\frac{n-1}{n}}\ge
x_{j-1} \,\left(1+\frac{1}{2C\,\overline C^{\frac1n}}\right).
\end{eqnarray*}
Iterating, we thus obtain
$$ x_k\ge x_{0} 
\,\left(1+\frac{1}{2C\,\overline C^{\frac1n}}\right)^k,$$
that establishes~\eqref{DENS:EQ4}.
This completes the proof of Theorem~\ref{DENSITY}.
\end{proof}

Now we address the compactness and lack of regularity issues
exemplified in Theorem~\ref{FAIL:COM}:

\begin{proof}[Proof of Theorem~\ref{FAIL:COM}]
We start with some preliminary observations.
First of all, if we denote by~$\{e_1,\dots,e_n\}$
the Euclidean basis of~$\R^n$, it is clear that
\begin{equation}\label{FAC:FAC}
{\mathcal{L}}^n\big( B_{1/8}( e_1/2)\cap (B_1\setminus B_{1/2})\big)>0
{\mbox{ and }}
{\mathcal{L}}^n\big( B_{1/8}( e_1)\cap (B_1\setminus B_{1/2})\big)>0.
\end{equation}
Moreover, there exists a constant~$c_\star>0$,
only depending on~$n$, such that, for any~$x\in\overline{B_{3/2}}$
it holds that
\begin{equation}\label{GEO:bal}
{\mathcal{L}}^n\big( B_1(x)\cap (B_1\setminus B_{1/2})\big)\ge c_\star.
\end{equation}
To prove~\eqref{GEO:bal}, we argue for a contradiction: if not,
there exists a sequence of points~$x_k\in\overline{B_{3/2}}$
such that
\begin{equation}\label{GEO:bal:2}
{\mathcal{L}}^n\big( B_1(x_k)\cap (B_1\setminus B_{1/2})\big)\le\frac1k.
\end{equation}
Up to a subsequence, we may assume that~$x_k\to \bar x$
as~$k\to+\infty$, for some~$\bar x\in\overline{B_{3/2}}$,
and, passing to the limit~\eqref{GEO:bal:2}, we obtain that
\begin{equation}\label{GEO:bal:3}
{\mathcal{L}}^n\big( B_1(\bar x)\cap (B_1\setminus B_{1/2})\big)=0.
\end{equation}
Up to a rotation, we can assume that~$\bar x$ is parallel to~$e_1$,
namely~$\bar x=\lambda e_1$, for some~$\lambda\in\left[0,\frac32\right]$.
We define
$$\lambda_\star:=\left\{ \begin{matrix}
1/2 & {\mbox{ if }}\lambda\in\left[0,\frac34\right]\cr
\cr
1 & {\mbox{ if }}\lambda\in\left(\frac34,\frac32\right].
\end{matrix}\right.$$
Notice that, by~\eqref{FAC:FAC}, we have that
\begin{equation}\label{2:FAC:FAC}
{\mathcal{L}}^n\big( 
B_{1/8}( \lambda_\star\,e_1)\cap (B_1\setminus B_{1/2})\big)>0
.\end{equation}
In addition, we note that~$|\bar x-\lambda_\star e_1|=
|\lambda-\lambda_\star|\le 1/2$.
Consequently, if~$p\in B_{1/8}(\lambda_\star e_1)$, we have that~$
|\bar x-p|\le |\bar x-\lambda_\star e_1|+
|\lambda_\star e_1-p|\le \frac12+\frac18<1$,
which gives that~$B_{1/8}(\lambda_\star e_1)\subseteq B_1(\bar x)$.

Therefore, we conclude that~$
B_{1/8}( \lambda_\star\,e_1)\cap (B_1\setminus B_{1/2})\subseteq
B_{1}( \bar x)\cap (B_1\setminus B_{1/2})$.
{F}rom this and~\eqref{2:FAC:FAC}, we obtain that~${\mathcal{L}}^n\big( 
B_{1}( \bar x)\cap (B_1\setminus B_{1/2})\big)>0$,
and this is in contradiction with~\eqref{GEO:bal:3}.
The proof of~\eqref{GEO:bal} is thus completed.

We also notice that, by scaling~\eqref{GEO:bal}, it holds that,
for any~$x\in\overline{B_{3r/2}}$,
\begin{equation}\label{GEO:bal:BIS}
{\mathcal{L}}^n\big( B_r(x)\cap (B_r\setminus B_{r/2})\big)\ge c_\star\,r^n.
\end{equation}
Now we claim that there exists~$\delta_\star>0$, 
only depending on~$n$, such that
\begin{equation}\label{CIC}
\begin{split}
& {\mbox{if $H\subseteq B_r$ and }}
{\mathcal{L}}^n\big( H\cap (B_r\setminus B_{r/2})\big)\ge (1-\delta_\star)\,
{\mathcal{L}}^n(B_1)\,
\left(1-\frac{1}{2^n}\right)\,r^n,\\
&{\mbox{then }}\{0\}\cup\big((\partial H)\oplus B_r \big)\supseteq
\big(\partial(H\cup B_{r/2})\big)\oplus B_r.
\end{split}
\end{equation}
To prove this, let
\begin{equation}\label{quwyd9821wuiuqwgd}
x\in \big(\partial(H\cup B_{r/2})\big)\oplus B_r.\end{equation}
Our aim is to show that
\begin{equation}\label{CIC2}
{\mbox{either $x=0$ or }}
B_r(x)\cap (\partial H)\ne\varnothing,
\end{equation}
since this would imply that~$x\in\{0\}\cup\big((\partial H)\oplus B_r\big)$,
thus establishing~\eqref{CIC}.

Also, since~\eqref{CIC2} is obvious when~$x=0$, we can assume that
\begin{equation}\label{NONZ}
x\ne0.
\end{equation}
Notice that, from~\eqref{quwyd9821wuiuqwgd}, we know that there exists~$y\in B_r(x)\cap \big(\partial(H\cup B_{r/2})\big)$.
Consequently, we can find~$\xi_k\in (H\cup B_{r/2})$
and~$\eta_k\in (\R^n\setminus H)\cap (\R^n\setminus B_{r/2})$
with the property that~$\xi_k\to y$ and~$\eta_k\to y$ as~$k\to+\infty$.

We observe that~$\eta_k\in\R^n\setminus H$: hence, if~$\xi_k\in H$,
it follows that~$y\in\partial H$ and so~\eqref{CIC2} holds true.
Therefore, we can restrict ourselves to the case in which~$\xi_k\in
(B_{r/2}\setminus H)$.
In particular
$$ \frac{r}2\le\lim_{k\to+\infty} |\eta_k|=|y|=\lim_{k\to+\infty} |\xi_k|\le \frac{r}2,$$
and so~$y\in\partial B_{r/2}$.

Consequently, we see that~$|x|\le |y|+|x-y|\le \frac{r}{2}+r=\frac{3r}2$,
and so we are in the position of exploiting~\eqref{GEO:bal:BIS}.
Accordingly, we have that
\begin{equation} \label{CVALXA}
{\mathcal{L}}^n\big( B_r(x)\cap (B_r\setminus B_{r/2})\big)\ge c_\star\,r^n.\end{equation}
In addition, from the hypothesis of~\eqref{CIC}, we find that
\begin{eqnarray*}
{\mathcal{L}}^n(B_1)\,
\left(1-\frac{1}{2^n}\right)\,r^n&=&
{\mathcal{L}}^n (B_r\setminus B_{r/2})\\
&=&
{\mathcal{L}}^n\big( (B_r\setminus B_{r/2})\cap H\big)
+{\mathcal{L}}^n\big( (B_r\setminus B_{r/2})\setminus H\big)\\&
\ge& (1-\delta_\star)\,
{\mathcal{L}}^n(B_1)\,
\left(1-\frac{1}{2^n}\right)\,r^n
+{\mathcal{L}}^n\big( (B_r\setminus B_{r/2})\setminus H\big).
\end{eqnarray*}
This says that
$$ {\mathcal{L}}^n\big( (B_r\setminus B_{r/2})\setminus H\big)\le
\delta_\star\,
{\mathcal{L}}^n(B_1)\,
\left(1-\frac{1}{2^n}\right)\,r^n\le \frac{c_\star}2\,r^n,$$
as long as we choose~$\delta_\star$ appropriately small.
Thus, recalling~\eqref{CVALXA}, we find that
\begin{eqnarray*} 
c_\star\,r^n&\le&
{\mathcal{L}}^n\big( B_r(x)\cap (B_r\setminus B_{r/2})\big)\\
&\le&{\mathcal{L}}^n\big( B_r(x)\cap (B_r\setminus B_{r/2})\cap H\big)
+{\mathcal{L}}^n\Big( \big( B_r(x)\cap (B_r\setminus B_{r/2})\big)\setminus H\Big)\\
&\le&
{\mathcal{L}}^n\big( B_r(x)\cap (B_r\setminus B_{r/2})\cap H\big)
+{\mathcal{L}}^n\big( (B_r\setminus B_{r/2})\setminus H\big)\\
&\le&
{\mathcal{L}}^n\big( B_r(x)\cap (B_r\setminus B_{r/2})\cap H\big)+
\frac{c_\star}2\,r^n,\end{eqnarray*}
which gives that~$ {\mathcal{L}}^n\big( B_r(x)\cap (B_r\setminus B_{r/2})\cap H\big)\ge
\frac{c_\star}2\,r^n$.
In particular, we have that
\begin{equation}\label{NONV}
B_r(x)\cap H\ne\varnothing.\end{equation}
So, we claim that
\begin{equation}\label{NONV2}
B_r(x)\cap (\partial H)\ne\varnothing.\end{equation}
To prove~\eqref{NONV2}, we
suppose the contrary, namely that~$B_r(x)\cap (\partial H)=\varnothing$.
Then, from~\eqref{NONV}
we have that~$B_r(x)\subseteq H$.
In particular, recalling~\eqref{NONZ},
we have that, if~$p_j:=x+\left(r-\frac{1}j\right)\frac{x}{|x|}$, then
$$ |p_j-x|=\left| r-\frac{1}{j}\right|=r-\frac1j<r,$$
for large~$j$. Accordingly, we obtain that~$
p_j\in B_r(x)\subseteq H\subseteq B_r$,
where one assumption in~\eqref{CIC} has been used
for the latter inclusion. 

So, we have found that
$$ r\ge \lim_{j\to+\infty}|p_j|=\lim_{j\to+\infty}
\left|x+\left(r-\frac{1}j\right)\frac{x}{|x|}\right|
=\lim_{j\to+\infty}\left| |x|+\left(r-\frac{1}j\right)\right|=|x|+r.$$
This is a contradiction with~\eqref{NONZ}, and so we have
proved~\eqref{NONV2}.

Then, since~\eqref{NONV2} implies~\eqref{CIC2},
we have thus completed the proof of~\eqref{CIC}.
\bigskip

Now, we deal with the core of the proof of Theorem~\ref{FAIL:COM}.
For this, we observe that
\begin{equation}\label{uip:011}
\begin{split}
{\mathcal{F}}_K(B_r\setminus B_{r/2})\,&:=
\Per_r(B_r\setminus B_{r/2})
-K\,{\mathcal{L}}^n(B_r\setminus B_{r/2})\\
&=\frac{ {\mathcal{L}}^n\big( (\partial(B_r\setminus B_{r/2}))
\oplus B_r\big)}{2r}
- K\,{\mathcal{L}}^n(B_r\setminus B_{r/2})\\
&= 
\frac{ {\mathcal{L}}^n(B_{2r})}{2r}
- K\,{\mathcal{L}}^n(B_r\setminus B_{r/2})\\
&= 2^{n-1}\,{\mathcal{L}}^n(B_1) \,r^{n-1}
-  {\mathcal{L}}^n(B_1)\,\left(1-\frac{1}{2^n}\right)\,K r^n
.\end{split}
\end{equation}
Now we claim that
\begin{equation}\label{77:TC67H11E}
{\mathcal{F}}_K(B_r\setminus B_{r/2})\le {\mathcal{F}}_K(E)\end{equation}
for any bounded set~$E\subseteq\R^n$. To this end,
we distinguish two cases, namely
\begin{eqnarray}
\label{CC:caso1} {\mbox{either}} && 
{\mathcal{L}}^n\big(E\cap (B_r\setminus B_{r/2})\big)\le
(1-\delta_\star)\,
{\mathcal{L}}^n(B_1)\,
\left(1-\frac{1}{2^n}\right)\,r^n\\
{\mbox{ or }}&&\label{CC:caso2}
{\mathcal{L}}^n\big(E\cap (B_r\setminus B_{r/2})\big)>(1-\delta_\star)\,
{\mathcal{L}}^n(B_1)\,
\left(1-\frac{1}{2^n}\right)\,r^n,\end{eqnarray} 
being~$\delta_\star$ the constant in~\eqref{CIC}.

When~\eqref{CC:caso1} holds true, we have that
$$ -{\mathcal{F}}_K(E)\le 
K\,{\mathcal{L}}^n\big( E\cap(B_r\setminus B_{r/2}
)\big) \le 
(1-\delta_\star)\, {\mathcal{L}}^n(B_1)\,\left(1-\frac{1}{2^n}\right)\,K\,r^n.$$
Accordingly, from~\eqref{uip:011}, we have that
\begin{eqnarray*}
&& {\mathcal{F}}_K(B_r\setminus B_{r/2})-{\mathcal{F}}_K(E)\\
&\le& 2^{n-1}\,{\mathcal{L}}^n(B_1) \,r^{n-1}
-  {\mathcal{L}}^n(B_1)\,\left(1-\frac{1}{2^n}\right)\,K r^n
+(1-\delta_\star)\,
{\mathcal{L}}^n(B_1)\,\,\left(1-\frac{1}{2^n}\right)\,K\,r^n\\
&=&
2^{n-1}\,{\mathcal{L}}^n(B_1) \,r^{n-1}
-  
\delta_\star\,{\mathcal{L}}^n(B_1)\,\left(1-\frac{1}{2^n}\right)\,K\,r^n
\\ &\le&0,\end{eqnarray*}
provided that~$K$ is large enough, as prescribed by~\eqref{K LARGE}.
This proves~\eqref{77:TC67H11E} when~\eqref{CC:caso1} holds true, hence we can now
focus on the case in which~\eqref{CC:caso2} is satisfied.

Thanks to~\eqref{CC:caso2}, we can exploit~\eqref{CIC} with
\begin{equation}\label{DEH} H:=E\cap B_r.\end{equation}
In this way, setting
\begin{equation}\label{DEH2} G:= H\cup B_{r/2},\end{equation} we have that
$ \{0\}\cup\big((\partial H)\oplus B_r \big)\supseteq
(\partial G)\oplus B_r$.
In particular, we have that
\begin{equation} \label{102weuhfqowidjblP}
\Per_r(H)\ge \Per_r(G).\end{equation}
We also point out that
$ {\mathcal{L}}^n\big( G\cap (B_{r}\setminus B_{r/2})\big)
={\mathcal{L}}^n\big( H\cap (B_{r}\setminus B_{r/2})\big)$,
thanks to~\eqref{DEH2}. Hence, exploiting~\eqref{102weuhfqowidjblP}, we find that
\begin{equation}\label{della F}
{\mathcal{F}}_K(H)\ge{\mathcal{F}}_K(G).
\end{equation}
In addition, we claim that
\begin{equation} 
\label{BAL}
\Per_r(H)\le \Per_r(E).\end{equation}
To check this, we recall (see formulas~(2.4)-(2.5) in~\cite{MR3023439}) that
\begin{equation} \label{4thsdoio3r}
\Per_r(E\cap B_r)+\Per_r(E\cup B_r)\le\Per_r(E)+
\Per_r(B_r).\end{equation}
Let now~$R\ge r$ be such that~${\mathcal{L}}^n(E\cup B_r)=
{\mathcal{L}}^n(B_R)$. Then, from
the isoperimetric inequality in~\eqref{FORMULA ISP}, we see that
$$ \Per_r(E\cup B_r)\ge\Per_r(B_R)
=\frac{{\mathcal{L}}^n(B_{R+r})}{2r}\ge
\frac{{\mathcal{L}}^n(B_{2r})}{2r}=\Per_r(B_r)
.$$
Hence, we insert this inequality into~\eqref{4thsdoio3r}
and we obtain~\eqref{BAL}, as desired.

We also notice that, by~\eqref{DEH}, we have that~$ {\mathcal{L}}^n\big(H\cap(B_r\setminus B_{r/2})\big)=
{\mathcal{L}}^n\big(E\cap(B_r\setminus B_{r/2})\big)$,
and so
\begin{equation}\label{uip:012}
{\mathcal{F}}_K(H)\le
{\mathcal{F}}_K(E),
\end{equation}
thanks to~\eqref{BAL}.

Let also~$\rho\ge0$ be such that
\begin{equation}\label{VOL:G}
{\mathcal{L}}^n(G)=
{\mathcal{L}}^n(B_\rho).
\end{equation}
We point out that, by~\eqref{DEH} and~\eqref{DEH2},
\begin{equation}\label{356}
B_{r/2}\subseteq G\subseteq B_r,
\end{equation}
and so
\begin{equation}\label{JK:89:00:02}
\rho\in \left[ \frac{r}2,r\right].\end{equation}
Also, making use of the isoperimetric inequality in~\eqref{FORMULA ISP},
we see that
\begin{equation}\label{ST:GSOTTL:1}
\Per_r(G)\ge \Per_r(B_\rho)=\frac{
{\mathcal{L}}^n(B_{r+\rho})
}{2r}=\frac{{\mathcal{L}}^n (B_1)\,(r+\rho)^n}{2r}.
\end{equation}
Furthermore,
\begin{equation}\label{ST:GSOTTL:2}\begin{split}&
{\mathcal{L}}^n \big( G\cap(B_r\setminus B_{r/2})\big)=
{\mathcal{L}}^n ( G\cap B_r)-
{\mathcal{L}}^n (G\cap B_{r/2})
= {\mathcal{L}}^n (G)-{\mathcal{L}}^n (B_{r/2})
\\ &\quad\quad=
{\mathcal{L}}^n (B_\rho)-{\mathcal{L}}^n (B_{r/2})=
{\mathcal{L}}^n (B_1)\,\left(
\rho^n-\left(\frac{r}{2}\right)^n\right),\end{split}\end{equation}
thanks to~\eqref{VOL:G} and~\eqref{356}.

Hence, by~\eqref{ST:GSOTTL:1} and~\eqref{ST:GSOTTL:2},
we have that
\begin{equation}\label{fc0019e090}
{\mathcal{F}}_K(G) \ge 
\frac{{\mathcal{L}}^n (B_1)\,(r+\rho)^n}{2r}
-{\mathcal{L}}^n (B_1)\,K\,\left(
\rho^n-\left(\frac{r}{2}\right)^n\right) =:\Phi(\rho).
\end{equation}
We notice that, for any~$t\in \left[ \frac{r}2,\,r\right]$, 
\begin{eqnarray*} \Phi'(t)&=& n\,{\mathcal{L}}^n (B_1)\,\left(
\frac{(r+t)^{n-1}}{2r}
-K\, t^{n-1}
\right)\\&=&
n\,{\mathcal{L}}^n (B_1)\,t^{n-1}\left(
\frac{1}{2r} \,\left(\frac{r}{t}+1\right)^{n-1}
-K
\right)\\ &\le&
n\,{\mathcal{L}}^n (B_1)\,t^{n-1}\left(
\frac{1}{2r} \,\left(\frac{r}{r/2}+1\right)^{n-1}
- K
\right)\\&\le&0,
\end{eqnarray*}
as long as~$K$ is large enough, as prescribed in~\eqref{K LARGE}.
Therefore, recalling~\eqref{uip:011}
and~\eqref{JK:89:00:02}, we have that
$$ {\mathcal{F}}_K(B_r\setminus B_{r/2})=
{\mathcal{L}}^n (B_1)\,\left[ 2^{n-1} r^{n-1}
-K\,\left(
1-\frac{1}{2^n}\right)\,r^n\right]
=
\Phi(r)=\inf_{t\in\left[ \frac{r}2,\,r\right]}\Phi(t)\le
\Phi(\rho).$$
Hence, we insert this information into~\eqref{fc0019e090},
and we conclude that~${\mathcal{F}}_K(G)\ge {\mathcal{F}}_K
(B_r\setminus B_{r/2})$.
{F}rom this, \eqref{uip:012} and~\eqref{della F}, we conclude that
$$ {\mathcal{F}}_K(E)\ge {\mathcal{F}}_K(H)\ge {\mathcal{F}}_K(G)
\ge {\mathcal{F}}_K(B_r\setminus B_{r/2}),$$
which completes the proof of~\eqref{77:TC67H11E}.

Now, for any (arbitrarily ugly) set~$U\subseteq B_{r/2}$,
we set~$E_U:= (B_r\setminus B_{r/2})\cup U$. We notice
that~$ (\partial E_U)\oplus B_r = B_{2r}=
(B_r\setminus B_{r/2})\oplus B_r$,
and also~$ {\mathcal{L}}^n\big(E_U\cap (B_r\setminus B_{r/2})\big)={\mathcal{L}}^n
(B_r\setminus B_{r/2})$,
and therefore
$$ {\mathcal{F}}_K(E_U)=
{\mathcal{F}}_K(B_r\setminus B_{r/2}).$$
Hence, from~\eqref{77:TC67H11E}, we have that~$E_U$ is
also a minimizer for~${\mathcal{F}}_K$, from which the claims
in Theorem~\ref{FAIL:COM} plainly follow.
\end{proof}


\section{Planelike minimizers in periodic media -- Proof of
Theorem~\ref{PLANELIKE:TH}} \label{SA:1023}

In this section we establish the existence 
of planelike minimizers for periodic
volume perturbations of~$\Per_r$.

\begin{proof}[Proof of Theorem~\ref{PLANELIKE:TH}]
The proof is given in two steps: in the first one, we fix a rational slope $\omega$ and we provide the construction of 
a planelike minimizer $E^*_\omega$ which is also $\omega$- periodic.  Then, in the second step, we consider irrational slopes by means of an approximation procedure.
\smallskip

{\bf Step 1: construction of planelike minimizers with rational slope}.
The idea of the proof is to perform an argument based on a constrained
minimal minimizer procedure, as in~\cite{MR1852978}.
A major difference with~\cite{MR1852978} here is that
optimal density estimates at small scales do not hold,
hence the width of the strip may depend, in principle,
on~$r$. Indeed, roughly speaking, here one needs an initial density
to improve the density in the large, and so, to let the
density reach a uniform threshold, a large number (in dependence of~$r$)
of fundamental cubes
may be needed, and
this has a rather
strong consequence on the energy estimates when~$r$ is small.

Hence, the proof of this step will be performed in two parts: 
first, we obtain
an initial bound on the width of the strip that depends on~$r$,
and then we improve this bound up to a uniform scale.
This method will combine
the minimal minimizer argument in~\cite{MR1852978} with
an ad-hoc procedure of finely selecting appropriating cubes
and performing a cut at a suitable level.
These estimates will be based on a fine analysis of cubes,
to detect local densities and energy contributions.

The details of the proof go like this.
We consider a ``fundamental domain'' for the~$\omega$-periodicity,
i.e. we take~$K_1,\dots,K_{n-1}\in\Z^n$ which are linearly independent
and such that~$\omega\cdot K_j=0$ for any~$j\in\{1,\dots,n-1\}$, and we set
$$ F_\omega := \big\{ t_1 K_1+\dots+t_{n-1}K_{n-1},\quad\;
t_1,\dots,t_{n-1}\in (0,1)\big\}.$$
Notice that the existence of~$K_1,\dots,K_{n-1}$
is a consequence of the rationality of~$\omega$ in~\eqref{RAT}.

Given~$M\ge2$, we also consider the parallelepipedon
\begin{eqnarray*} S_{\omega,M}&:=& \big\{ t_1 K_1+\dots+t_{n-1}K_{n-1}+t_n\omega,\quad\;
t_1,\dots,t_{n-1}\in (0,1), \quad t_n\in(-M,M)\big\}\\
&=& \big\{ p+t_n\omega,\quad\;p\in F_\omega
, \quad t_n\in(-M,M)\big\}.
\end{eqnarray*}
We consider the functional
$$ {\mathcal{F}}_{\omega,M}(E):=
\Per_r(E, S_{\omega,2M})+\int_{E\cap S_{\omega,2M}} g(x)\,dx.$$
We now introduce the set of periodic constrained minimizers
for this functional. Namely we define~${\mathcal{C}}_{\omega,M}$
the family of sets~$E\subseteq\R^n$ which are~$\omega$-periodic and such
that
$$
\{\omega\cdot x\le -M\}\subseteq E\subseteq
\{\omega\cdot x\le M\}. $$
Let also~$L_\omega:=\{\omega\cdot x\le0\}$.
Then
\begin{equation}\label{6rtd76tr3gyuaj}
\Per_r(L_\omega, S_{\omega,2M})\le C {\mathcal{H}}^{n-1}(F_\omega),\end{equation}
for some~$C>0$.

We also consider the family of finite overlapping dilated cubes
$$
{\mathcal{Q}}:=\{ j+[0,n]^n,\quad j\in\Z^n\}.$$
We define~${\mathcal{Q}}_M$
the family of
cubes~$Q\in {\mathcal{Q}}$ which intersect~$\{\omega\cdot x=\pm M\}$.
The fact that~$g$ has zero average in each~$Q\in{\mathcal{Q}}$ implies that
$$ \left| \int_{E\cap S_{\omega,2M}} g(x)\,dx\right|\le
\sum_{Q\in {\mathcal{Q}}_{2M}} \int_{Q} |g(x)|\,dx\le
\|g\|_{L^\infty(\R^n)}\,\sum_{Q\in {\mathcal{Q}}_{2M}}{\mathcal{L}}^n(Q)
\le C\eta\, {\mathcal{H}}^{n-1}(F_\omega),
$$
up to renaming~$C>0$, and therefore,
in view of~\eqref{6rtd76tr3gyuaj}, it holds that
\begin{equation}\label{PIANO MI}
{\mathcal{F}}_{\omega,M}(L_\omega)\le C {\mathcal{H}}^{n-1}(F_\omega).
\end{equation}
This says that there exists at least one set in~${\mathcal{C}}_{\omega,M}$
with finite energy, hence we can proceed to the minimization of the functional.
The existence of the minimum in this case follows along the lines
of Theorem~\ref{DEBP}.

So we define~${\mathcal{M}}_{\omega,M}$
as the family of sets~$E\in{\mathcal{C}}_{\omega,M}$ such that
$$ {\mathcal{F}}_{\omega,M}(E)=\inf_{ F\in {\mathcal{C}}_{\omega,M}}
{\mathcal{F}}_{\omega,M}(F).$$
Following a classical idea of~\cite{MR1852978}, we now define
the minimal minimizer as
$$ E^*_{\omega,M}:=\bigcap_{E\in {\mathcal{M}}_{\omega,M}} E.$$
We remark
that
\begin{equation}\label{CAPCUP} \Per_r(E\cap F,\Omega)+
\Per_r(E\cup F,\Omega)\le \Per_r(E,\Omega)+\Per_r(F,\Omega),\end{equation}
for any~$E$, $F\subseteq\R^n$ and any domain~$\Omega$,
and thus
\begin{equation}\label{Lia18}
{\mathcal{F}}_{\omega,M}(E\cap F)+
{\mathcal{F}}_{\omega,M}(E\cup F)\le
{\mathcal{F}}_{\omega,M}(E)+
{\mathcal{F}}_{\omega,M}(F).
\end{equation}
By~\eqref{Lia18}, we have that~$E^*_{\omega,M}\in
{\mathcal{M}}_{\omega,M}$, that is the minimal minimizer is indeed a minimizer.
Moreover, $E^*_{\omega,M}$ satisfies the inclusion properties (for a proof of this we refer to \cite[Lemma 6.5]{MR1852978})
\begin{equation}\label{BIRL}
\begin{split}&
{\mbox{if $k\in\Z^n$ and~$\omega\cdot k\le 0$, then~$E^*_{\omega,M}+k
\subseteq E^*_{\omega,M}$;}}\\
&
{\mbox{if $k\in\Z^n$ and~$\omega\cdot k\ge 0$, then~$E^*_{\omega,M}+k
\supseteq E^*_{\omega,M}$.}}
\end{split}
\end{equation}
Consequently, since~$E^*_{\omega,M}$ is the smallest possible minimizers,
\begin{equation*}
\begin{split}&
{\mbox{if $B_{n}(p)\cap E^*_{\omega,M}=\varnothing$,
then~$E^*_{\omega,M}\subseteq \{ \omega\cdot(p-x)\le n\}$}}\\
{\mbox{and }}\quad
&
{\mbox{if $B_{n}(p)\subseteq E^*_{\omega,M}$,
then~$E^*_{\omega,M}\supseteq \{ \omega\cdot(p-x)\ge -n\}$.}}
\end{split}
\end{equation*}
We now divide the cubes in~${\mathcal{Q}}$ according to their
``color'', i.e. their density properties with respect to the set~$E^*_{\omega,M}$
(pictorially, we think that the set~$E^*_{\omega,M}$ is ``black''
and its complement is ``white'').

Namely, we consider the ``family of black cubes'' given by
$$ {\mathcal{Q}}_{{\rm{Bl}}} := \{ Q\in{\mathcal{Q}} {\mbox{ s.t. }}Q\subseteq E^*_{\omega,M}
\}$$
and
the ``family of white cubes''
$$ {\mathcal{Q}}_{\rm{{Wh}}} := \{ Q\in{\mathcal{Q}} {\mbox{ s.t. }}
Q\cap E^*_{\omega,M}=\varnothing
\}.$$
We also take into account the ``family of grey cubes''
\begin{eqnarray*}
{\mathcal{Q}}_{\rm{{Gr}}}&:=&
{\mathcal{Q}}\setminus\big( {\mathcal{Q}}_{\rm{{Bl}}}\cup
{\mathcal{Q}}_{\rm{{Wh}}}\big)\\
&=&
 \{ Q\in{\mathcal{Q}} {\mbox{ s.t. }}
 Q\setminus E^*_{\omega,M}\ne\varnothing
{\mbox{ and }}
Q\cap E^*_{\omega,M}\ne\varnothing
\}.
\end{eqnarray*}
We also subdivide the grey cubes into the ones which
are ``foggy black'' and the ones which are ``foggy white'':
the first family contains cubes with a sufficient density of~$E^*_{\omega,M}$,
while the second family contains cubes with a sufficient density of
the complement of~$E^*_{\omega,M}$, being the notion of
``sufficient density'' the one compatible with uniform scales
in the density estimates of Theorem~\ref{DENSITY}.
That is, we define
\begin{eqnarray*}
&&{\mathcal{Q}}_{\rm{{f.Bl}}} 
:=
\{ Q\in{\mathcal{Q}}_{\rm{{Gr}}} {\mbox{ s.t. }}
{\mathcal{L}}^n( Q\cap E^*_{\omega,M})\ge r^n
\}\\{\mbox{and }}&&
{\mathcal{Q}}_{\rm{{f.Wh}}}:=
\{ Q\in{\mathcal{Q}}_{\rm{{Gr}}} {\mbox{ s.t. }}
{\mathcal{L}}^n( Q\setminus E^*_{\omega,M})\ge r^n
\}.\end{eqnarray*}
Notice that, since~$r\in(0,1)$,
$$ {\mathcal{Q}}_{\rm{{Gr}}}=
{\mathcal{Q}}_{\rm{{f.Bl}}}\cup{\mathcal{Q}}_{\rm{{f.Wh}}}.$$
On the other hand, in general, we have that~${\mathcal{Q}}_{\rm{{f.Bl}}} \cap
{\mathcal{Q}}_{\rm{{f.Wh}}}\ne\varnothing$, since there might be cubes
with sufficiently high density of both~$E^*_{\omega,M}$ and its complement
(these cubes are, in some sense, ``multicolored'' inside). So, we define
\begin{eqnarray*}
{\mathcal{Q}}_{\rm{{Mu}}}&:=&
\{ Q\in {\mathcal{Q}}_{\rm{{f.Bl}}}\cap{\mathcal{Q}}_{\rm{{f.Wh}}}\}
\\&=&
\Big\{ Q\in{\mathcal{Q}}_{\rm{{Gr}}} {\mbox{ s.t. }} \min\big\{
{\mathcal{L}}^n( Q\cap E^*_{\omega,M}),\,
{\mathcal{L}}^n( Q\setminus E^*_{\omega,M})\big\}\ge r^n
\Big\}.
\end{eqnarray*}
Notice that the cubes in~${\mathcal{Q}}_{\rm{{f.Bl}}}\setminus {\mathcal{Q}}_{\rm{{Mu}}}$
have a sufficiently high density of~$E^*_{\omega,M}$ and a rather low
density of its complement, so they ``look almost black''. For this reason,
we set
\begin{eqnarray*}
{\mathcal{Q}}_{\rm{{a.Bl}}}&:=&
\{ Q\in {\mathcal{Q}}_{\rm{{f.Bl}}}\setminus{\mathcal{Q}}_{\rm{{Mu}}}\}
\\&=&
\big\{ Q\in{\mathcal{Q}}_{\rm{{Gr}}} {\mbox{ s.t. }} 
{\mathcal{L}}^n( Q\cap E^*_{\omega,M})\ge r^n>
{\mathcal{L}}^n( Q\setminus E^*_{\omega,M})
\big\}.
\end{eqnarray*}
Similarly, we define the family of almost white cubes as
\begin{eqnarray*}
{\mathcal{Q}}_{\rm{{a.Wh}}}&:=&
\{ Q\in {\mathcal{Q}}_{\rm{{f.Wh}}}\setminus{\mathcal{Q}}_{\rm{{Mu}}}\}
\\&=&
\big\{ Q\in{\mathcal{Q}}_{\rm{{Gr}}} {\mbox{ s.t. }} 
{\mathcal{L}}^n( Q\setminus E^*_{\omega,M})\ge r^n>
{\mathcal{L}}^n( Q\cap E^*_{\omega,M})
\big\}.
\end{eqnarray*}
We are now going to show that the strip is divided
into five ordered ``color layers'': on the bottom stay all the black cubes,
then the almost black ones, then cubes of multicolor type, then almost white cubes
and finally white cubes on the top (rigorous statements below).
We also estimate carefully the width of these layers.

To this end, we observe that the ``color density'' of the cubes
is monotone with respect to~$\omega$, in the sense that
the color of an upper translation is more pale than the color of a lower translation.
The precise statement goes as follows: we claim that, for any~$k\in\Z^n$
with~$\omega\cdot k\ge0$, we have that
\begin{equation}\label{COLO}
{\mathcal{L}}^n \big((Q+k)\cap E^*_{\omega,M}\big)\le
{\mathcal{L}}^n \big(Q\cap E^*_{\omega,M}\big)\le
{\mathcal{L}}^n \big((Q-k)\cap E^*_{\omega,M}\big).
\end{equation}
To check this, we exploit~\eqref{BIRL} to see that~$
E^*_{\omega,M}-k\subseteq E^*_{\omega,M}\subseteq E^*_{\omega,M}+k$ and therefore
\begin{eqnarray*}&& \big( (Q+k)\cap E^*_{\omega,M}\big)-k=
Q\cap (E^*_{\omega,M}-k) \subseteq Q\cap E^*_{\omega,M}
\\{\mbox{and
}}&& \big( (Q-k)\cap E^*_{\omega,M}\big)+k=
Q\cap (E^*_{\omega,M}+k) \supseteq Q\cap E^*_{\omega,M}.
\end{eqnarray*}
Accordingly
\begin{eqnarray*}&& 
{\mathcal{L}}^n
\big( (Q+k)\cap E^*_{\omega,M}\big)={\mathcal{L}}^n
\Big(
\big( (Q+k)\cap E^*_{\omega,M}\big)-k\Big)\le
{\mathcal{L}}^n
\big(
Q\cap E^*_{\omega,M}\big)
\\{\mbox{and
}}&& 
{\mathcal{L}}^n
\big( (Q-k)\cap E^*_{\omega,M}\big) 
=
{\mathcal{L}}^n
\Big(
\big( (Q-k)\cap E^*_{\omega,M}\big)+k\Big)\geq
{\mathcal{L}}^n
\big(Q\cap E^*_{\omega,M}\big),
\end{eqnarray*}
thus proving~\eqref{COLO}.

As a consequence of~\eqref{COLO}, we have that,
for any~$k\in\Z^n$ with~$\omega\cdot k\ge0$,
\begin{equation}\label{LAYERS}
\begin{split}
& {\mbox{if $Q\in {\mathcal{Q}}_{\rm{{Bl}}}$,
then~$Q+k\in {\mathcal{Q}}_{\rm{{Bl}}}\cup
{\mathcal{Q}}_{\rm{{Gr}}}\cup {\mathcal{Q}}_{\rm{{Wh}}}$,}}\\
& {\mbox{if $Q\in {\mathcal{Q}}_{\rm{{Gr}}}$,
then~$Q+k\in 
{\mathcal{Q}}_{\rm{{Gr}}}\cup {\mathcal{Q}}_{\rm{{Wh}}}$,}}\\
& {\mbox{if $Q\in {\mathcal{Q}}_{\rm{{Wh}}}$,
then~$Q+k\in {\mathcal{Q}}_{\rm{{Wh}}}$,}}\\
& {\mbox{if $Q\in {\mathcal{Q}}_{\rm{{a.Bl}}}$,
then~$Q+k\in {\mathcal{Q}}\setminus {\mathcal{Q}}_{\rm{{Bl}}}$,}}\\
& {\mbox{if $Q\in {\mathcal{Q}}_{\rm{{a.Wh}}}$,
then~$Q+k\in {\mathcal{Q}}_{\rm{{a.Wh}}}\cup
{\mathcal{Q}}_{\rm{{Wh}}}$,}}
\end{split}
\end{equation}
and similar statements hold in the case~$\omega\cdot k\le0$.

We now point out that,
if~$Q\in {\mathcal{Q}}_{\rm{{Gr}}}$, then~$Q\cap(\partial 
E^*_{\omega,M})\ne\varnothing$ and therefore
\begin{equation}\label{PER:cube}
\Per_r(E^*_{\omega,M}\cap Q,S_{\omega,2M})\ge
c r^{n-1},\end{equation}
for some~$c>0$.
We also observe that, for any~$E\subseteq\R^n$,
\begin{equation}\label{g int men}
\left| \int_{E\cap Q}g(x)\,dx\right|\le
\|g\|_{L^\infty(\R^n)}\,\min\big\{ {\mathcal{L}}^n(E\cap Q),\,
{\mathcal{L}}^n(Q\setminus E)
 \big\}.\end{equation}
To check this, let us first suppose that~${\mathcal{L}}^n(E\cap Q)\le
{\mathcal{L}}^n(Q\setminus E)$. Then,
$$ \left| \int_{E\cap Q}g(x)\,dx\right|\le
\int_{E\cap Q}|g(x)|\,dx\le \|g\|_{L^\infty(\R^n)}\,{\mathcal{L}}^n(E\cap Q),$$
which gives~\eqref{g int men} in this case.
Conversely, if~${\mathcal{L}}^n(E\cap Q)>
{\mathcal{L}}^n(Q\setminus E)$ we use that~$g$ has zero average and we write
\begin{eqnarray*}&&\left| \int_{E\cap Q}g(x)\,dx\right|=
\left| \int_{ Q}g(x)\,dx-\int_{E\cap Q}g(x)\,dx\right|\\&&\qquad
=\left|
\int_{Q\setminus E}g(x)\,dx\right|\le \|g\|_{L^\infty(\R^n)}\,{\mathcal{L}}^n(Q\setminus E),\end{eqnarray*}
thus completing the proof of~\eqref{g int men}.

In view of~\eqref{PER:cube} and~\eqref{g int men}, we know that
\begin{equation}\label{GREYSTAR}
\begin{split}
&{\mbox{for any~$Q\in{\mathcal{Q}}_{\rm{{Gr}}}$, }}\\&\qquad
{\mathcal{F}}_{\omega,M}(E^*_{\omega,M}\cap Q,
S_{\omega,2M})\ge c r^{n-1}-\|g\|_{L^\infty(\R^n)}\,r^n\ge
r^{n-1} (c-\eta r)\ge\frac{c r^{n-1}}{2},\end{split}
\end{equation}
provided that~$\eta$ is small enough.

On the other hand, if~$Q\in{\mathcal{Q}}_{\rm{{Mu}}}$,
we are in the uniform density
setting of~\eqref{DENS:EQ3} and, consequently,
by~\eqref{DENS:EQ1} we can write that
\begin{equation}\label{unide}
\min\big\{
{\mathcal{L}}^n ( E^*_{\omega,M}\cap Q'),
{\mathcal{L}}^n ( Q'\setminus E^*_{\omega,M}),
\big\}
\ge c
,\end{equation}
up to renaming~$c>0$, where~$Q'$ is the dilation of~$Q$ with respect to
its center by a factor~$2$ (we stress that the condition~$Q\in{\mathcal{Q}}_{\rm{{Mu}}}$
has been used here to guarantee an initial estimate on the
density, which makes the constants in 
Theorem~\ref{DENSITY} uniform).

Then, from~\eqref{unide} and the relative isoperimetric
inequality in Theorem~\ref{ISOPER}, up to renaming~$c>0$, we have that
if~$Q\in{\mathcal{Q}}_{\rm{{Mu}}}$, then
\begin{equation*}
\Per_r(E^*_{\omega,M}\cap Q',S_{\omega,2M})\ge
c .\end{equation*}
Therefore
\begin{equation}\label{FOGGY}
\begin{split}
&{\mbox{for any~$Q\in{\mathcal{Q}}_{\rm{{Mu}}}$, }}\\&\qquad
{\mathcal{F}}_{\omega,M}(E^*_{\omega,M}\cap Q',S_{\omega,2M})\ge 
c-\|g\|_{L^\infty(\R^n)}\,{\mathcal{L}}^n(Q')\ge\frac{c}{2},\end{split}
\end{equation}
provided that~$\eta$ is small enough.

Now we denote by~$J_{\rm{{Mu}}}$, $J_{\rm{{a.Bl}}}$
and~$J_{\rm{{a.Wh}}}$ the number of cubes in~${\mathcal{Q}}_{\rm{{Mu}}}$, 
${\mathcal{Q}}_{\rm{{a.Bl}}}$
and~${\mathcal{Q}}_{\rm{{a.Wh}}}$, respectively.
Then, up to renaming constants, we deduce from~\eqref{GREYSTAR} and~\eqref{FOGGY}
that
\begin{equation*}
\begin{split}
{\mathcal{F}}_{\omega,M}(E^*_{\omega,M},S_{\omega,2M})
\ge cr^{n-1}\,
(J_{\rm{{a.Bl}}}+J_{\rm{{a.Wh}}})
+c\,J_{\rm{{Mu}}}.\end{split}
\end{equation*}
Comparing with~\eqref{PIANO MI} and using minimality, we thus obtain that
$$ r^{n-1}\,
(J_{\rm{{a.Bl}}}+J_{\rm{{a.Wh}}})
+c\,J_{\rm{{Mu}}}
\le C {\mathcal{H}}^{n-1}(F_\omega),$$
up to renaming~$C>0$.
Hence, in view of the layer structure described in~\eqref{LAYERS},
we have that 
\begin{equation}\label{STRISCIA1}
{\mbox{the family of cubes in~${\mathcal{Q}}_{\rm{{Mu}}}$
lies in a strip of width at most~$C$,}}\end{equation}
while 
\begin{equation}\label{STRISCIA2}
{\mbox{the families of cubes in~${\mathcal{Q}}_{\rm{{a.Bl}}}$
and in~${\mathcal{Q}}_{\rm{{a.Wh}}}$
lie in strips of width at most~$\frac{C}{r^{n-1}}$.}}\end{equation}
We observe that the bound in~\eqref{STRISCIA1} is already satisfactory,
but the one in~\eqref{STRISCIA2} needs to be improved if we want to arrive at a strip
of uniform width (independent of~$r$). 
That is, we are now in a situation in which ``almost white'' or ``almost black''
cubes may
have a long tail in the strip when~$r$ is small,
and we want to rule out this possibility.

\begin{figure}[h]
    \centering
    \includegraphics[width=7.5cm]{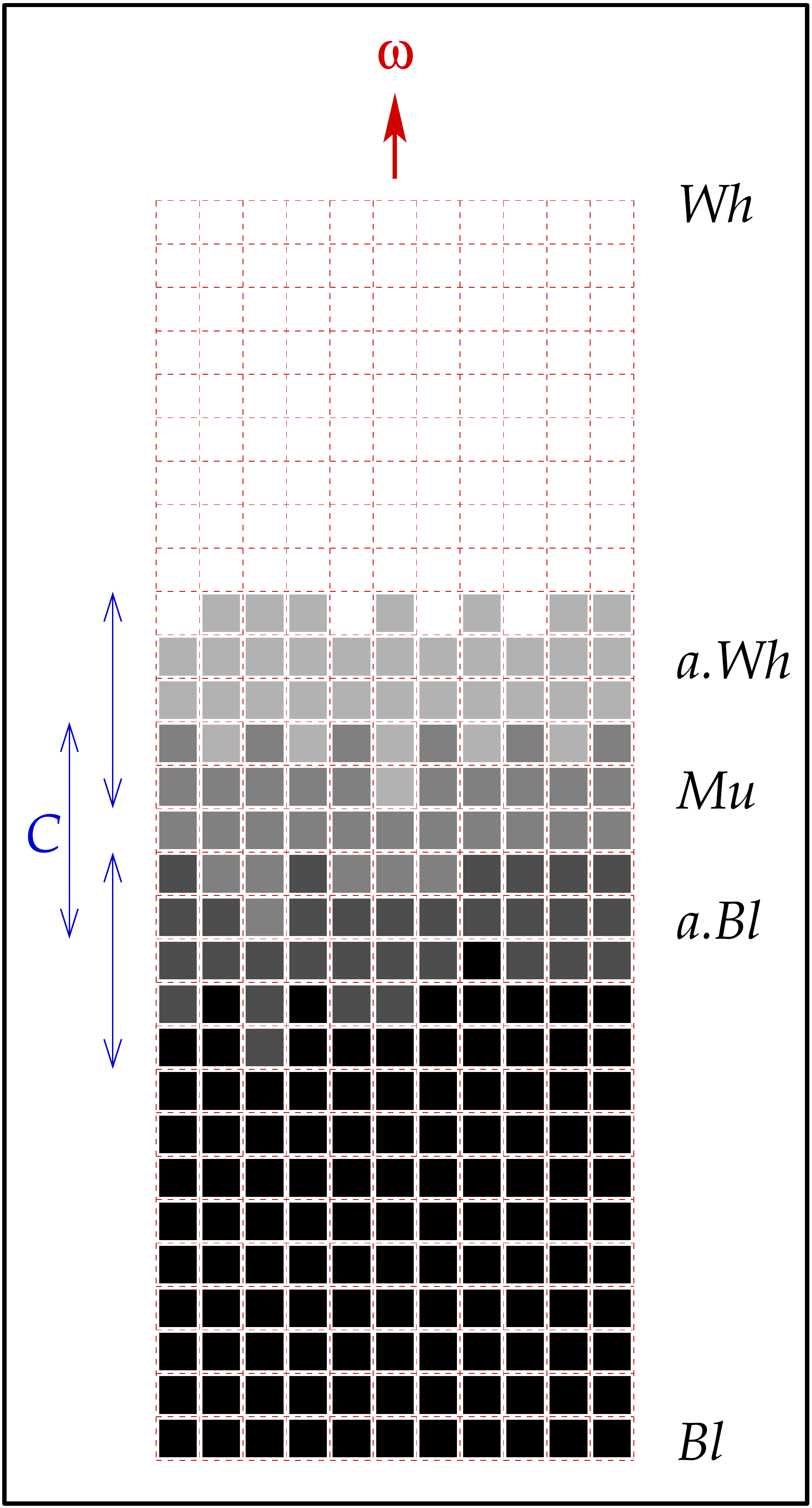}
    \caption{\it {{The geometry of the
colored cubes in~\eqref{STRISCIA1}
and~\eqref{STRISCIA3}.}}}
    \label{CUB}
\end{figure}

For this,
we need a careful procedure of cutting cubes in~${\mathcal{Q}}_{\rm{{a.Wh}}}$.
The idea is that once we have a cube which is ``almost white'' we can color
the region above it in full white, gaining energy. 

The goal is thus to replace~\eqref{STRISCIA2}
with
\begin{equation}\label{STRISCIA3}
{\mbox{the families of cubes in~${\mathcal{Q}}_{\rm{{a.Bl}}}$
and in~${\mathcal{Q}}_{\rm{{a.Wh}}}$
lie in strips of width at most~$C$,}}\end{equation}
up to renaming~$C$ (the situation of formulas~\eqref{STRISCIA1}
and~\eqref{STRISCIA3}
is graphically depicted in Figure~\ref{CUB}).
So, if we can bound the width in~\eqref{STRISCIA2}
with a uniform bound, we are done; otherwise
suppose that, for instance,
${\mathcal{Q}}_{\rm{{a.Wh}}}$ occupies a strip of width~$W_r\ge 2n$,
possibly depending on~$r$ (from~\eqref{STRISCIA2}, we only know that~$W_r\le C/r^{n-1}$),
say~$\{C_o\le \omega\cdot x\le C_o+W_r\}$
(notice that the position of the lower boundary of this strip
is uniformly bounded, thanks to~\eqref{STRISCIA1}, so we denoted it by~$C_o$
for the sake of clarity).

The idea is now to replace~$E^*_{\omega,M}$ with~$E^*_{\omega,M}\cap
\{\omega\cdot x\le C_o+\sqrt{n}\}$. To compute the effect of this
cut, let us consider that, at levels~$\{\omega\cdot x\in[ C_o,C_o+n]\}$,
we may have created additional $r$-perimeter adding portions
of~$\{\omega\cdot x=C_o+\sqrt{n}\}$ to the boundary of the set.
Since this portion is flat,
the cut procedure has produced an energy increasing for the
$r$-perimeter of size at most~$C\,{\mathcal{H}}^{n-1}(F_\omega)\,r^{n-1}$.
As for the bulk energy produced by~$g$,
in each cube~$Q$ in~$\{\omega\cdot x=C_o+\sqrt{n}\}$,
we have produced an energy increasing of at most
$$ \|g\|_{L^\infty(\R^n)}\,\min\big\{ {\mathcal{L}}^n(E^*_{\omega,M}\cap Q),\,
{\mathcal{L}}^n(Q\setminus E^*_{\omega,M}) \big\}
\le \eta\, {\mathcal{L}}^n(E^*_{\omega,M}\cap Q)\le \eta r^n,$$
thanks to~\eqref{g int men} and to the fact that~$Q\in {\mathcal{Q}}_{\rm{{a.Wh}}}$.
That is, the total bulk energy
increased at levels~$\{\omega\cdot x\in[ C_o,C_o+n]\}$ is bounded by~$C\,\eta\,
{\mathcal{H}}^{n-1}(F_\omega)\,r^n$.
Summarizing, the modifications of the
cubes in~${\mathcal{Q}}_{\rm{{a.Wh}}}$
at levels~$\{\omega\cdot x\in[ C_o,C_o+n]\}$
produce an energy increasing bounded by
\begin{equation}\label{jiIKJAOJAHFUYHGAKJHIS}
C\,{\mathcal{H}}^{n-1}(F_\omega)\,r^{n-1}+
C\,\eta\,
{\mathcal{H}}^{n-1}(F_\omega)\,r^n\le
C\,{\mathcal{H}}^{n-1}(F_\omega)\,r^{n-1} \big( 1+\eta r\big).
\end{equation}
To prove that the total energy has in fact decreased, we now check that
the cut procedure produces a considerable gain at the other
levels~$\{\omega\cdot x\in[ C_o+n,C_o+W_r]\}$.
For this, notice that
\begin{equation}\label{TOAK}
{\mbox{$\{\omega\cdot x\in[ C_o+n,C_o+W_r]\}$
contains at least~$c\,W_r\,{\mathcal{H}}^{n-1}(F_\omega)$ cubes
of~${\mathcal{Q}}_{\rm{{a.Wh}}}$.}}\end{equation} In each of these cubes,
the cut has produced an energy gain, due to the $r$-perimeter,
and possibly an energy loss due to the bulk energy of~$g$.
{F}rom~\eqref{PER:cube}, we know that the energy gain in each of these cubes
is at least~$c r^{n-1}$, up to renaming~$c>0$. On the other hand,
from~\eqref{g int men} and the fact that the cube belongs to~${\mathcal{Q}}_{\rm{{a.Wh}}}$,
we deduce an upper bound of the bulk energy loss in each cube of the form~$C\eta r^n$,
for some~$C>0$.
Hence, the variation of energy in each of these cubes is of the form~$-c r^{n-1}+C\eta r^n$
(which is negative for small~$\eta$).

Summarizing, and recalling~\eqref{TOAK}, we have that
the cut procedure has produced in~$\{\omega\cdot x\in[ C_o+n,C_o+W_r]\}$
an energy variation bounded from above by
$$ c\,W_r\,{\mathcal{H}}^{n-1}(F_\omega)
(-c r^{n-1}+C\eta r^n)\le c\,W_r\,{\mathcal{H}}^{n-1}(F_\omega)\,r^{n-1}(-1+C\eta r),$$
up to renaming~$c$ and~$C$.
{F}rom this and~\eqref{jiIKJAOJAHFUYHGAKJHIS}, up to renaming constants line after line,
we obtain that
the variation of the energy produced by the cut is in total bounded from above by
\begin{eqnarray*} &&
C\,{\mathcal{H}}^{n-1}(F_\omega)\,r^{n-1} \big( 1+\eta r\big)+
c\,W_r\,{\mathcal{H}}^{n-1}(F_\omega)\,r^{n-1}(-1+C\eta r)\\
&\le&
{\mathcal{H}}^{n-1}(F_\omega)\,r^{n-1}\big( C+C\eta r-c W_r+CW_r \,\eta r\big)
\\ &\le& {\mathcal{H}}^{n-1}(F_\omega)\,r^{n-1}\big( C+CW_r\,\eta r-c W_r\big).
\end{eqnarray*}
Since, by the minimal property of~$E^*_{\omega,M}$, this
energy variation has to be positive, we conclude that
$$ 0\le C+CW_r\,\eta r-c W_r\le C+ W_r\,(C\,\eta-c)$$
and thus, for small~$\eta$, we obtain that~$W_r$ is bounded uniformly,
by a constant independent of~$r$.

This proves~\eqref{STRISCIA3}
for the families of cubes in~${\mathcal{Q}}_{\rm{{a.Wh}}}$
(the cases of the cubes in~${\mathcal{Q}}_{\rm{{a.Bl}}}$ is similar).

{F}rom~\eqref{STRISCIA3}, one can exploit the methods in~\cite{MR1852978}, namely
find
that there exists a uniform~$M_0>0$ such
that if~$M\ge M_0$, then~$
E^*_{\omega,M}=E^*_{\omega,M_0}$ and then, checking that the minimal minimizer
is stable with respect to multiples of the period, establish that it is a Class~A
minimizer, thus completing the proof of Theorem~\ref{PLANELIKE:TH} 
for rational slopes~$\omega$.
\smallskip

{\bf Step 2: planelike minimizers with  irrational slopes}. 
Since the quantity~$M$  
is a universal constant, independent of~$n$, in order  to construct
minimizers with an irrational slope~$\omega\in S^{n-1}$ we 
approximate~$\omega$ with rational frequencies~$\omega_k$, which  produce planelike minimizers~$E^*_{\omega_k}$
and then pass to the limit in~$k$, using  Theorem \ref{promin}, which applies in particular to the Class~A planelike minimizers.
\end{proof} 



\section*{References}

\begin{biblist}[\normalsize]

\bib{MR2197072}{article}{
   author={Auer, F.},
   author={Bangert, V.},
   title={Differentiability of the stable norm in codimension one},
   journal={Amer. J. Math.},
   volume={128},
   date={2006},
   number={1},
   pages={215--238},
   issn={0002-9327},
   review={\MR{2197072}},
}

\bib{MR2728706}{article}{
   author={Barchiesi, M.},
   author={Kang, S. H.},
   author={Le, T. M.},
   author={Morini, M.},
   author={Ponsiglione, M.},
   title={A variational model for infinite perimeter segmentations based on
   Lipschitz level set functions: denoising while keeping finely oscillatory
   boundaries},
   journal={Multiscale Model. Simul.},
   volume={8},
   date={2010},
   number={5},
   pages={1715--1741},
   issn={1540-3459},
   review={\MR{2728706}},
   doi={10.1137/090773659},
}
	
\bib{MR2798533}{book}{
   author={Bauschke, H. H.},
   author={Combettes, P. L.},
   title={Convex analysis and monotone operator theory in Hilbert spaces},
   series={CMS Books in Mathematics/Ouvrages de Math\'ematiques de la SMC},
   note={With a foreword by H\'edy Attouch},
   publisher={Springer, New York},
   date={2011},
   pages={xvi+468},
   isbn={978-1-4419-9466-0},
   review={\MR{2798533}},
   doi={10.1007/978-1-4419-9467-7},
}

\bib{MR1852978}{article}{
   author={Caffarelli, L. A.},
   author={de la Llave, R.},
   title={Planelike minimizers in periodic media},
   journal={Comm. Pure Appl. Math.},
   volume={54},
   date={2001},
   number={12},
   pages={1403--1441},
   issn={0010-3640},
   review={\MR{1852978}},
   doi={10.1002/cpa.10008},
}

\bib{MR2123651}{article}{
   author={Caffarelli, L. A.},
   author={de la Llave, R.},
   title={Interfaces of ground states in Ising models with periodic
   coefficients},
   journal={J. Stat. Phys.},
   volume={118},
   date={2005},
   number={3-4},
   pages={687--719},
   issn={0022-4715},
   review={\MR{2123651}},
   doi={10.1007/s10955-004-8825-1},
}

\bib{MR2675483}{article}{
   author={Caffarelli, L. A.},
   author={Roquejoffre, J.-M.},
   author={Savin, O.},
   title={Nonlocal minimal surfaces},
   journal={Comm. Pure Appl. Math.},
   volume={63},
   date={2010},
   number={9},
   pages={1111--1144},
   issn={0010-3640},
   review={\MR{2675483}},
   doi={10.1002/cpa.20331},
}

\bib{cn}{article}{
    AUTHOR = {Cesaroni, A.}
    author={Novaga, M.},
     TITLE = {Isoperimetric problems for a nonlocal perimeter of Minkowski
              type},
   JOURNAL = {Geom. Flows},
    VOLUME = {2},
      YEAR = {2017},
     PAGES = {86--93},
      ISSN = {2353-3382},
   review={\MR{3733869}},
       doi = {10.1515/geofl-2017-0003},
}

\bib{MR2655948}{article}{
   author={Chambolle, A.},
   author={Giacomini, A.},
   author={Lussardi, L.},
   title={Continuous limits of discrete perimeters},
   journal={M2AN Math. Model. Numer. Anal.},
   volume={44},
   date={2010},
   number={2},
   pages={207--230},
   issn={0764-583X},
   review={\MR{2655948}},
   doi={10.1051/m2an/2009044},
}

\bib{MR3187918}{article}{
   author={Chambolle, A.},
   author={Lisini, S.},
   author={Lussardi, L.},
   title={A remark on the anisotropic outer Minkowski content},
   journal={Adv. Calc. Var.},
   volume={7},
   date={2014},
   number={2},
   pages={241--266},
   issn={1864-8258},
   review={\MR{3187918}},
   doi={10.1515/acv-2013-0103},
}

\bib{MR3023439}{article}{
   author={Chambolle, A.},
   author={Morini, M.},
   author={Ponsiglione, M.},
   title={A nonlocal mean curvature flow and its semi-implicit time-discrete
   approximation},
   journal={SIAM J. Math. Anal.},
   volume={44},
   date={2012},
   number={6},
   pages={4048--4077},
   issn={0036-1410},
   review={\MR{3023439}},
   doi={10.1137/120863587},
}

\bib{MR3401008}{article}{
  author={Chambolle, A.},
   author={Morini, M.},
   author={Ponsiglione, M.},
   title={Nonlocal curvature flows},
   journal={Arch. Ration. Mech. Anal.},
   volume={218},
   date={2015},
   number={3},
   pages={1263--1329},
   issn={0003-9527},
   review={\MR{3401008}},
   doi={10.1007/s00205-015-0880-z},
}

\bib{ct}{article}{
    AUTHOR = {Chambolle, A.},
    author={Thouroude, G.},
     TITLE = {Homogenization of interfacial energies and construction of
              plane-like minimizers in periodic media through a cell
              problem},
   JOURNAL = {Netw. Heterog. Media},
    VOLUME = {4},
      YEAR = {2009},
    NUMBER = {1},
     PAGES = {127--152},
      ISSN = {1556-1801},
      review={\MR{2480426}}
       URL = {https://doi.org/10.3934/nhm.2009.4.127},
}

\bib{MR2520779}{article}{
   author={Chung, G.},
   author={Vese, L. A.},
   title={Image segmentation using a multilayer level-set approach},
   journal={Comput. Vis. Sci.},
   volume={12},
   date={2009},
   number={6},
   pages={267--285},
   issn={1432-9360},
   review={\MR{2520779}},
   doi={10.1007/s00791-008-0113-1},
}

\bib{2016arXiv160503794C}{article}{
   author={Cozzi, M.},
   author={Dipierro, S.},
   author={Valdinoci, E.},
   title={Nonlocal phase transitions in homogeneous and periodic media},
   journal={J. Fixed Point Theory Appl.},
   volume={19},
   date={2017},
   number={1},
   pages={387--405},
   issn={1661-7738},
   review={\MR{3625078}},
   doi={10.1007/s11784-016-0359-z},
}

\bib{COZZI}{article}{
  author={Cozzi, M.},
   author={Dipierro, S.},
   author={Valdinoci, E.},
   title={Planelike interfaces in long-range Ising models and connections
   with nonlocal minimal surfaces},
   journal={J. Stat. Phys.},
   volume={167},
   date={2017},
   number={6},
   pages={1401--1451},
   issn={0022-4715},
   review={\MR{3652519}},
}

\bib{COZZI2}{article}{
   author={Cozzi, M.},
   author={Valdinoci, E.},
   title={Plane-like minimizers for a non-local Ginzburg-Landau-type energy
   in a periodic medium},
   language={English, with English and French summaries},
   journal={J. \'Ec. polytech. Math.},
   volume={4},
   date={2017},
   pages={337--388},
   issn={2429-7100},
   review={\MR{3623357}},
}

\bib{dnv}{article}{
   author={Dipierro, S.},
      author={Novaga, M.},
   author={Valdinoci, E.},
   title={On a Minkowski geometric flow in the plane},
   journal={arxiv preprint 2017, https://arxiv.org/abs/1710.05236},
}

\bib{GP}{article}{
author={Dipierro, S.},
author={Valdinoci, E.},
title={Nonlocal Minimal Surfaces: Interior Regularity,
Quantitative Estimates and
Boundary Stickiness},
   journal={Recent Developments in the Nonlocal Theory. 
Book Series on Measure Theory.
De Gruyter, Berlin},
date={2017},
}

\bib{federer}{book}{
   author={Federer, H.},
   title={Geometric measure theory},
   series={Die Grundlehren der mathematischen Wissenschaften, Band 153},
   publisher={Springer-Verlag New York Inc., New York},
   date={1969},
   pages={xiv+676},
   review={\MR{0257325}},
}

\bib{MR1898210}{article}{
   author={Gardner, R. J.},
   title={The Brunn-Minkowski inequality},
   journal={Bull. Amer. Math. Soc. (N.S.)},
   volume={39},
   date={2002},
   number={3},
   pages={355--405},
   issn={0273-0979},
   review={\MR{1898210}},
   doi={10.1090/S0273-0979-02-00941-2},
}

\bib{giusti}{book}{
   author={Giusti E.},
   title={Minimal Surfaces and Functions of Bounded Variation},
   series={Monographs in Mathematics},
   publisher={Birkh\"auser},
   date={1984},
   pages={vii+239},
   isbn={978-1-4684-9486-0},
}
\bib{MR1503086}{article}{
   author={Hedlund, G. A.},
   title={Geodesics on a two-dimensional Riemannian manifold with periodic
   coefficients},
   journal={Ann. of Math. (2)},
   volume={33},
   date={1932},
   number={4},
   pages={719--739},
   issn={0003-486X},
   review={\MR{1503086}},
   doi={10.2307/1968215},
}

\bib{MR3544940}{article}{
    author = {Kraft, D.},
     title = {Measure-theoretic properties of level sets of distance
              functions},
   journal = {J. Geom. Anal.},
    volume = {26},
      year = {2016},
    number= {4},
   pages = {2777--2796},
issn = {1050-6926},
review={\MR{3544940}},
      doi = {10.1007/s12220-015-9648-9}
}

\bib{MR2145612}{book}{
   author={Kurdila, A. J.},
   author={Zabarankin, M.},
   title={Convex functional analysis},
   series={Systems \& Control: Foundations \& Applications},
   publisher={Birkh\"auser Verlag, Basel},
   date={2005},
   pages={xiv+228},
   isbn={978-3-7643-2198-7},
   isbn={3-7643-2198-9},
   review={\MR{2145612}},
}

\bib{MR1501263}{article}{
   author={Morse, H. M.},
   title={A fundamental class of geodesics on any closed surface of genus
   greater than one},
   journal={Trans. Amer. Math. Soc.},
   volume={26},
   date={1924},
   number={1},
   pages={25--60},
   issn={0002-9947},
   review={\MR{1501263}},
   doi={10.2307/1989225},
}

\bib{MR847308}{article}{
   author={Moser, J.},
   title={Minimal solutions of variational problems on a torus},
   journal={Ann. Inst. H. Poincar\'e Anal. Non Lin\'eaire},
   volume={3},
   date={1986},
   number={3},
   pages={229--272},
   issn={0294-1449},
   review={\MR{847308}},
}

\bib{MR1930621}{article}{
   author={Novaga, M.},
   author={Paolini, E.},
   title={Regularity results for some 1-homogeneous functionals},
   journal={Nonlinear Anal. Real World Appl.},
   volume={3},
   date={2002},
   number={4},
   pages={555--566},
   issn={1468-1218},
   review={\MR{1930621}},
   doi={10.1016/S1468-1218(01)00048-7},
}

\bib{MR2777010}{article}{
   author={Novaga, M.},
   author={Valdinoci, E.},
   title={Closed curves of prescribed curvature and a pinning effect},
   journal={Netw. Heterog. Media},
   volume={6},
   date={2011},
   number={1},
   pages={77--88},
   issn={1556-1801},
   review={\MR{2777010}},
   doi={10.3934/nhm.2011.6.77},
}

\bib{MR2745195}{article}{
      author={Novaga, M.},
   author={Valdinoci, E.},
   title={Bump solutions for the mesoscopic Allen-Cahn equation in periodic
   media},
   journal={Calc. Var. Partial Differential Equations},
   volume={40},
   date={2011},
   number={1-2},
   pages={37--49},
   issn={0944-2669},
   review={\MR{2745195}},
   doi={10.1007/s00526-010-0332-4},
}

\bib{MR845785}{book}{
   author={Rabinowitz, P. H.},
   title={Minimax methods in critical point theory with applications to
   differential equations},
   series={CBMS Regional Conference Series in Mathematics},
   volume={65},
   publisher={Published for the Conference Board of the Mathematical
   Sciences, Washington, DC; by the American Mathematical Society,
   Providence, RI},
   date={1986},
   pages={viii+100},
   isbn={0-8218-0715-3},
   review={\MR{845785}},
   doi={10.1090/cbms/065},
}

\bib{MR3155183}{book}{
   author={Schneider, R.},
   title={Convex bodies: the Brunn-Minkowski theory},
   series={Encyclopedia of Mathematics and its Applications},
   volume={151},
   edition={Second expanded edition},
   publisher={Cambridge University Press, Cambridge},
   date={2014},
   pages={xxii+736},
   isbn={978-1-107-60101-7},
   review={\MR{3155183}},
}

\bib{MR2099113}{article}{
   author={Valdinoci, E.},
   title={Plane-like minimizers in periodic media: jet flows and
   Ginzburg-Landau-type functionals},
   journal={J. Reine Angew. Math.},
   volume={574},
   date={2004},
   pages={147--185},
   issn={0075-4102},
   review={\MR{2099113}},
   doi={10.1515/crll.2004.068},
}

\bib{MR3046979}{article}{
   author={Valdinoci, E.},
     TITLE = {A fractional framework for perimeters and phase transitions},
   JOURNAL = {Milan J. Math.},
  FJOURNAL = {Milan Journal of Mathematics},
    VOLUME = {81},
      YEAR = {2013},
    NUMBER = {1},
     PAGES = {1--23},
      ISSN = {1424-9286},
   MRCLASS = {49Q05},
  MRNUMBER = {3046979},
       DOI = {10.1007/s00032-013-0199-x},
       URL = {http://dx.doi.org/10.1007/s00032-013-0199-x},
}

\bib{MR3381284}{book}{
   author={Wheeden, R. L.},
   author={Zygmund, A.},
   title={Measure and integral},
   series={Pure and Applied Mathematics (Boca Raton)},
   edition={2},
   note={An introduction to real analysis},
   publisher={CRC Press, Boca Raton, FL},
   date={2015},
   pages={xvii+514},
   isbn={978-1-4987-0289-8},
   review={\MR{3381284}},
}

\end{biblist}
\end{document}